\documentclass[11pt]{amsart}
\usepackage[margin=1in]{geometry}

\usepackage{amssymb}
\usepackage{amsthm}
\usepackage{amsmath}
\usepackage{mathrsfs}
\usepackage{amsbsy}
\usepackage[all]{xy}
\usepackage{bm}
\usepackage{hyperref}
\usepackage{tikz}
\usetikzlibrary{quotes}
\usepackage{array}
\usepackage{enumerate}
\usepackage{xcolor}
\usepackage{bbm}
\usepackage{comment}
\usepackage{dynkin-diagrams}
\usepackage{ifthen}
\usepackage{thmtools, thm-restate}
\graphicspath{{./}{figures/}}

\usepackage[noabbrev,capitalise,nameinlink]{cleveref}

\definecolor{lavender}{rgb}{0.4,0,1.0}
\definecolor{peach}{rgb}{1,0.43,0.39}
\definecolor{DarkPink}{rgb}{0.9,0,0.45} 
\definecolor{Green}{RGB}{0,204,0} 

\hypersetup{colorlinks=true, citecolor=DarkPink, linkcolor=DarkPink,urlcolor=DarkPink}

\usepackage{hhline}
\setlength{\parskip}{0em}
\allowdisplaybreaks
\usepackage[noadjust]{cite}
\usepackage{asymptote}

\usepackage{caption}
\usepackage{tabu}
\usepackage{diagbox}
\usepackage[noabbrev,capitalise,nameinlink]{cleveref}
\crefname{conjecture}{Conjecture}{Conjectures}

\newtheorem{mainthm}{Theorem}

\newtheorem{theorem}{Theorem}[section]
\newtheorem{proposition}[theorem]{Proposition}
\newtheorem{corollary}[theorem]{Corollary}

\newtheorem{lemma}[theorem]{Lemma}

\theoremstyle{definition}
\newtheorem{definition}[theorem]{Definition}
\newtheorem{remark}[theorem]{Remark}

\newcommand{\Inv}{\mathrm{Inv}} 
 
\newcommand{\Cay}{\mathrm{Cay}}

\newcommand{\snake}{\mathrm{snake}}

\newcommand{\bx}{\mathbf{x}}
\newcommand{\bu}{\mathbf{u}}
\newcommand{\bv}{\mathbf{v}}
\newcommand{\wh}{\widehat}

\newcommand{\height}[1]{\mathrm{height}(#1)}
\newcommand{\id}{\mathrm{id}} 
\newcommand{\yy}{\mathtt{src}}
\newcommand{\zz}{\texttt{tar}}
\newcommand{\qform}{Q}

\renewcommand{\AA}{\mathbb{A}}

\newcommand{\DD}{\mathbb{D}}

\newcommand{\RR}{\mathbb{R}}

\newcommand{\ZZ}{\mathbb{Z}}
\newcommand{\Span}{\mathrm{Span}}
\newcommand{\spange}{\Span_{\geq 0}}
\newcommand{\conv}{\mathrm{Conv}}

\newcommand{\cA}{\mathcal{A}}

\newcommand{\cH}{\mathcal{H}}

\newcommand{\cL}{\mathcal{L}}

\newcommand{\cQ}{\mathcal{Q}}

\newcommand{\cS}{\mathcal{S}}
\newcommand{\cT}{\mathcal{T}}
\newcommand{\cU}{\mathcal{U}}

\newcommand{\cZ}{\mathcal{Z}}

\newcommand{\dfn}[1]{\textcolor{lavender}{\emph{#1}}}

\begin{document}

\title[Extended Weak Order for $U_3$]{Extended Weak Order for the
  Rank~$3$\\ Universal Coxeter Group}
\subjclass[2020]{}

\author[]{Grant Barkley} \address[GB]{Department of Mathematics,
  University of Michigan, Ann Arbor, MI 48109, USA}
\email{gbarkley@umich.edu}

\author[]{Colin Defant} \address[CD]{Department of Mathematics,
  Harvard University, Cambridge, MA 02138, USA}
\email{colindefant@gmail.com}

\author[]{Patricia Hersh} \address[PH]{Department of Mathematics,
  University of Oregon, Eugene, OR 97403, USA}
\email{plhersh@uoregon.edu}

\author[]{Jon McCammond} \address[JM]{Department of Mathematics,
  University of California, Santa Barbara, CA 93106, USA}
\email{jon.mccammond@math.ucsb.edu}

\author[]{Thomas McConville} \address[TM]{Department of Mathematics,
  Kennesaw State University, Marietta, GA 30060, USA}
\email{tmcconvi@kennesaw.edu}

\author[]{David E Speyer} \address[DES]{Department of Mathematics,
  University of Michigan, Ann Arbor, MI 48109, USA}
\email{speyer@umich.edu} 

\thanks{GB was supported by NSF grants DMS-2152991 and
  DMS-2503536.  CD was supported by NSF grant DMS-2201907 and by a Benjamin Peirce Fellowship at Harvard University. PH was supported by Simons Foundation travel funding for mathematicians. JM was supported by NSF grant DMS-2204001.  DES was supported by NSF grant DMS-2246570.}

\begin{abstract} 
  The weak order is a classical poset structure on a Coxeter group; it is a lattice when the group is
  finite but merely a meet-semilattice when the group is infinite.
  Motivated by problems in Kazhdan--Lusztig theory, Matthew Dyer
  introduced the \emph{extended weak order}, a poset that
  contains a copy of the weak order as an order ideal, and he
  conjectured that the extended weak order for any Coxeter group is a
  lattice.  We prove Dyer's conjecture for the rank~$3$ universal
  Coxeter group.  This is the first non-spherical, non-affine Coxeter
  group for which Dyer's conjecture has been proven.
\end{abstract}

\maketitle

\section*{Introduction}\label{sec:intro}

The \emph{weak order} is a classical partial order on a Coxeter group $W$. A seminal result due to Bj\"orner \cite{BjornerLattice} states that the weak order on $W$ is a meet-semilattice. When $W$ is finite/spherical, the weak order is a
complemented lattice. In this case, the Hasse diagram of the weak order can be viewed as an orientation of the $1$-skeleton of the polytope known as the \emph{$W$-permutohedron}, whose face lattice is related to the lattice structure of the weak order.  When $W$ is infinite, the weak order is not a lattice. 

Let $\Phi^+$ denote the set of positive roots in the root system of $W$. A set $R\subseteq\Phi^+$ is \dfn{closed} if for all $\alpha,\beta\in
  R$, every root in the nonnegative span of $\alpha$ and $\beta$ is also in $R$. We say $R$ is \dfn{biclosed} if $R$ and $\Phi^+\setminus R$ are both closed. Motivated by problems in
Kazhdan--Lusztig theory, Dyer defined the \dfn{extended weak order} for $W$ to be the collection of biclosed subsets of $\Phi^+$, partially ordered by containment. This poset contains an order ideal that is isomorphic to the weak order. 

One of the major open conjectures about biclosed sets is Dyer's conjecture \cite[Conjecture~2.5(i)]{dyer:2019weak} that the extended weak order is a complete lattice, which is a vast strengthening of Bj\"orner's result. Dyer first raised this question in \cite[Remark~2.14]{dyer:1994quotients}; see also the discussion in \cite[Section~2.4]{dyer:2019weak}. Edgar also records the conjecture in \cite[Conjecture~9.2.4]{EdgarThesis}. For finite/spherical Coxeter groups, the conjecture is already known to hold because the weak order and extended weak order are isomorphic.  Wang \cite{wang:2019infinite} proved Dyer's conjecture for affine Coxeter groups of rank $3$, and Barkley and Speyer \cite{barkley:2023affine} proved it for all affine Coxeter groups.  In this article, we prove Dyer's conjecture for the universal Coxeter group of rank~$3$, which we
denote by $U_3$.  This is the first non-spherical, non-affine Coxeter group for which
Dyer's conjecture has been established. In fact, we prove a slightly stronger result. 

\begin{mainthm}\label{thm:main} 
  Every biclosed set of positive roots for $U_3$ is weakly
  separable.  As a consequence, the extended weak order for $U_3$ is a
  lattice.
\end{mainthm}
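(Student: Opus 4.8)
The plan is to prove the weak separability statement first and then read off the lattice property from it, since the second assertion is a formal consequence of the first. I would begin by fixing the geometric realization: $U_3 = \langle s_1,s_2,s_3\rangle$ acts on $\RR^{2,1}$ through its geometric representation, whose bilinear form has signature $(2,1)$. Projectivizing, the Klein disk model of the hyperbolic plane $\hyp$ appears, the positive roots become a discrete family of spacelike directions lying outside the disk (each polar to a geodesic mirror), and their projective accumulation is a limit set $\Lambda\subseteq\partial\hyp$ determined by the free-product structure of $U_3$. The first task is to make precise the dictionary in which the closure condition ``every root of $\spange(\alpha,\beta)$ lies in $R$'' becomes a convexity/betweenness condition on directions, so that biclosed sets correspond to cuts of the root configuration compatible with this convex geometry. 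I would simultaneously record the self-similarity coming from the free product: deleting the mirrors meeting a given generator's fixed geodesic exhibits the remaining roots as translated copies of smaller $U_3$-type configurations clustering toward $\Lambda$, and this is the structure that will drive a geometric descent.

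The heart of the argument is showing that every biclosed set $B$ is weakly separable; concretely this means producing a linear functional $\ell$ realizing $B$ as the positive side of $\ell$, with the roots on $\ker\ell$ handled recursively by a lower-dimensional separation. I would organize this by the position of $B$ relative to $\Lambda$. When $B$ is finite or cofinite it is (the complement of) an inversion set of a genuine element of $U_3$, hence separated by a point placed deep inside a chamber, giving the base case. The substantive regime is when $B$ is both infinite and coinfinite, so that the ``boundary'' of $B$ touches $\Lambda$: here I would read off from the accumulation of $B$ on $\partial\hyp$ a boundary point (or pair of arc endpoints) $\xi$, take $\ell$ to pair against $\xi$, and then confine attention to the roots on which $\ell$ nearly vanishes. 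These sit in a horoball-like neighborhood of $\xi$ and, by the self-similarity above, form a biclosed set inside a conjugate standard subgroup again of universal type, on which weak separability is applied recursively to supply the tie-break. The main obstacle is exactly the termination and consistency of this recursion: because the free product produces self-similar copies of $U_3$ itself near $\Lambda$ rather than a strict rank drop, one must show that the roots clustering at the limit set decompose into finitely many pieces on which the descent genuinely makes progress, and that biclosedness of $B$ forces a single coherent choice of separating side at each stage. This is where the rank-$3$, infinite-dihedral-free geometry is essential and where a naive attempt for a general rank-$3$ group would fail.

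Finally I would deduce the lattice property from weak separability by the standard mechanism. The candidate join of $B_1$ and $B_2$ is the closure of $B_1\cup B_2$; weak separability provides a separating functional for this closure, built by combining the functionals of $B_1$ and $B_2$ and refining on their common kernel, and this certifies both that the closure is again biclosed and that it is the least upper bound. Meets then come for free from the order-reversing involution $B\mapsto\Phi^+\setminus B$, which preserves biclosedness (the defining condition is symmetric in $B$ and its complement) and therefore converts joins into meets. I expect the only friction at this last stage to be the bookkeeping of checking that the combined functional separates the closure of $B_1\cup B_2$ and no strictly smaller biclosed set, so that essentially all the weight of the theorem rests on the weak separability statement established in the previous step.
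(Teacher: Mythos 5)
Your setup (Klein model, roots as spacelike directions outside the disk, finite/cofinite sets handled as inversion sets, a separating hyperplane determined by one or two boundary points) matches the paper's framework, but the two steps that carry all the weight of the theorem are missing. First, you assert that when $R$ and $B=\Phi^+\setminus R$ are both infinite one can ``read off from the accumulation of $B$ on $\partial\hyp$ a boundary point (or pair of arc endpoints) $\xi$.'' This presupposes exactly the hard fact: that the boundary circle splits into one red arc and one blue arc, i.e.\ that there are at most two transition points between where $R$ accumulates and where $B$ accumulates. A priori a biclosed set could have red and blue roots interleaving near many boundary points (indeed, every point of $\partial\DD$ is an accumulation point of rescaled roots). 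The paper proves this dichotomy combinatorially, by constructing the pair of snakes: each face of the Cayley graph has at most two color changes (this is where the rank-$2$ biclosedness of $R$ in each parabolic subsystem enters), the snake segments concatenate into two curves from $v_\id$, and the resulting $1$-manifold separates the disk into two monochromatic regions whose boundary arcs give the points $p_1,p_2$. Nothing in your sketch substitutes for this construction.

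Second, even granting the two limit points, you still must show the hyperplane through them weakly separates $R$ from $B$, and here your proposed mechanism --- a ``geometric descent'' through self-similar copies near the limit set --- is precisely the step you yourself flag as unresolved, and as stated it cannot work: the free-product self-similarity reproduces conjugates of $U_3$ (or rank-$2$ infinite dihedral parabolics), so there is no strict rank drop and no well-founded recursion. The paper needs no such recursion. Its separation lemmas argue directly: for distinct limit points, given a root $\beta$ off $H_{\mathrm{snake}}$, one produces two auxiliary roots $\gamma,\gamma'$ of larger height near boundary points of known color, lying on a common line with $\wh\beta$, and then full biclosedness of $R$ (not just parabolic biclosedness) forces $\beta$ to inherit the color of $\gamma$ or $\gamma'$, hence to lie on the correct side; the case of a single limit point is handled by analyzing which hyperplanes the snakes cross. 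Your proposal never identifies where the global (rank-$3$) closure condition is used, and the height-minimality/boundary-density argument that replaces your recursion is absent. The final deduction of the lattice property from weak separability is essentially correct in outline (it is the paper's route, via the known fact that weakly separable sets in rank $3$ form a lattice under convex closure), but that part is cheap; the gap is in the separability proof itself.
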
 

Our techniques used to prove \cref{thm:main} are motivated by geometric group theory and are very different from those used to study the extended weak order in the past. We believe they will be useful for proving further cases of Dyer's conjecture in the future. A brief outline is as follows. We introduce the notion of a \emph{parabolic biclosed} subset of $\Phi^+$, which is a subset of $\Phi^+$ whose intersection with each rank 2 parabolic subsystem is biclosed in the set of positive roots of that subsystem. There is a natural bijection between $\Phi^+$ and the set of edges of the Cayley graph $\Gamma$ of $U_3$, which embeds in the hyperbolic plane. Given a bipartition $R\sqcup B$ of $\Phi^+$ into parabolic biclosed sets $R$ and $B$, we color the edges of $\Gamma$ corresponding to $B$ blue and color the edges corresponding to $R$ red. We prove that the red and blue edges can be separated by a connected topological $1$-manifold called a \emph{pair of snakes}, which consists of two \emph{snakes} slithering through the Cayley graph. The pair of snakes is homeomorphic to a circle when $R$ or $B$ is finite, and it is homeomorphic to  a line otherwise (see \cref{fig:finite}). We then prove
that when $R$ and $B$ are infinite and biclosed, the two heads of the snake define a hyperplane that weakly separates $R$ from $B$, establishing \cref{thm:main}.

\begin{figure}
  \includegraphics[height=7.3cm]{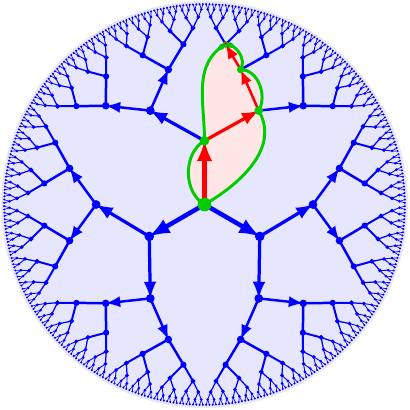}\quad\includegraphics[height=7.3cm]{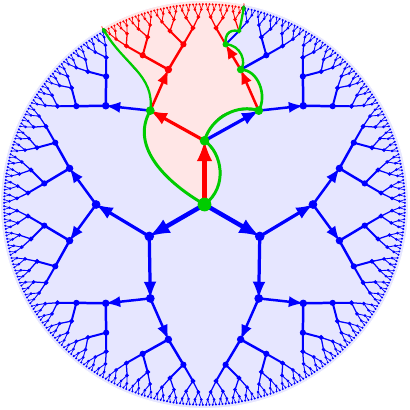}
  \caption{In each figure, a {\color{Green}green} pair of snakes separates the {\color{red} red} edges of $\Gamma$ corresponding to a parabolic biclosed set $R\subseteq\Phi^+$ from the {\color{blue}blue} edges corresponding to the set $B=\Phi^+\setminus R$. On the left, $R$ is finite. On the right, $R$ and $B$ are both infinite. 
    \label{fig:finite}} 
\end{figure}

Our second main result, which concerns the more refined lattice structure of the extended weak order, settles another conjecture due to Dyer for the universal Coxeter group of rank $3$ (see \cite[Conjecture~2.5]{dyer:2019weak}). 

\begin{mainthm}\label{thm:main2} 
  Let $\{R_i\}_{i\in I}$ be a collection of biclosed sets of positive roots for $U_3$. The join of $\{R_i\}_{i\in I}$ in the extended weak order is the smallest closed subset of $\Phi^+$ that contains $\bigcup_{i\in I}R_i$.  
\end{mainthm}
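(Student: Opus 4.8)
The plan is to identify the join $J := \bigvee_{i\in I} R_i$, which exists because the extended weak order is a lattice by \cref{thm:main}, with the set $C$ defined as the smallest closed subset of $\Phi^+$ containing $\bigcup_{i\in I} R_i$ (this exists since an intersection of closed sets is closed). One inclusion is immediate: as $J$ is biclosed it is in particular closed, and it contains every $R_i$ and hence $\bigcup_i R_i$, so minimality of $C$ gives $C\subseteq J$. The entire theorem therefore reduces to the reverse inclusion $J\subseteq C$, and for this it suffices to prove that $C$ is itself biclosed: once $C$ is known to be biclosed it is a biclosed upper bound for $\{R_i\}_{i\in I}$, so the least upper bound satisfies $J\subseteq C$, whence $J=C$. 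Thus \cref{thm:main2} is equivalent to the single assertion that the smallest closed set containing a union of biclosed sets is again biclosed.

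To prove $C$ is biclosed I must show that $\Phi^+\setminus C$ is closed. First I would reduce this to a rank~$2$ statement. Since the nonnegative span of two roots lies in the rank~$2$ subsystem they generate, a subset of $\Phi^+$ is closed exactly when its restriction to $\Psi^+$ is closed for every rank~$2$ dihedral reflection subsystem $\Psi$; consequently $C$ is biclosed if and only if $C\cap\Psi^+$ is biclosed in $\Psi^+$ for every such $\Psi$. Every rank~$2$ subsystem of $U_3$ is infinite dihedral, so its positive roots are linearly ordered by angle, its closed subsets are precisely the order-convex subsets, and its biclosed subsets are precisely the initial segments and final segments of this order. Hence the goal becomes purely local: for each rank~$2$ subsystem $\Psi$, the set $C\cap\Psi^+$ is an initial or final segment of $\Psi^+$.

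Because each $R_i$ is biclosed, each restriction $R_i\cap\Psi^+$ is biclosed in $\Psi^+$, i.e.\ an initial or final segment, and a routine order-theoretic check shows that the smallest closed subset of $\Psi^+$ containing $\bigcup_i\bigl(R_i\cap\Psi^+\bigr)$ is again an initial segment, a final segment, all of $\Psi^+$, or empty, and so is biclosed in $\Psi^+$. The inclusion of this $\Psi$-internal closure inside $C\cap\Psi^+$ is clear. The crux of the argument, and the step I expect to be the main obstacle, is the reverse inclusion: I must rule out \emph{leakage}, namely the possibility that the global closure process building $C$ inserts a root of $\Psi^+$ lying strictly between the initial and final parts, turning $C\cap\Psi^+$ into a ``middle interval'' and destroying biclosedness. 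This step is genuinely global, since a root of $\Psi^+$ may be produced by taking the nonnegative span of two roots that themselves need not lie in $\Psi$.

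To control leakage I would deploy the geometry developed for \cref{thm:main}: the weak separability of each $R_i$ together with the embedding of the Cayley graph $\Gamma$ of $U_3$ in the hyperbolic plane $\HH$, in which the rank~$2$ parabolic subsystems appear as walls and the separating pairs of snakes record the boundary data of each $R_i$. Every step of the closure adjoins only roots inside the two-dimensional cone spanned by two roots already present; the idea is to use the separation data of the $R_i$ to show that no such cone can contain a root of $\Psi^+$ interior to the gap of $C\cap\Psi^+$ unless one of its two spanning roots already lies beyond that gap. Tracking this through the closure process, ideally by a confluence argument that localizes each step to its rank~$2$ cone, would show that $C\cap\Psi^+$ coincides with the $\Psi$-internal closure of $\bigcup_i\bigl(R_i\cap\Psi^+\bigr)$ and is therefore an initial or final segment, hence biclosed in $\Psi^+$. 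Establishing this no-leakage lemma within the hyperbolic geometry of $U_3$ is where the real work lies; the surrounding reductions are formal.
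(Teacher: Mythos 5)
Your opening reductions are correct: the theorem is equivalent to showing that the $2$-closure $C$ of $\bigcup_{i\in I}R_i$ is biclosed, and biclosedness of $C$ can be tested on all rank~$2$ linear subsystems, where the biclosed sets are exactly the initial and final segments of the linear order on positive roots. The gap is in the key lemma you then set up. The ``no-leakage'' statement you propose to prove --- that $C\cap\Psi^+$ equals the $\Psi$-internal closure of $\bigcup_i\bigl(R_i\cap\Psi^+\bigr)$ for every rank~$2$ subsystem $\Psi$ --- is false. Take $R_1=\Inv(s_1)=\{\alpha_1\}$ and $R_2=\Inv(s_2)=\{\alpha_2\}$, so that $C$ is the standard parabolic positive system $\Phi_{12}^+$. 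Let $\beta=s_1\cdot\alpha_2=2\alpha_1+\alpha_2$ and $\Psi=\Span\{\beta,\alpha_3\}\cap\Phi$. The planes $\Span\{\beta,\alpha_3\}$ and $\Span\{\alpha_1,\alpha_2\}$ meet only in the line $\RR\beta$, so $\Psi$ contains neither $\alpha_1$ nor $\alpha_2$; hence $\bigcup_i\bigl(R_i\cap\Psi^+\bigr)=\varnothing$ and its internal closure is empty, while $\beta\in C\cap\Psi^+$. So the global closure genuinely does insert roots into subsystems that meet no $R_i$ at all; the most you can hope to prove is the weaker statement that $C\cap\Psi^+$ is always an initial or final segment. (In this example $C\cap\Psi^+=\{\beta\}$ is indeed a segment, but only because $\beta$ happens to be relatively simple in $\Psi$ --- exactly the kind of fact your method would have to establish and does not.)

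Second, even that corrected statement is not proved in your proposal: your final paragraph is a plan (``use the separation data of the $R_i$,'' ``a confluence argument''), not an argument, and you acknowledge that this is where the real work lies --- but that work is the entire content of the theorem. For comparison, the paper takes a different and concrete route: since \cref{thm:main} yields \cref{prop:convexjoin} (the join is the convex closure), it suffices to show the convex closure is contained in the $2$-closure. This is done by choosing a root $\gamma$ of minimal height in the difference, placing $\wh\gamma$ in a triangle via Carath\'eodory's theorem (\cref{lem:convextriangles}), and running a case analysis on lines through $\wh\gamma$ and the rank~$2$ subsystems containing $\gamma$ (\cref{lem:2closure_join}), after first reducing to the case where all $R_i$ share a simple root using \cref{cor:bicsimple}, \cref{lem:closeDelta}, and joins with $\Phi_{12}^+$. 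Some such quantitative mechanism (an extremal statistic like height, combined with the geometry of \cref{lem:rank2geom}) is what your sketch lacks; as it stands, the proposal is a correct reformulation of the theorem rather than a proof of it.
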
 

The structure of the article is as follows.  The first three sections are largely preparatory. First, 
\cref{sec:hyperbolic} fixes the notation regarding hyperbolic geometry that will be used throughout the paper. Second, \cref{sec:universal} specializes the standard reflection representation, the Farey tessellation, and the dual Cayley graph to the rank $3$ universal Coxeter group $U_3$. Third, \cref{sec:invert-convex} recalls standard notions related to weak order and extended weak order, including inversion sets, biclosed sets, separable and weakly separable sets, normalized roots, and limit roots. The only new notion introduced in these preparatory sections is that of a \emph{parabolic biclosed} set in \cref{sec:invert-convex}.  
The new arguments begin in \cref{sec:parabolic}, where we introduce snakes and prove that the pair of snakes associated to a parabolic biclosed set $R$ is a topological $1$-manifold separating the edges corresponding to $R$ from the edges corresponding to $\Phi^+\setminus R$.  
In \cref{sec:biclosed}, we use the stronger hypothesis that $R$ is biclosed, rather than merely parabolic biclosed, to show that the two heads of the snake determine a hyperplane that weakly separates $R$ from $\Phi^+\setminus R$, thereby proving \cref{thm:main}.  
Then \cref{sec:When?} explains, in terms of jagged snakes, when a parabolic biclosed set is globally biclosed.  
\cref{sec:structure} describes more detailed lattice-theoretic properties of the extended weak order for $U_3$.  
Finally, \cref{sec:closure} proves \cref{thm:main2}, identifying arbitrary joins with the $2$-closure of the union.

\section{Hyperbolic geometry}\label{sec:hyperbolic}

This section reviews basic facts about hyperbolic geometry. 
For additional background see standard references such as \cite{Anderson05,CFP97,Humphreys}.
Our presentation is particularly influenced by \cite{DyerHohlwegRipoll} and \cite{bjorner:2005Coxbook}.
Let $\Delta=\{\alpha_1,\alpha_2,\alpha_3\}$ be a basis of $V=\RR^3$.
Let $\langle -,-\rangle$ be the bilinear form such that 
\[ 
\langle \alpha_i,\alpha_j\rangle = 
\begin{cases}
    1\ &\mbox{if }i=j\\
    -1\ &\mbox{if }i\neq j
\end{cases}.
\]
Then $\langle -,-\rangle$ is a nondegenerate symmetric bilinear form on $V$ of signature $(2,1)$.
The \dfn{indefinite orthogonal group} $O(2,1)$ is the group of linear transformations of $V$ that preserve the bilinear form.
For $\bx\in V$, there are unique real numbers $x,y,z$ with $\bx=x\alpha_1+y\alpha_2+z\alpha_3$.
We use the coordinates $(x,y,z)$ to represent $\bx$.

The quadratic form $\qform:V\rightarrow\RR$ is defined by the equation 
\[ 
\qform(\bx)= \langle \bx,\bx\rangle = x^2+y^2+z^2-2(xy+xz+yz). 
\]
The \dfn{norm} of $\bx$ is $\qform(\bx)$.

Since $O(2,1)$ preserves the bilinear form, it acts on the level sets of $\qform$.
There are three level sets of particular interest in our paper.
The level set defined by the equation $\qform(\bx)=0$ is a double cone through the origin called the \dfn{isotropic cone}. Vectors in the isotropic cone are called \dfn{isotropic vectors}.
The level set defined by $\qform(\bx)=1$ is a hyperboloid of one sheet, and the level set defined by $\qform(\bx)=-1$ is a hyperboloid of two sheets.
The latter two level sets are separated by the isotropic cone.

A \dfn{reflection} is a linear transformation of order $2$ that fixes a hyperplane pointwise.
For a nonisotropic vector $\bv$, we define the reflection $r_{\bv}\colon V\to V$ by 
\[
r_{\bv}(\bx) = \bx - 2\frac{\langle \bv,\bx\rangle}{\langle \bv,\bv\rangle} \bv.
\]
This reflection $r_{\bv}$ is an orthogonal transformation that fixes the plane $H_{\bv}=\{ \bx:\langle \bv,\bx\rangle=0\}$ pointwise and swaps the two half-spaces $\cH_{\bv}^+=\{ \bx:\langle \bv,\bx\rangle>0\}$ and $\cH_{\bv}^- = \{\bx: \langle \bv,\bx\rangle<0\}$.
If $\qform(\bv)=1$, the reflection takes the simpler form $r_{\bv}(\bx) = \bx - 2\langle \bv,\bx\rangle \bv$.
From now on, we only consider reflections $r_{\bv}$ where $\bv$ has norm $1$.

Next we record a few easy facts we will need.

\begin{proposition}\label{prop:ortho_planes}
    Let $\bu\in V$ be a nonzero vector.
    \begin{enumerate}
        \item If $\qform(\bu)=0$, then $H_{\bu}$ is the plane containing $
        \bu$ tangent to the isotropic cone.
        \item If $\qform(\bu)>0$, then $H_{\bu}=\Span\{\bv,\bv'\}$ for some pair of isotropic vectors $\bv,\bv'$.
        \item If $\qform(\bu)<0$, then the nonzero vectors in $H_{\bu}$ lie outside the isotropic cone.
    \end{enumerate}
\end{proposition}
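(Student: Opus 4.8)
The plan is to exploit the signature $(2,1)$ of $\langle -,-\rangle$ together with the orthogonal decomposition of $V$ induced by $\bu$. The unifying principle across all three cases is that the way $H_{\bu}$ meets the isotropic cone is governed entirely by the signature of the restriction of $\langle-,-\rangle$ to the plane $H_{\bu}$: a positive-definite plane meets the cone only at the origin, a plane of signature $(1,1)$ meets it in two distinct lines, and a degenerate plane meets it in a single line. So in each case I would first pin down the signature of $\langle-,-\rangle|_{H_{\bu}}$ and then read off the geometry.

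For parts (2) and (3), observe that when $\bu$ is non-isotropic the line $\RR\bu$ is a nondegenerate subspace, so $V=\RR\bu\oplus H_{\bu}$ is an orthogonal direct sum; by Sylvester's law of inertia the signatures of the two summands add up to $(2,1)$. If $\qform(\bu)>0$, then $\RR\bu$ has signature $(1,0)$, forcing $H_{\bu}$ to have signature $(1,1)$. Such a hyperbolic plane has exactly two isotropic lines, and choosing nonzero vectors $\bv,\bv'$ spanning these lines gives linearly independent isotropic vectors with $H_{\bu}=\Span\{\bv,\bv'\}$, which is part (2). If instead $\qform(\bu)<0$, then $\RR\bu$ has signature $(0,1)$, forcing $H_{\bu}$ to have signature $(2,0)$, i.e.\ $\langle-,-\rangle|_{H_{\bu}}$ is positive definite; hence every nonzero $\bx\in H_{\bu}$ satisfies $\qform(\bx)>0$, so in particular $\qform(\bx)\neq 0$ and $\bx$ lies outside the isotropic cone, which is part (3).

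For part (1) the decomposition argument breaks down, since $\qform(\bu)=0$ forces $\bu\in H_{\bu}$, so I would argue directly. Because $\bu$ is orthogonal to every vector of $H_{\bu}$ and itself lies in $H_{\bu}$, the line $\RR\bu$ is contained in the radical of $\langle-,-\rangle|_{H_{\bu}}$. Since a form of signature $(2,1)$ has Witt index $1$ and therefore admits no $2$-dimensional totally isotropic subspace, the radical of $\langle-,-\rangle|_{H_{\bu}}$ is exactly $\RR\bu$ and the induced form on $H_{\bu}/\RR\bu$ is definite. Consequently, for $\bx\in H_{\bu}$ the value $\qform(\bx)$ depends only on the class of $\bx$ modulo $\RR\bu$, and $\qform(\bx)=0$ forces $\bx\in\RR\bu$. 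Thus $H_{\bu}$ meets the isotropic cone precisely along the line through $\bu$, which is exactly the statement that $H_{\bu}$ is the plane containing $\bu$ tangent to the cone. (Equivalently, $H_{\bu}$ is the kernel of the differential $\bx\mapsto 2\langle\bu,\bx\rangle$ of $\qform$ at the smooth point $\bu$, which is by definition the tangent plane.)

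I expect the only genuine subtlety to be in part (1): here the clean ``signatures add'' decomposition is unavailable, so one must instead track the degeneracy of the restricted form, and in particular rule out that $H_{\bu}$ could be totally isotropic by invoking that signature $(2,1)$ caps totally isotropic subspaces at dimension $1$. Parts (2) and (3) are then essentially bookkeeping with Sylvester's law of inertia once the orthogonal decomposition $V=\RR\bu\oplus H_{\bu}$ is in hand.
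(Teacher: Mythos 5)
Your proof is correct, but it takes a genuinely different route from the paper's. The paper proves part (1) first, by expanding $\qform(a\bu+b\bv)$ directly to show any isotropic $\bv\in H_{\bu}$ spans a totally isotropic plane with $\bu$ and hence is proportional to $\bu$; it then derives parts (2) and (3) \emph{from} part (1), using the symmetry $\bv\in H_{\bu}\iff \bu\in H_{\bv}$: for (2) it observes that the two planes through $\bu$ tangent to the cone are $H_{\bv},H_{\bv'}$ for isotropic $\bv,\bv'$, which must then lie in $H_{\bu}$ and span it; for (3) it argues that if $H_{\bu}$ met the interior of the cone it would contain an isotropic vector $\bv$, putting the negative-norm vector $\bu$ inside the tangent plane $H_{\bv}$, contradicting (1). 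You instead treat (2) and (3) uniformly via the orthogonal decomposition $V=\RR\bu\oplus H_{\bu}$ and additivity of signature, and handle (1) by a radical/Witt-index argument. Your approach is more algebraically self-contained: it avoids the geometric inputs the paper takes for granted (that an exterior point lies on exactly two tangent planes, and that a plane meeting the interior of the cone must meet the cone itself), and it makes explicit the Witt-index fact that the paper's step ``any linear combination of $\bu$ and $\bv$ is isotropic, therefore $\bv\in\RR\bu$'' silently relies on. What the paper's route buys is a shorter, picture-driven argument in the style of the rest of its Section 1, with (2) and (3) falling out of (1) almost for free; what yours buys is independence of the three parts from one another and from unproved facts about tangent planes, at the cost of invoking Sylvester's law of inertia and Witt's theory as black boxes.
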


%
%

Let $X^\perp$ denote the orthogonal complement of a linear subspace $X$ of $V$. 

\begin{lemma}\label{lem:orthogonal_arrangement}
    Let $L$ be a line through the origin, and let $\cA$ be an arrangement of planes through the origin.
    Then the line $L$ is contained in each plane in $\cA$ if and only if each line in the set ${\cA^{\perp}=\{H^{\perp}:H\in\cA\}}$ is contained in the plane $L^{\perp}$.
\end{lemma}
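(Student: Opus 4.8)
The plan is to reduce the statement to the single-plane case and then quantify over the arrangement. The only ingredient beyond the order-reversing property of $(-)^{\perp}$ already noted in the text is that, because $\langle -,-\rangle$ is nondegenerate, orthogonal complementation is an involution on subspaces: $(X^{\perp})^{\perp}=X$ for every linear subspace $X\subseteq V$. First I would record this fact, together with the biconditional
\[
X\subseteq Y \iff Y^{\perp}\subseteq X^{\perp},
\]
valid for all subspaces $X,Y\subseteq V$. The forward implication is the inclusion-reversal of $(-)^{\perp}$ stated in the text; the converse follows by applying $(-)^{\perp}$ a second time to $Y^{\perp}\subseteq X^{\perp}$ and invoking the involution property $(X^{\perp})^{\perp}=X$.

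Next I would specialize this biconditional to a single plane $H\in\cA$ by taking $X=L$ and $Y=H$, obtaining $L\subseteq H$ if and only if $H^{\perp}\subseteq L^{\perp}$. Here the dimensions match the assertion of the lemma: since $L$ is a line, $L^{\perp}$ is a plane, and since $H$ is a plane, each $H^{\perp}$ is a line.

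Finally, I would conclude by quantifying over the whole arrangement: the line $L$ is contained in every plane of $\cA$ if and only if, for every $H\in\cA$, we have $H^{\perp}\subseteq L^{\perp}$, which is exactly the statement that every line in $\cA^{\perp}=\{H^{\perp}:H\in\cA\}$ is contained in the plane $L^{\perp}$. Because the result is a formal consequence of the properties of orthogonal complementation for a nondegenerate form, there is no substantive obstacle; the only point that requires care is to use nondegeneracy so that $(-)^{\perp}$ is a genuine involution, and hence an inclusion-reversing bijection, rather than merely inclusion-reversing.
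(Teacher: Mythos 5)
Your proof is correct and follows essentially the same route as the paper, which simply observes that orthogonal complementation reverses inclusion of subspaces and applies this to the collection $\cA$. Your added care in noting that nondegeneracy of $\langle -,-\rangle$ makes $(-)^{\perp}$ an involution—so that inclusion-reversal upgrades to the biconditional $X\subseteq Y \iff Y^{\perp}\subseteq X^{\perp}$—is a worthwhile detail the paper leaves implicit, but it is the same argument.
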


\begin{proof}
Orthogonal complementation reverses inclusion of subspaces.
Applying this fact to a collection of subspaces proves the result.
\end{proof}

Let $\cL$ be the set of lines through the origin.
This set decomposes into three orbits under the $O(2,1)$ action.
Namely, the orbits are $\cL_{<0},\ \cL_0$, and $\cL_{>0}$, which consist of the lines spanned by a vector of negative norm, zero norm, and positive norm, respectively.

Consider the affine plane \[\AA = \{(x,y,z):x+y+z=1\}\] containing $\Delta$, and let $\AA_0 = \{(x,y,z):x+y+z=0\}$ be the linear plane parallel to $\AA$.
The convex hull of $\Delta$, denoted $\conv(\Delta)$, is an equilateral triangle in $\AA$.
Let \[\DD = \{\bx\in\AA: \qform(\bx)\leq 0\}.\]

\begin{lemma}\label{lem:disk}
    The plane spanned by any two elements of $\Delta$ is tangent to the isotropic cone.
\end{lemma}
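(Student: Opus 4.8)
The plan is to reduce to a single pair of basis vectors using the symmetry of the form, and then either compute a norm directly or identify the plane as $H_{\bu}$ for an isotropic $\bu$ and invoke \cref{prop:ortho_planes}. Since $\langle-,-\rangle$ is invariant under permuting the indices of $\Delta$, the three planes $\Span\{\alpha_i,\alpha_j\}$ ($i\neq j$) are carried to one another by coordinate permutations in $O(2,1)$, so it suffices to treat $P=\Span\{\alpha_1,\alpha_2\}$.

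The most self-contained route computes the norm of an arbitrary vector of $P$. Expanding via the bilinear form, for $a\alpha_1+b\alpha_2\in P$ one gets
\[
\qform(a\alpha_1+b\alpha_2)=a^2+b^2-2ab=(a-b)^2.
\]
Hence the isotropic vectors of $P$ are exactly the scalar multiples of $\alpha_1+\alpha_2$, a single line through the origin. Since $P$ also contains $\alpha_1$, which has positive norm, the plane $P$ meets the isotropic cone in precisely one line and is therefore tangent to it.

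Alternatively, to tie the statement directly to the earlier machinery, I would realize $P$ as a plane of the form $H_{\bu}$ with $\bu$ isotropic. Solving $\langle\bu,\alpha_1\rangle=\langle\bu,\alpha_2\rangle=0$ in coordinates forces $\bu=\alpha_1+\alpha_2=(1,1,0)$ up to scale, and a direct check gives $\qform(\bu)=0$. Thus $P=H_{\bu}$ for an isotropic normal vector $\bu$, and \cref{prop:ortho_planes}(1) immediately yields that $P$ is tangent to the isotropic cone.

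The only real (and modest) obstacle is to fix the intended meaning of \emph{tangent}: a plane through the origin is tangent to the isotropic cone exactly when it meets the cone in a single line, equivalently when it equals $H_{\bu}$ for some isotropic $\bu$, as established in \cref{prop:ortho_planes}(1). Once this interpretation is in place, the computation $\qform(a\alpha_1+b\alpha_2)=(a-b)^2$ — or equivalently the isotropy of the normal vector $\alpha_1+\alpha_2$ — completes the argument with no further difficulty.
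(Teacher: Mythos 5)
Your proposal is correct and its main argument is essentially identical to the paper's proof: reduce by symmetry to $\Span\{\alpha_1,\alpha_2\}$, compute $\qform(a\alpha_1+b\alpha_2)=(a-b)^2$, and conclude the plane meets the isotropic cone in the single line spanned by $\alpha_1+\alpha_2$. Your alternative route via $P=H_{\alpha_1+\alpha_2}$ and \cref{prop:ortho_planes}(1) is a valid bonus observation, but it does not change the substance of the argument.
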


\begin{proof}
    By symmetry, it is enough to prove the statement for the plane $H$ spanned by $\alpha_1,\alpha_2$.
    We compute
    \[ 
    \qform(x\alpha_1+y\alpha_2) = x^2+y^2-2xy = (x-y)^2.
    \]
    Hence, the intersection of $H$ with the isotropic cone is the line spanned by $\alpha_1+\alpha_2$.
    We conclude that $H$ is tangent to the isotropic cone at that line.
\end{proof}

By intersecting with the plane $\AA$, we deduce from \cref{lem:disk} that the boundary of $\DD$ is the inscribed circle of the triangle $\conv(\Delta)$. 
The center of $\DD$ is the point \[O=\left(\frac{1}{3},\frac{1}{3},\frac{1}{3}\right).\]
Each line in $\cL_{<0}$ contains a unique point in the interior $\DD^{\circ}$, and each line in $\cL_0$ contains a unique point on the boundary $\partial\DD$.

\begin{remark}\label{rem:proj_action}
    The linear action of $O(2,1)$ on $V$ induces a projective linear action of $O(2,1)$ on $\DD$.
\end{remark}

For $\bx\in V\setminus\AA_0$, let $\wh{\bx}\in\AA$ be the unique point such that $\bx$ is on the line through $\wh{\bx}$ and the origin. For $\mathscr X\subseteq V\setminus\AA_0$, let $\widehat {\mathscr X}=\{\wh{\bf x}:\bf x\in\mathscr X\}$. In particular, if $W$ is a linear subspace of $V$ not contained in $\AA_0$, then $\wh{W}=W\cap \AA$.
We may translate \cref{prop:ortho_planes} to the affine plane $\AA$ as follows.

\begin{proposition}\label{prop:affine_orthogonal_subspaces}
    Let $u \in\AA$.
    \begin{enumerate}
        \item If $u\in\partial\DD$, then $\wh{H}_u$ is the tangent line to $\partial\DD$ at $u$.
        \item If $u\in\AA\setminus\DD$, then $\wh{H}_u$ is the line in $\AA$ through the two distinct points $p,q\in\partial \DD$ such that $u\in \wh{H}_{p}\cap \wh{H}_{q}$.
        \item If $u\in\DD^{\circ}$ and $u\neq O$, then $\wh{H}_u\cap\DD=\varnothing$.
    \end{enumerate}
\end{proposition}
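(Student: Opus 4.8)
The plan is to obtain \cref{prop:affine_orthogonal_subspaces} by transporting each clause of \cref{prop:ortho_planes} from $V$ to the affine chart $\AA$ through the map $\bx\mapsto\wh{\bx}$. The guiding observation is that $\wh{\cdot}$ realizes $\AA$ as an affine chart of the projectivization of $V$, with $\AA_0$ playing the role of the line at infinity: a line through the origin not lying in $\AA_0$ becomes a point of $\AA$, and a plane $H$ through the origin with $H\neq\AA_0$ becomes the line $\wh{H}=H\cap\AA$. Under this dictionary the isotropic cone maps onto $\partial\DD$, the region $\{\qform<0\}$ maps onto $\DD^{\circ}$, and $\{\qform>0\}$ maps onto $\AA\setminus\DD$, so each part of \cref{prop:affine_orthogonal_subspaces} is the shadow in $\AA$ of the corresponding part of \cref{prop:ortho_planes}.

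First I would record the two facts that make the translation legitimate. A short computation of the $\langle-,-\rangle$-normal to $\AA_0$ shows $\AA_0=H_O$; hence $\wh{H}_u=H_u\cap\AA$ is a genuine line of $\AA$ exactly when $u\neq O$. Since $\qform(O)<0$, this holds automatically in Parts~(1) and~(2) and is precisely the hypothesis isolated in Part~(3). Second, no nonzero vector of $\AA_0$ is isotropic (indeed $\qform$ restricts to a positive definite form on $\AA_0$), so every isotropic line meets $\AA$ and $\wh{\cdot}$ carries the isotropic cone bijectively onto $\partial\DD$.

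With these in hand, Part~(1) is immediate: for $u\in\partial\DD$ the vector $u$ is isotropic, so by \cref{prop:ortho_planes}(1) the plane $H_u$ is tangent to the isotropic cone along $\Span\{u\}$; intersecting with $\AA$ produces a line through $u=\wh{u}$ meeting $\partial\DD$ only at $u$, that is, the tangent line. For Part~(3), if $u\in\DD^{\circ}$ and $u\neq O$ then $\qform(u)<0$, and \cref{prop:ortho_planes}(3) guarantees that $H_u$ contains no nonzero vector of nonpositive norm; therefore $\wh{H}_u=H_u\cap\AA$ is disjoint from $\DD=\{\qform\leq 0\}\cap\AA$.

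The most delicate clause is Part~(2), and this is where I expect the argument to require the most care. For $u\in\AA\setminus\DD$ we have $\qform(u)>0$, so \cref{prop:ortho_planes}(2) writes $H_u=\Span\{\bv,\bv'\}$ with $\bv,\bv'$ isotropic and spanning a plane; their images $p=\wh{\bv}$ and $q=\wh{\bv'}$ are then distinct points of $\partial\DD$ lying on $\wh{H}_u$, so $\wh{H}_u$ is the line through $p$ and $q$. To identify $p,q$ as claimed I would invoke the symmetry of the bilinear form in the form $\bv\in H_u\iff u\in H_{\bv}$: from $\bv,\bv'\in H_u$ we get $u\in H_{\bv}\cap H_{\bv'}$, whence $u=\wh{u}\in\wh{H}_p\cap\wh{H}_q$, and by Part~(1) these are the tangent lines to $\partial\DD$ at $p$ and $q$. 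Since exactly two tangent lines can be drawn from the exterior point $u$, the points $p,q$ are exactly the two tangency points, which is the stated characterization. The only thing to watch throughout this clause is the bookkeeping between a line $\Span\{u\}$ and its affine representative $u$, together with the genericity inputs ensuring $p\neq q$ and $\wh{H}_u$ is a line; none of this is hard, but it is where the translation must be spelled out precisely.
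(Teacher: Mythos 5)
Your proposal is correct and follows exactly the route the paper intends: the paper states \cref{prop:affine_orthogonal_subspaces} as an immediate translation of \cref{prop:ortho_planes} obtained by intersecting with $\AA$, offering no further proof, and your argument simply fills in the routine details of that translation (the identities $H_O=\AA_0$, positive definiteness of $\qform$ on $\AA_0$, and the symmetry $\bv\in H_u\iff u\in H_{\bv}$ for Part~(2)). All of these supporting facts are consistent with what the paper records elsewhere, so there is nothing to correct.
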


For $\bx$ of positive norm, $\bx$ is in the half-space $\cH_{\bx}^+$ by definition.
Combined with the observation that $H_O = \AA_0$ and $O\in\cH_O^-$, we deduce the following corollary of \cref{prop:affine_orthogonal_subspaces}.

\begin{corollary}\label{cor:circle_decomposition}
    Given $u\in\AA\setminus\DD$, let $p,q$ be the two distinct points in $\wh{H}_u\cap\partial\DD$.
    The line $\wh{H}_u$ separates $\DD$ into two regions.
    The set $\wh{\cH}_u^-\cap \DD$ is the region containing $O$. 
    The other region, $\wh{\cH}_u^+\cap \DD$, is contained in the triangle $\conv(u,p,q)$.
\end{corollary}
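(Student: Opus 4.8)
The plan is to read everything off the tangent-line description of $\wh{H}_u$ supplied by \cref{prop:affine_orthogonal_subspaces}. Since $u\in\AA\setminus\DD$, the vector $u$ has positive norm, so by part~(2) of that proposition the line $\wh{H}_u$ passes through two distinct points $p,q\in\partial\DD$. A secant line of a disk meets it in a chord whose interior lies in $\DD^{\circ}$, and such a chord separates $\DD$ into exactly two regions; this is the first assertion. Moreover, the linear functional $\langle u,-\rangle$ restricts to an affine function on $\AA$ whose zero set is $\wh{H}_u=H_u\cap\AA$, so the two regions are precisely $\wh{\cH}_u^+\cap\DD=\{a\in\DD:\langle u,a\rangle>0\}$ and $\wh{\cH}_u^-\cap\DD=\{a\in\DD:\langle u,a\rangle<0\}$.

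Next I would locate $O$. Here I use the observation recorded just before the statement: $H_O=\AA_0$ is parallel to $\AA$ and $O\in\cH_O^-$, so the affine function $\langle O,-\rangle$ is a negative constant on all of $\AA$; in particular $u\in\AA$ gives $\langle O,u\rangle<0$. Since the form is symmetric, $\langle u,O\rangle<0$, i.e.\ $O\in\wh{\cH}_u^-\cap\DD$. This identifies the region containing $O$ as $\wh{\cH}_u^-\cap\DD$ and proves the second assertion.

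The substantive step is the containment $\wh{\cH}_u^+\cap\DD\subseteq\conv(u,p,q)$. By \cref{prop:affine_orthogonal_subspaces}, the lines $\wh{H}_p$ and $\wh{H}_q$ are the tangent lines to $\partial\DD$ at $p$ and $q$, and $u=\wh{H}_p\cap\wh{H}_q$. Consequently the three edges of $\conv(u,p,q)$ lie on $\wh{H}_u$ (the edge $pq$), $\wh{H}_p$ (the edge $up$), and $\wh{H}_q$ (the edge $uq$), so the triangle is the intersection of the three closed half-planes bounded by these lines, each chosen to contain the opposite vertex. The key geometric input is that each tangent line is a \emph{supporting} line of the convex region $\DD$, meeting $\DD$ only at its point of tangency; in the bilinear form this reads $\langle p,a\rangle\le 0$ and $\langle q,a\rangle\le 0$ for every $a\in\DD$, with equality only at $p$ respectively $q$ (using that $p,q$ are isotropic and $\langle p,O\rangle,\langle q,O\rangle<0$). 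I would then match signs: the half-plane bounded by $\wh{H}_u$ and containing $u$ is $\{\langle u,-\rangle\ge 0\}$ because $\qform(u)>0$; the half-plane bounded by $\wh{H}_p$ and containing $q$ is $\{\langle p,-\rangle\le 0\}$ because $\langle p,q\rangle<0$; and likewise for $\wh{H}_q$. Any $a\in\wh{\cH}_u^+\cap\DD$ satisfies $\langle u,a\rangle>0$ and, lying in $\DD$, also $\langle p,a\rangle\le 0$ and $\langle q,a\rangle\le 0$; hence $a$ lies in all three half-planes and therefore in $\conv(u,p,q)$.

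I expect the main obstacle to be this last step, and within it the bookkeeping of orientations: one must verify that the secant and the two tangents are oriented so that their half-planes cut out exactly the triangle (equivalently, that each half-plane contains the opposite vertex), which comes down to the sign facts $\qform(u)>0$, $\langle p,q\rangle<0$, and the supporting-line inequalities $\langle p,-\rangle,\langle q,-\rangle\le 0$ on $\DD$. Once these signs are pinned down, the containment is immediate, and the first two assertions follow formally from \cref{prop:affine_orthogonal_subspaces} together with the sign of $\langle u,O\rangle$.
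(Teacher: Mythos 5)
Your proposal is correct and follows essentially the same route as the paper: the paper derives this corollary in one line from \cref{prop:affine_orthogonal_subspaces} together with exactly the two sign observations you use (that a positive-norm vector $u$ lies in $\cH_u^+$, and that $H_O=\AA_0$ with $O\in\cH_O^-$, which makes $\langle u,O\rangle<0$ on all of $\AA$). Your writeup simply makes explicit the supporting-line inequalities $\langle p,-\rangle,\langle q,-\rangle\le 0$ on $\DD$ and the description of $\conv(u,p,q)$ as an intersection of three half-planes, which the paper leaves implicit (cf.\ its discussion of \cref{fig:orthogonal}).
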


We illustrate \cref{prop:affine_orthogonal_subspaces} and \cref{cor:circle_decomposition} in \cref{fig:orthogonal}. Consider distinct points $p,q$ on the boundary of $\DD$.
The lines $\wh{H}_p$ and $\wh{H}_q$ are tangent to the circle and meet at a point $u$ outside the disk.
Then $\wh{H}_u$ is the line through $p$ and $q$.
If $v$ is the midpoint of the line segment between $p$ and $q$, then $\wh{H}_v$ is the line through $u$ parallel to $\wh{H}_u$.

\begin{figure}
  \includegraphics[height=75.658mm]{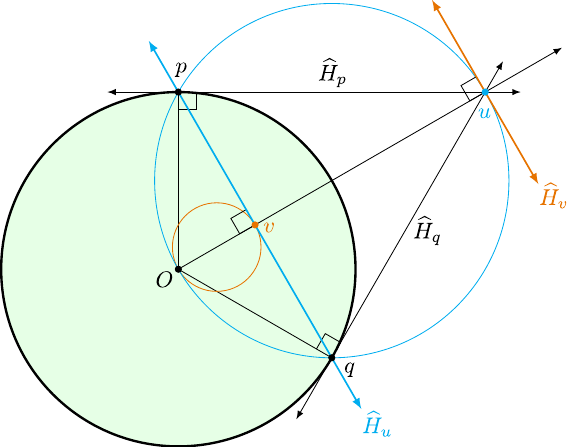}
  \caption{The points in $\AA$ other than $O$ are in bijection with the
    lines in $\AA$ not through $O$ using closest point projection and
    inversion.  Under complementation, the point $v \in \AA$
    corresponds to the line $\widehat H_v$, and the point $u$ corresponds to
    the line $\widehat H_u$.\label{fig:orthogonal}}
\end{figure}

We define angles between lines in the affine plane $\AA$ by projecting them to the linear plane $\AA_0$, which is a subspace of $V$ on which $\langle -,-\rangle$ is positive definite.
The following lemma is immediate.

\begin{lemma}\label{lem:right_angle}
    Suppose $H$ is a plane such that $H\neq \AA_0$ and $O\notin H$. 
    If $L=H^{\perp}$ is the line orthogonal to $H$, then the ray from $O$ towards $\wh{L}$ intersects the line $\wh{H}$ at a right angle.
\end{lemma}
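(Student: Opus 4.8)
The plan is to reduce the statement to a single orthogonality computation carried out inside the positive-definite plane $\AA_0$. Write $H=H_{\bv}=\bv^{\perp}$ for a nonzero normal vector $\bv$; since $\langle-,-\rangle$ is nondegenerate, $L=H^{\perp}=\Span\{\bv\}$, so $\wh L=\wh{\bv}$. I would then invoke the two facts recorded just before the lemma: $\AA_0=H_O$ (equivalently $\AA_0=O^{\perp}$) and $\qform(O)<0$, so that $\langle-,-\rangle$ restricts to a genuine inner product on $\AA_0$, and this is precisely the inner product used to measure angles in $\AA$. Note that $\wh L=\wh{\bv}$ is defined exactly because $O\notin H$ forces $\bv\notin\AA_0$.

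The key observation is that the two relevant directions both lie in $\AA_0$: the ray from $O$ toward $\wh L$ has direction $\wh{\bv}-O$ (a difference of two points of $\AA$), while the line $\wh H=H\cap\AA$ has direction spanned by any nonzero $\bd\in H\cap\AA_0$. Thus the entire right-angle content of the lemma is the identity $\langle\wh{\bv}-O,\bd\rangle=0$. Writing $\wh{\bv}=\bv/c$ with $c=v_1+v_2+v_3$ (nonzero precisely because $O\notin H$ gives $\langle O,\bv\rangle\neq 0$), one computes
\[ \langle\wh{\bv}-O,\bd\rangle=\tfrac1c\langle\bv,\bd\rangle-\langle O,\bd\rangle=0, \]
since $\bd\in H=\bv^{\perp}$ annihilates the first term and $\bd\in\AA_0=O^{\perp}$ annihilates the second. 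This is essentially immediate once one notices that both $\bv$ and $O$ are orthogonal to the direction of $\wh H$.

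What remains is to confirm that the ray genuinely meets $\wh H$, and meets it rather than its backward extension. Non-parallelism is free: $\wh{\bv}\neq O$ because $H\neq\AA_0$ means $\bv$ is not proportional to $O$, and in a positive-definite plane a nonzero vector cannot be simultaneously orthogonal and parallel to $\bd$, so the lines $\overline{O\wh L}$ and $\wh H$ cross in a unique point. To see the crossing lies on the forward ray, I would parametrize the points $O+t(\wh{\bv}-O)$ and solve $\langle\bv,\,\cdot\,\rangle=0$, obtaining $t_0=c^2/\bigl(3\qform(\bv)+c^2\bigr)$; decomposing $\bv=\bv_0+cO$ with $\bv_0\in\AA_0$ yields $\qform(\bv)=\qform(\bv_0)-c^2/3$, hence $3\qform(\bv)+c^2=3\qform(\bv_0)$, which is strictly positive because $\bv_0\neq 0$ (again using $H\neq\AA_0$) and the form is positive definite on $\AA_0$. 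Thus $t_0>0$ in every case, independent of the sign of $\qform(\bv)$. This forward-ray verification is the only point requiring any care; the right angle itself is the one-line computation above.
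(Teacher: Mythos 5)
Your proof is correct and is essentially the paper's intended argument: the paper declares this lemma immediate from its convention of measuring angles by projecting to $\AA_0$, where $\langle-,-\rangle$ is positive definite, and your identity $\langle\wh{\bv}-O,\mathbf{d}\rangle=\tfrac1c\langle\bv,\mathbf{d}\rangle-\langle O,\mathbf{d}\rangle=0$ for a direction vector $\mathbf{d}\in H\cap\AA_0$ is precisely that verification written out, using $H=\bv^{\perp}$ and $\AA_0=H_O=O^{\perp}$ (a fact the paper records just before \cref{cor:circle_decomposition}). Your extra check that the intersection point lies on the forward ray, via $3\qform(\bv)+c^2=3\qform(\bv_0)>0$, is a detail the paper leaves implicit, but it is part of the same computation rather than a different route.
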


By taking an intersection with the affine plane $\AA$, we translate \cref{lem:orthogonal_arrangement} into the collinearity statement of \cref{lem:collinear_order}. 

\begin{lemma}\label{lem:collinear_order}
    Let $u\in\AA\setminus\{O\}$, and let $\cA$ be an arrangement of linear planes that each contain $u$ but not $O$.
    Then $\wh{\cA^{\perp}} = \{\wh{H^{\perp}}:H\in\cA\}$ is a collinear set of points in $\AA$.
    Moreover, if the lines $\wh{H}$ for $H\in\cA$ are cyclically ordered around $u$ starting from $O$, then the points $\wh{H^{\perp}}$ are ordered in the same way along the line $\wh{u^{\perp}}$. 
\end{lemma}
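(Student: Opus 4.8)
The plan is to reduce \cref{lem:collinear_order} to \cref{lem:orthogonal_arrangement} by intersecting everything with the affine plane $\AA$, and then to promote the qualitative collinearity statement into the quantitative ordering claim using the right-angle description from \cref{lem:right_angle}. First I would establish the collinearity. The hypothesis says each plane $H\in\cA$ contains the line $L=\Span(u)$ through the origin (since $u\in H$ and $0\in H$, the whole line through $u$ lies in $H$); note $u\neq O$ guarantees this line is not $\AA_0$, and $O\notin H$ guarantees $\wh{H^{\perp}}$ is well-defined as a point of $\AA$. Applying \cref{lem:orthogonal_arrangement} with this common line $L$, the hypothesis ``$L\subseteq H$ for every $H\in\cA$'' is equivalent to ``every line $H^{\perp}\in\cA^{\perp}$ lies in the plane $L^{\perp}=u^{\perp}$.'' Intersecting each line $H^{\perp}$ with $\AA$ gives the point $\wh{H^{\perp}}$, and since all these lines live in the single plane $u^{\perp}$, all the points $\wh{H^{\perp}}$ lie on the single line $u^{\perp}\cap\AA=\wh{u^{\perp}}$. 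That proves collinearity.

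Next I would set up the ordering. The idea is to put a common coordinate on both the pencil of lines $\{\wh H\}$ through $u$ and on the target line $\wh{u^{\perp}}$, and to show the map $\wh H\mapsto \wh{H^{\perp}}$ is monotone between them. Concretely, for $H\in\cA$ let $\theta(H)$ be the angle (measured in $\AA_0$ after projection, as the paper defines angles) that the line $\wh H$ makes with the reference line $\wh{O^{\,\prime}}$\,---\,more precisely, measured relative to the direction from $u$ toward $O$, which is the stated starting point for the cyclic order. By \cref{lem:right_angle}, the ray from $O$ toward $\wh{H^{\perp}}=\wh{L'}$ (where $L'=H^{\perp}$) meets the line $\wh H$ at a right angle. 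Thus the point $\wh{H^{\perp}}$ is precisely where the line through $O$ perpendicular to $\wh H$ lands on $\wh{u^{\perp}}$. I would then argue that as $\wh H$ rotates monotonically about $u$ (increasing $\theta$), the foot of the perpendicular from $O$ onto the fixed line $\wh{u^{\perp}}$, determined by the perpendicularity condition, moves monotonically along $\wh{u^{\perp}}$.

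To make this precise I would parametrize the position of $\wh{H^{\perp}}$ along $\wh{u^{\perp}}$ and compute it as a monotone function of $\theta(H)$. Since every $\wh{H^{\perp}}$ lies on the fixed line $\wh{u^{\perp}}$ and is pinned down by the right-angle condition of \cref{lem:right_angle}, its signed distance from the foot of the perpendicular from $O$ to $\wh{u^{\perp}}$ is a strictly monotone (in fact, tangent-like) function of the angle between $\wh H$ and $\wh{u^{\perp}}$; equivalently, it is monotone in the cyclic angle $\theta(H)$ of $\wh H$ about $u$, as long as that angle ranges over a half-turn, which it does because a full pencil of lines through $u$ is recovered once as $\theta$ runs over an interval of length $\pi$ starting from the direction toward $O$. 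Hence the cyclic order of the lines $\wh H$ about $u$ starting from $O$ matches the linear order of the points $\wh{H^{\perp}}$ along $\wh{u^{\perp}}$.

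The main obstacle I expect is the bookkeeping of orientations and the matching of the two ``starting from $O$'' conventions: one must check that the chosen initial ray of the cyclic order about $u$ corresponds correctly to the chosen endpoint of the linear order along $\wh{u^{\perp}}$, and that the monotonicity has the right (not reversed) sign. This is where a clean choice of reference direction\,---\,using \cref{lem:right_angle} to tie the perpendicular-foot at angle $\theta(H)$ directly to the direction from $u$ toward $O$\,---\,does the real work, and a careful (but routine) computation in the positive-definite plane $\AA_0$ confirms strict monotonicity over the relevant range.
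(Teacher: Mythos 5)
Your proposal is correct and follows essentially the same route as the paper: collinearity is deduced from \cref{lem:orthogonal_arrangement} by restricting to $\AA$, and the ordering is deduced from \cref{lem:right_angle} by tracking how the perpendicular ray from $O$ (which contains $\wh{H^{\perp}}$) rotates as $\wh{H}$ rotates about $u$. The only cosmetic difference is that the paper phrases the monotonicity as a betweenness statement for three planes at a time, while you phrase it as a monotone (indeed cotangent-like) angular parametrization over a half-turn; both arguments carry the same geometric content.
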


\begin{proof}
    The collinearity is immediate from \cref{lem:orthogonal_arrangement} by restricting to the affine plane $\AA$.
    For the second statement, it is enough to consider three planes at a time.
    
    Suppose $\cA=\{H_1,H_2,H_3\}$, where $H_2$ is not incident to the region of $\AA\setminus\bigcup\cA$ containing $O$.
    For each $i$, consider the ray $\rho_i$ starting at $O$ that is perpendicular to $\wh{H}_i$.
    By \cref{lem:right_angle}, the point $u_i = \wh{H_i^{\perp}}$ is on the ray $\rho_i$.
    Since $\rho_2$ is between $\rho_1$ and $\rho_3$, the point $u_2$ is between $u_1$ and $u_3$; see \cref{fig:collinear_fig}.
\end{proof}


\begin{figure}
    \includegraphics[height=80.370mm]{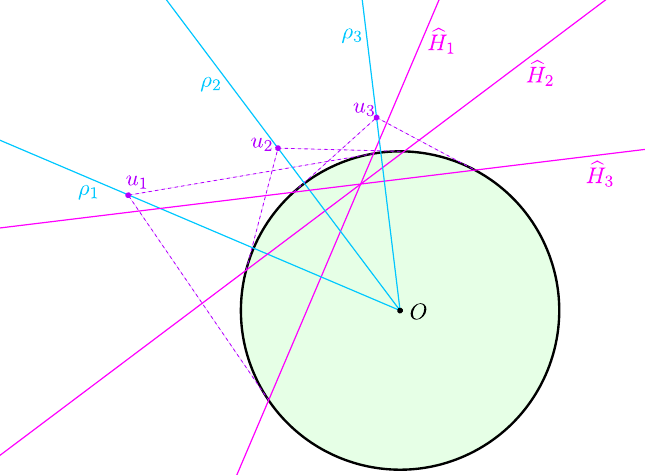}
    \caption{An illustration of \cref{lem:collinear_order} \label{fig:collinear_fig}}
\end{figure}

\begin{remark}\label{rem:klein}
  For any line $\ell$ in $\AA$ that passes through the interior of $\DD$, the line segment $\ell\cap\DD$ is a geodesic between ideal points in the Klein model of the hyperbolic plane.
  The projective linear action of a reflection $r_{\bf v}$ on $\DD$ is an isometry of the Klein model.
  For the purpose of visualization, we occasionally transform the Klein model into the Poincar\'e model. See \cite{CFP97} 
  for the relations between the different models of hyperbolic space.
  Geodesics in the Poincar\'e model are arcs of circles orthogonal to the boundary of $\DD$.
  However, all of our computations will take place in the Klein model, so we do not give details of the Poincar\'e model. 
\end{remark}

\section{The Universal Coxeter Group}\label{sec:universal}

This section recalls the definition of the group $U_3$, its standard
representation as a reflection group, and a realization of its Coxeter complex in the disk.  See  
\cite{Humphreys, bjorner:2005Coxbook} for further background on root systems 
and \cite{DyerHohlwegRipoll} for how to represent roots in an affine patch.

\begin{definition}[Universal Coxeter Group]\label{def:universal}
  The \dfn{universal Coxeter group of rank~$n$} is the group $U_n$ with presentation 
  \[U_n=\langle s_1,\ldots,s_n\mid s_1^2=\cdots=s_n^2=\id\rangle,\] where we write $\id$ for the identity element. In other words, $U_n$ is the free product of $n$ copies of the cyclic group of order $2$. 
\end{definition}

The group $U_1$ is the cyclic group of order $2$, and the group $U_2$ is the infinite dihedral group. 
 Since the generators
  are involutions, elements of $U_3$ are described by positive words
  over the alphabet $S$.  In fact, elements are in bijection with
  positive \dfn{reduced} words, in which adjacent letters are distinct.

\subsection{The standard reflection representation}

 Every Coxeter group with $n$ Coxeter generators has a faithful representation on a vector space of dimension
  $n$ preserving a symmetric bilinear form.  We now describe one such representation for $U_3$. 

Let $V=\RR^3$ be the vector space with distinguished basis $\Delta=\{\alpha_1,\alpha_2,\alpha_3\}$ and bilinear form $\langle -,-\rangle$ defined in \cref{sec:hyperbolic}.

\begin{definition}
The map $s_i\mapsto r_{\alpha_i}$ extends to a faithful representation of $U_3$ on $V$ preserving the form $\langle -,-\rangle$. 
This representation is the \dfn{standard reflection representation} of $U_3$.
\end{definition}

By \cref{rem:proj_action}, we obtain a projective linear representation of $U_3$ on $\DD$ as follows.

\begin{proposition}
    The linear action of $U_3$ on $V$ induces a projective linear action $\phi$ on $\DD$ such that for $w\in U_3$ and $\bx\in\DD$, we have $\phi(w)(\bx) = \wh{w\cdot \bx}$.
\end{proposition}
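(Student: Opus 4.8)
The plan is to restrict the action of $O(2,1)$ to the image of $U_3$ and then verify that the induced map on $\DD$ is given by the stated formula. First I would observe that the standard reflection representation maps $U_3$ into $O(2,1)$: each generator acts by the reflection $r_{\alpha_i}$, which is orthogonal for $\langle-,-\rangle$, so every element of $U_3$ preserves the form. By \cref{rem:proj_action}, the $O(2,1)$-action therefore induces a projective linear action on $\DD$, and restricting along the embedding $U_3\hookrightarrow O(2,1)$ yields a projective linear action $\phi$ of $U_3$ on $\DD$. It then remains to check that $\phi$ is well defined by the formula $\phi(w)(\bx)=\wh{w\cdot\bx}$ and that this formula describes a genuine group action.

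For well-definedness, recall that $\DD$ is identified with the set of lines of nonpositive norm: each line in $\cL_{<0}$ meets $\DD^{\circ}$ and each line in $\cL_0$ meets $\partial\DD$. The key point is that $\langle-,-\rangle$ is positive definite on $\AA_0$, so every nonzero vector of $\AA_0$ has positive norm; equivalently, every line contained in $\AA_0$ lies in $\cL_{>0}$. Now fix $\bx\in\DD$ and $w\in U_3$. Since $w$ preserves $\qform$, the line $w\cdot\Span\{\bx\}=\Span\{w\cdot\bx\}$ again has nonpositive norm, hence is not contained in $\AA_0$. Therefore $w\cdot\bx\notin\AA_0$, so $\wh{w\cdot\bx}$ is defined, and it is the unique point where the nonpositive-norm line $\Span\{w\cdot\bx\}$ meets $\AA$, which lies in $\DD$. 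Thus $\phi(w)$ maps $\DD$ into $\DD$.

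Finally I would verify the group-action axioms. Clearly $\phi(\id)$ is the identity on $\DD$. The only fact needed for composition is the scale-invariance of the projection: for any nonzero scalar $c$ and any $\bv\notin\AA_0$, one has $\wh{c\bv}=\wh{\bv}$, since both points equal $\Span\{\bv\}\cap\AA$. Because $\wh{v\cdot\bx}$ is a nonzero scalar multiple of $v\cdot\bx$, this yields
\[
\phi(w)\bigl(\phi(v)(\bx)\bigr)=\wh{w\cdot\wh{v\cdot\bx}}=\wh{w\cdot(v\cdot\bx)}=\wh{(wv)\cdot\bx}=\phi(wv)(\bx),
\]
so $\phi$ is an action; it is projective linear because each $\phi(w)$ is the restriction to $\DD$ of the projective transformation induced by $w\in GL(V)$. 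The only genuine subtlety in the argument is the well-definedness step, namely guaranteeing $w\cdot\bx\notin\AA_0$, and this rests entirely on the already-established fact that $\langle-,-\rangle$ is positive definite on $\AA_0$; everything else is a formal consequence of the scale-invariance of $\wh{\cdot}$.
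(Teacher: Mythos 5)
Your proof is correct and follows essentially the same route as the paper, which gives no separate argument at all: it simply invokes \cref{rem:proj_action} (the projective linear action of $O(2,1)$ on $\DD$) and restricts along the embedding $U_3\hookrightarrow O(2,1)$ given by the standard reflection representation. Your additional verifications — that $w\cdot\bx\notin\AA_0$ (via positive definiteness of the form on $\AA_0$) so that $\wh{w\cdot\bx}$ is defined and lies in $\DD$, and the action axioms via scale-invariance of $\wh{\,\cdot\,}$ — are exactly the details the paper leaves implicit, and they are carried out correctly.
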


\begin{definition}[Roots]\label{def:roots}
A \dfn{root} is any vector of the form $w\cdot \alpha_i$ for some $w\in U_3,\ i\in\{1,2,3\}$.
The set of roots $\Phi$ is the \dfn{root system}.
Elements of $\Delta$ are \dfn{simple roots}.
\end{definition}

\begin{figure}
  \includegraphics[width=11cm]{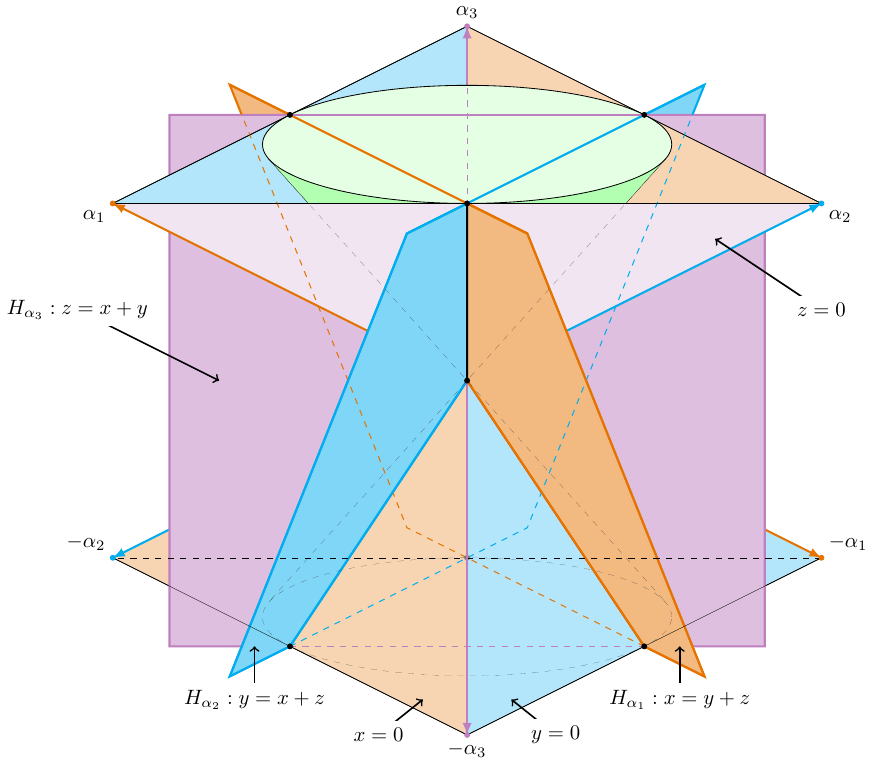}
  \caption{The action of $U_3$ on $V = \RR^3$ preserves the form $\langle - , - \rangle$.
    Under this action, the double-cone of vectors of norm $0$ is
    preserved setwise.  The generator $s_i$ sends the root $\alpha_i$
    to its negative and pointwise fixes the hyperplane
    $H_{\alpha_i}$.  The coordinate hyperplanes are included for
    reference.
    \label{fig:hyperboloid}}
\end{figure}

\begin{definition}[Coordinates]\label{def:coordinates} 
  Using the coordinate system for $V$ defined by $\Delta$,
  we can identify the generators $s_i$ with
  the square matrices that act on $V$ by left multiplication on column
  vectors.  Concretely,
  \[
  s_1 = \left[ \begin{array}{rrr} -1 & 2 & 2 \\ 0 & 1 & 0\\ 0 & 0 &
      1\end{array} \right],\
\quad   
  s_2 = \left[ \begin{array}{rrr} 1 & 0 & 0 \\ 2 & -1 & 2\\ 0 & 0 &
      1\end{array} \right],\ \quad
  s_3 = \left[ \begin{array}{rrr} 1 & 0 & 0 \\ 0 & 1 & 0\\ 2 & 2 &
      -1\end{array} \right].
  \]
  The generator $s_i$ pointwise fixes a hyperplane $H_{\alpha_i}$. Concretely, $H_{\alpha_1}$ is defined
  by the equation $x=y+z$, $H_{\alpha_2}$ is defined by
  the equation $y=x+z$, and $H_{\alpha_3}$ is defined by the equation
  $z=x+y$.  See \cref{fig:hyperboloid}. 
\end{definition}

\begin{figure}
  \includegraphics[width=11.7cm]{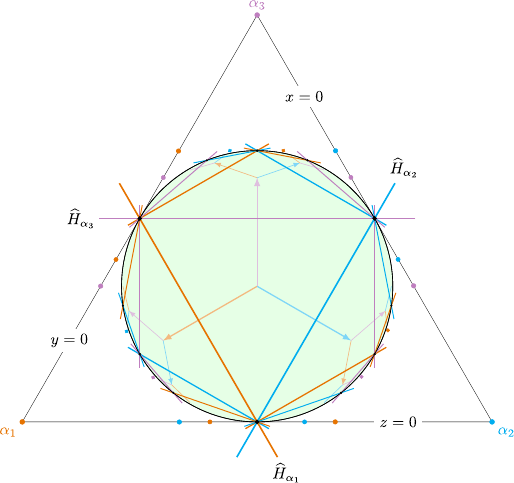}
  \caption{When \cref{fig:hyperboloid} is restricted to the
    affine plane $\AA$ with equation $x+y+z=1$, it contains the Klein
    model of the hyperbolic plane. The action of $U_3$ on (the lines
    through) this model preserves both the Farey tessellation by ideal
    triangles and its dual trivalent tree. The colored dots outside of $\DD$ depict some of the rescaled roots. 
    \label{fig:klein}}
\end{figure}

\begin{definition}[Positive and Negative Roots]\label{def:pos-roots}
  The \dfn{nonnegative span} of
  a set $\mathscr X\subseteq V=\mathbb R^3$ is the collection $\spange(\mathscr X)$ of all nonnegative linear
  combinations of vectors in $\mathscr X$. The roots in $\Phi^+ = \Phi \cap \spange(\Delta)$ are \dfn{positive},
  and the roots in $\Phi^- = \Phi \cap \spange(-\Delta)=-\Phi^+$ are
  \dfn{negative}.  
\end{definition}

Since every root is positive or negative, we have $\Phi =
  \Phi^+ \sqcup \Phi^-$. Note that the nonnegative span of $\Delta =
  \{\alpha_1,\alpha_2,\alpha_3\}$ is an octant in $\mathbb R^3$.

\begin{definition}[Linear Subsystems]\label{def:linear-subs}
  A \dfn{rank~$2$ linear subsystem} is a subset of $\Phi$ of the
  form $H\cap \Phi$, where $H$ is a $2$-dimensional subspace in $V$ spanned by $H\cap \Phi$.
  A linear subsystem $\Phi'$ can be partitioned into positive and negative roots by the hyperplane $\AA_0$.
  The extreme vectors of $\Phi'^+$ are said to be \dfn{relatively simple} in $\Phi'$.
\end{definition}

\begin{definition}[Parabolic Subsystems]
  Important special cases of linear subsystems are the \dfn{standard rank~$2$
    parabolic subsystem} $\Phi_{ij} = \Span
  \{\alpha_i,\alpha_j\}\cap \Phi$ and the \dfn{rank~$2$ parabolic
    subsystem} $w \cdot \Phi_{ij} = \Span
  \{w(\alpha_i),w(\alpha_j)\} \cap \Phi$, where $\alpha_i,\alpha_j$
  are distinct simple roots and $w\in U_3$.
\end{definition}

The elements of $\wh \Phi^+ = \{\wh \beta \mid \beta \in \Phi^+\}$ are called \dfn{rescaled (positive) roots}.
Since roots have positive norms, $\wh{\Phi}^+$ is outside the disk.
In other words, $\wh \Phi^+ \subset \conv(\Delta)\setminus \DD$.

\subsection{The Farey Tessellation}\label{subsec:farey}

In this section, we recall the Coxeter complex and present a geometric realization of the complex as a triangulation of $\DD$. The realization below is the specialization to $U_3$ of the projective root system and imaginary cone picture developed in \cite{HohlwegLabbeRipoll,DyerHohlwegRipoll}. In our normalization, the affine disk $\DD$ is the Klein model determined by the negative cone of the Lorentzian form, while $\partial\DD$ is the projectivized isotropic cone. The tessellation by ideal triangles used here is the Coxeter tessellation in this projective model. 

 A \dfn{standard parabolic subgroup} of $U_3$ is a subgroup $U_J$ generated by a proper subset $J \subset S$.  
 A \dfn{parabolic subgroup} is a conjugate $w U_J w^{-1}$ of a standard parabolic subgroup.
  A \dfn{parabolic coset} is a left coset $w U_J$ of a
 standard parabolic subgroup.  The \dfn{rank} of either is the
 size of $J$. The parabolic cosets of $U_3$ under reverse inclusion form the
 face lattice of a simplicial complex called the \dfn{Coxeter
   complex} of $U_3$.

\begin{remark}
    A parabolic subgroup $w U_J w^{-1}$ is a Coxeter group with generating set $wJw^{-1}$. 
    The parabolic subsystem $w\cdot \Phi_J$ is the root system for the restriction of the linear representation to $wU_Jw^{-1}$.
\end{remark}

Note that the parabolic subgroup $w U_J
 w^{-1}$ is the left stabilizer of the parabolic coset $w
 U_J$. We will simplify notation by writing, e.g., $U_{s_1,s_2}$ and $U_{s_1}$ instead of $U_{\{s_1,s_2\}}$ and $U_{\{s_1\}}$. The following observation will be useful later in the paper.  

\begin{remark}[Parabolic Cosets in $U_3$]\label{rem:cosets}
  The seven standard parabolic subgroups of $U_3$ are $U_{s_1,s_2}$,
  $U_{s_2,s_3}$, $U_{s_1,s_3}$, $U_{s_1}$, $U_{s_2}$, $U_{s_3}$, and $U_{\varnothing}$.
  The first three are rank $2$ infinite
  dihedral groups isomorphic to $U_2$.  The next three are rank~$1$ subgroups of order $2$ isomorphic to $U_1$.
  The last one is the trivial group $U_0$ with rank~$0$.  Note that
  for every reduced word $w$ and proper subset $J \subset S$, there is
  a unique factorization $w = w_1 w_2$ such that the suffix $w_2$ is a
  word over $J$ and there is no longer suffix with this
  property.  Since $w U_J = w_1
  U_J$, we may always assume that $w$ either is trivial or
  ends in a letter not in $J$.  This shortest representative of the
  coset is unique.
\end{remark}

The next proposition is standard, but we include a short proof for the sake of completeness. 

\begin{proposition}\label{prop:coset_subgroup_bij}
    Each nontrivial parabolic subgroup of $U_3$ stabilizes a unique parabolic coset.
\end{proposition}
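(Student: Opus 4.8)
The plan is to establish a bijection between nontrivial parabolic subgroups and the parabolic cosets they stabilize, using the structure of the free product $U_3 = U_1 * U_1 * U_1$ together with the correspondence between parabolic subgroups and rank~$2$ linear (or rank~$1$) subsystems. Recall from the discussion preceding the statement that a parabolic subgroup $wU_Jw^{-1}$ is precisely the \emph{left} stabilizer of the coset $wU_J$. So the assertion is really that each nontrivial parabolic subgroup arises as $wU_Jw^{-1}$ for a \emph{unique} coset $wU_J$; equivalently, the map sending a parabolic coset to its left stabilizer restricts to a bijection on nontrivial parabolics.

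\textbf{Surjectivity} is immediate: every nontrivial parabolic subgroup is by definition of the form $wU_Jw^{-1}$ with $J \neq \varnothing$, and this is the stabilizer of $wU_J$, so some coset is stabilized. The content is \textbf{injectivity/uniqueness}: if $wU_Jw^{-1} = w'U_{J'}w'^{-1}$, I must show $wU_J = w'U_{J'}$ as cosets (in particular $J = J'$ after normalizing). First I would recover $J$ intrinsically from the subgroup. The rank of a parabolic subgroup is well defined (it equals, e.g., the dimension of the subspace it stabilizes, or can be read off from its isomorphism type combined with which reflections it contains), so $|J| = |J'|$. For rank~$1$ parabolics $\langle r \rangle$ of order $2$, the subgroup \emph{is} the data, and $r = w s_i w^{-1}$ determines the reflecting hyperplane $H_{w(\alpha_i)}$, hence the positive root $w(\alpha_i)$, hence (via the shortest coset representative from \cref{rem:cosets}) a unique coset. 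For rank~$2$ parabolics $wU_{ij}w^{-1}$, the key observation is that such a subgroup is generated by exactly two reflections and its set of reflections spans a rank~$2$ parabolic subsystem $w\cdot\Phi_{ij}$; the two \emph{relatively simple} roots of this subsystem (in the sense of \cref{def:linear-subs}) are canonically determined by the subgroup, and they pin down both $J$ (via their images) and the coset representative.

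The cleanest route to uniqueness uses the free-product normal form. Every element of $U_3$ has a unique reduced word, and by \cref{rem:cosets} every coset $wU_J$ has a unique shortest representative $w_1$ that either is trivial or ends in a letter outside $J$. I would argue that the reflections contained in $wU_Jw^{-1}$ determine the conjugacy datum: a reflection lies in $wU_Jw^{-1}$ exactly when its root lies in the subsystem $w\cdot\Phi_J$, and a rank~$2$ subsystem determines its spanning plane $\Span\{w(\alpha_i),w(\alpha_j)\}$ uniquely. From the plane one recovers the pair of relatively simple roots, and the reduced-word combinatorics of the free product then forces a unique $(w_1, J)$ with $w_1$ the shortest representative. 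Thus distinct shortest representatives yield distinct subsystems, hence distinct subgroups, giving injectivity.

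I expect the \textbf{main obstacle} to be the rank~$2$ case: showing that the rank~$2$ parabolic subsystem $w\cdot\Phi_{ij}$ — and through it the plane $\Span\{w(\alpha_i),w(\alpha_j)\}$ together with its distinguished relatively simple roots — is recovered purely from the abstract subgroup $wU_{ij}w^{-1}$, independently of the chosen representative $w$. This requires knowing that the set of reflections in an infinite dihedral parabolic subgroup determines a unique rank~$2$ subsystem, and that the ``relatively simple'' roots of that subsystem are intrinsic. Once that recovery step is secured, matching it to the unique shortest coset representative of \cref{rem:cosets} is routine bookkeeping in the free product, and the rank~$1$ case is essentially the observation that a single reflection determines its root up to sign and hence its fixed hyperplane.
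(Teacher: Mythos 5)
Your reduction to injectivity is the right frame, but the proposal has a genuine gap at exactly the point where the universal (free-product) structure of $U_3$ must enter, and it never actually enters your argument. In the rank~$1$ case you claim uniqueness is "essentially the observation that a single reflection determines its root up to sign and hence its fixed hyperplane." That observation is true in \emph{every} Coxeter group, yet the proposition itself is false in most of them: in type $A_2$ one has $(s_1s_2)s_1(s_1s_2)^{-1}=s_2$, so the parabolic subgroup $U_{s_2}$ is the full stabilizer of the two \emph{distinct} parabolic cosets $U_{s_2}$ and $s_1s_2U_{s_1}$, even though reflections, roots, and hyperplanes are still in bijection there. Hence the step you actually need --- that a reflection (equivalently, its root or hyperplane) determines the \emph{coset} --- cannot follow from the root/hyperplane correspondence together with the shortest-representative normal form of \cref{rem:cosets}; it requires an argument exploiting the absence of relations among the generators. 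Concretely, what must be proved is: if $us_iu^{-1}=s_j$ in $U_3$, then $i=j$ and $u\in U_{s_i}$ (in a free product of copies of $\ZZ/2$, the generators are pairwise non-conjugate and self-centralizing). Your proposal never establishes this.

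The rank~$2$ case has the same missing core. You correctly note that the set of reflections in $wU_{s_i,s_j}w^{-1}$ recovers the subsystem $w\cdot\Phi_{ij}$ and hence its two relatively simple roots, but passing from those roots back to a unique coset representative is again precisely the reflection-to-coset uniqueness you have not established; calling it "routine bookkeeping in the free product" defers the entire content of the proposition. For comparison, the paper's proof is a short direct argument that isolates exactly this content: if $wU_Jw^{-1}=vU_{J'}v^{-1}$, then $U_{J'}=uU_Ju^{-1}$ with $u=v^{-1}w$; since $J'\neq\varnothing$, some simple generator lies in $uU_Ju^{-1}$, and the free-product normal form forces $u\in U_J$, whence $J=J'$ and $wU_J=vU_J$. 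Your geometric recovery scheme could in principle be completed, but only by supplying this same normal-form (conjugacy/centralizer) lemma --- the one thing your proposal treats as routine.
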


\begin{proof}
    Suppose $P$ is a parabolic subgroup that stabilizes $wU_J$ and $vU_{J'}$.
    This means the subgroups $wU_Jw^{-1}$ and $vU_{J'}v^{-1}$ are equal.
    Hence, $U_{J'} = uU_Ju^{-1}$, where $u=v^{-1}w$.
    Since $J'\neq \varnothing$, there is a simple generator in $uU_Ju^{-1}$.
    Since $U_3$ is the universal Coxeter group, this is impossible unless every generator in a reduced word for $u$ is in $U_J$ (i.e., $u\in U_J$).
    Therefore, $J=J'$ and $wU_J=vU_J$. 
\end{proof}

\begin{definition}[Farey Tessellation $\cT$]\label{def:farey}
  Following \cref{rem:klein}, we refer to points on $\partial \DD$ as \dfn{ideal points}.
  An \dfn{ideal triangle} is a triangle whose vertices are ideal points.
  A fundamental domain for the action of $U_3$ on $\DD$ is the central
  ideal triangle $T_{\id}=\{v\in\DD:\forall \beta\in\Phi^+, \langle\beta,v\rangle\leq 0\}$ in \cref{fig:klein} bounded by the lines
  $\wh H_{\alpha_i},\ i\in\{1,2,3\}$.  
  Its orbit under the
  $U_3$ action covers the interior of the disk $\DD$, and the orbits of its
  three ideal vertices form a countable collection of points in
  $\partial \DD$.  This is the \dfn{Farey tessellation $\cT$}.  There
  is a bijection $w\mapsto T_w$ from $U_3$ to the set of triangles in
  $\cT$, where $T_w=w \cdot T_{\id}$.  The left-hand side of \cref{fig:farey-cayley} shows the
  Farey tessellation redrawn in the Poincar\'e disk model, where its
  structure is easier to see.  The triangles near the center are
  labeled.
\end{definition}

\begin{figure}
  \begin{tabular}{ccc}
    $\begin{array}{c}\includegraphics[height=70mm]{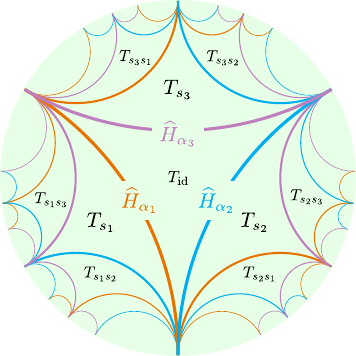}\end{array}$
    & \hspace*{2mm} &
    $\begin{array}{c}\includegraphics[height=70mm]{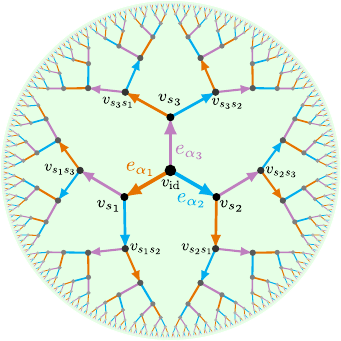}\end{array}$
  \end{tabular}
  \caption{The image on the left is the Farey tessellation $\cT$ drawn
    in the Poincar\'e disk model. The image on the right is the
    trivalent tree Cayley graph $\Gamma$ dual to the tessellation.
    Triangles, hyperplanes, and ideal points on the left correspond to
    vertices, edges, and regions on the right. 
    \label{fig:farey-cayley}} 
\end{figure}

\begin{remark}[Stabilizers]\label{rem:stabilizers}
  The stabilizers of the sides and vertices of the triangle $T_\id$ are
  the standard parabolic subgroups of $U_3$ (\cref{rem:cosets}).
  More generally, the hyperplanes in $\cT$ are in bijection with the
  rank~$1$ parabolic cosets $w U_{s_i}$, and the ideal points in
  $\cT$ are in bijection with the rank $2$ parabolic cosets $w
  U_{s_i,s_j}$.  
\end{remark}

\begin{remark}
    For a general Coxeter group $W$, there is an analogous cellular decomposition of a cone known as the Tits cone.
    Each face in this decomposition corresponds to a parabolic coset for $W$. 
    The stabilizer of a face is the parabolic subgroup that stabilizes the corresponding parabolic coset.
    Two faces have the same stabilizer if and only if they have the same linear span. 
    For the universal Coxeter group, the nontrivial parabolic cosets and subgroups are in natural bijection by \cref{prop:coset_subgroup_bij}.
    Thus, we may identify vertices and edges of $\cT$ by their linear span.
    See, for example, \cite[Section II.5.13]{Humphreys}.
\end{remark}

Let $\Gamma=\Cay(U_3,S)$ be the right Cayley graph of $U_3$ generated by $S$. The vertices of $\Gamma$ are elements of $U_3$; often when we wish to stress that we are viewing an element $w\in W$ as a vertex of $\Gamma$, we will denote it by $v_w$. The graph $\Gamma$
  naturally embeds in $\DD$ as the dual graph of the Farey tessellation $\cT$.  
  For this embedding, we let $v_w=w\cdot O \in T_w$.
  The hyperplane
  between $T_w$ and $T_{ws_i}$ is dual to the edge with endpoints
  $v_w$ and $v_{ws_i}$.  We denote this edge by $e_\beta$, where $\beta\in\Phi^+$ is the unique positive root such that $r_\beta=ws_iw^{-1}$. Since the triangles are ideal, the Cayley graph $\Gamma$ is a
  tree with unique shortest paths.  We orient the edges of $\Gamma$ away from the
  identity vertex~$v_\id$.  The right-hand side of
  \cref{fig:farey-cayley} shows a portion of the Cayley graph.
  The image is schematic so that more details can be shown.  The
  connected components of $\DD \setminus \Gamma$, which we call \dfn{faces}, are in natural
  bijection with the ideal vertices of the tessellation $\cT$ and also
  the rank~$2$ parabolic cosets. See \cref{tbl:correspondences}.

\begin{table}
  \begin{center}
    {\renewcommand{\arraystretch}{1.3}
    \begin{tabular}{|c||c|c|c|c|}
      \hline Rank & Group $U_3$ & Tiling $\mathcal{T}$ & Graph
      $\Gamma$\\ \hline
      
      \hline rank $0$ & $w\cdot U_{\varnothing}$ & triangle $T_w$ &
      vertex $v_w$\\
      
      \hline rank $1$ & $w\cdot U_{s_i}$ & hyperplane $\wh
      H_\beta$ & edge $e_\beta$\\ \hline
    \end{tabular}
    }
  \end{center}
  \caption{The parabolic cosets of rank $0$ and $1$ in $U_3$ label
    the triangles and hyperplanes of the Farey tessellation $\cT$ and
    the vertices and edges of the Cayley graph $\Gamma$.  The positive root $\beta \in \Phi^+$ corresponds to the reflection $
    ws_iw^{-1}$.\label{tbl:correspondences}}
\end{table}

\begin{definition}[Orientation of the Cayley graph]
Since the Cayley graph $\Gamma$ is a tree, it can be oriented away from the vertex $v_\id$. Thus, $v_\id$ has outdegree $3$, and every other vertex has outdegree $2$. 
\end{definition}

\begin{proposition} \label{new-parabolic-prop}
There are natural bijections between
\begin{enumerate}
    \item the faces of the Cayley graph (as embedded in $\DD$);
    \item the vertices of the Farey tessellation;
    \item the rank 2 parabolic root subsystems; 
    \item the rank 2 parabolic cosets.
\end{enumerate}
    
Specifically, given a face $f$ of the Cayley graph, there is a rank 2 parabolic coset $vU_{s_i,s_j}$ such that the vertices on the boundary of $f$ are $v_w$ for $w \in v U_{s_i,s_j}$, the edges on the boundary of $f$ are $e_{\beta}$ for $\beta \in v \cdot \Phi_{ij}$, and the vertex of the Farey tessellation is the common point in $\{\wh{H}_{\beta}:\beta\in v\cdot \Phi_{ij}\}$. 
\end{proposition}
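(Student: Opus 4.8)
The plan is to assemble the four-way correspondence from identifications that are for the most part already in place, and then to verify the explicit boundary description by a short geometric computation. The bijection between (1) the faces of $\Gamma$ and (2) the ideal vertices of $\cT$ is the planar duality for the dual pair $(\cT,\Gamma)$, and the further identification of both with (4) the rank~$2$ parabolic cosets is asserted in the discussion preceding \cref{tbl:correspondences} and in \cref{rem:stabilizers}; so the content here is to pin these down explicitly and to add (3). For the bijection between (4) and (3) the rank~$2$ parabolic subsystems, I would send a coset $vU_{s_i,s_j}$ to its stabilizing parabolic subgroup $vU_{s_i,s_j}v^{-1}$ and thence to the root system $v\cdot\Phi_{ij}$ of that subgroup. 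By \cref{prop:coset_subgroup_bij} distinct rank~$2$ cosets have distinct stabilizers, so this is a bijection onto the rank~$2$ parabolic subgroups, and each such subgroup is recovered from its subsystem as the group generated by the reflections $r_\beta$ with $\beta\in v\cdot\Phi_{ij}$.

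For the explicit description I would first record the key bookkeeping fact that the three ideal vertices of the triangle $T_w=w\cdot T_\id$ are precisely the rank~$2$ cosets $wU_{s_a,s_b}$ for the three pairs $\{a,b\}$; this holds because the vertices of $T_\id$ carry the standard rank~$2$ parabolic cosets (their own stabilizers, by \cref{rem:stabilizers}) and the assignment $w\mapsto T_w$ is equivariant. Consequently the ideal vertex $p$ attached to the coset $vU_{s_i,s_j}$ is a vertex of $T_w$ if and only if $wU_{s_i,s_j}=vU_{s_i,s_j}$, i.e.\ $w\in vU_{s_i,s_j}$. Dualizing, the fan of triangles incident to $p$ is exactly $\{T_w:w\in vU_{s_i,s_j}\}$, so the vertices on the boundary of the dual face $f$ are exactly the $v_w$ with $w\in vU_{s_i,s_j}$. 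An edge $e_\beta$, dual to the hyperplane separating $T_w$ and $T_{ws_k}$, lies on $\partial f$ if and only if both $w$ and $ws_k$ belong to $vU_{s_i,s_j}$; writing $w=vu$ with $u\in U_{s_i,s_j}$, this forces $s_k=u^{-1}(v^{-1}ws_k)\in U_{s_i,s_j}$ and hence $s_k\in\{s_i,s_j\}$, whence $\beta=w\alpha_k=v(u\alpha_k)\in v\cdot\Phi_{ij}$. Conversely every positive root of $v\cdot\Phi_{ij}$ arises this way, giving the edge description.

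It remains to identify $p$ as the common point of $\{\wh H_\beta:\beta\in v\cdot\Phi_{ij}\}$. I would work in the plane $P=\Span\{v\alpha_i,v\alpha_j\}$ containing the subsystem. By \cref{lem:disk}, together with the fact that $v$ preserves $\langle-,-\rangle$ and the isotropic cone, $P$ is tangent to the cone, and its tangency line $L$ is the isotropic line spanning $p$. By \cref{prop:ortho_planes}(1) the tangent plane along $L$ is $H_L=L^\perp$, so $P=L^\perp$ and therefore $P^\perp=L$. Hence any $\beta\in P=L^\perp$ satisfies $L\subseteq\beta^\perp=H_\beta$, which says precisely that each line $\wh H_\beta$ passes through $p=\wh L$; as distinct lines meet at most once, $p$ is their unique common point.

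I expect the main obstacle to be bookkeeping rather than depth: one must keep straight the three parallel indexings---triangles and vertices $v_w$ by group elements, hyperplanes and edges $e_\beta$ by positive roots, and ideal points and faces by rank~$2$ cosets---and confirm that the fan of triangles around $p$ matches the coset while the interleaved fan of hyperplanes matches the positive roots of the subsystem with no omissions or repetitions. The single genuinely geometric input, namely the tangency computation identifying $P^\perp$ with the isotropic line through $p$, is short once \cref{lem:disk} is in hand.
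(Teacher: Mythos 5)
Your proof is correct. The paper gives no separate proof of this proposition---it is stated as a direct consequence of the constructions immediately preceding it (the duality between $\Gamma$ and the Farey tessellation, the orbit--coset labelling of ideal points in \cref{rem:stabilizers}, and \cref{prop:coset_subgroup_bij})---and your argument is a correct, more detailed assembly of exactly those ingredients, with the tangency computation via \cref{lem:disk} and \cref{prop:ortho_planes}(1) supplying the one genuinely geometric detail (that the ideal vertex is the common point of the lines $\wh{H}_\beta$) that the paper leaves implicit.
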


We now discuss in more detail the correspondence between the edges of $\partial f$ and the rank 2 parabolic subsystem above.
In the face $f$, there is a unique \dfn{source vertex} $u_0$ such that $\partial f$ is oriented as $\cdots u_{-2} \leftarrow u_{-1} \leftarrow u_0 \rightarrow u_1 \rightarrow u_2 \rightarrow \cdots$.
We define $\yy(f)$ to be the group element such that $u_0 = v_{\yy(f)}$.
The following statement follows from \cref{lem:collinear_order}.

\begin{proposition} \label{parabolic-geometry}
    With notation as above, the vertices of $f$ are $\{ v_w : w \in \yy(f) U_{s_i,s_j} \}$ and the edges of $f$ are $\{ e_{\beta} : \beta \in \yy(f) \Phi_{ij} \}$. For $j>0$, let $e_{\beta_j}$ be the edge $u_{j-1} \to u_j$ and, for $j<0$, let $e_{\beta_j}$ be the edge $u_{j+1} \to u_j$. The rescaled roots $\{ \wh\beta : \beta \in \yy(f) \Phi_{ij} \}$ lie on a line segment in $\AA$, in the order $\wh\beta_{-1}$, $\wh\beta_{-2}$, $\wh\beta_{-3}$, \dots, $\wh\beta_3$, $\wh\beta_2$, $\wh\beta_1$. In particular, the relatively simple roots are $\beta_{-1}$ and $\beta_1$, and $\{ \beta_{-1}, \beta_1 \} = \yy(f) \{ \alpha_i, \alpha_j \}$. 
\end{proposition}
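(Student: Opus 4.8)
The plan is to reduce the statement to the combinatorics of the coset $\yy(f)\,U_{s_i,s_j}$ together with the orthogonality dictionary of \cref{sec:hyperbolic}. The first task is to pin down $\yy(f)$. Because $\Gamma$ is a tree oriented away from $v_{\id}$, and the tree-distance from $v_{\id}$ to $v_w$ is the length of the reduced word $w$, the source vertex $u_0$ of $\partial f$ is precisely the vertex of $\partial f$ nearest to $v_{\id}$; by \cref{new-parabolic-prop} its group element $y:=\yy(f)$ is therefore the unique shortest representative of the rank-$2$ coset attached to $f$. By \cref{rem:cosets}, $y$ is trivial or ends in the generator $s_k\notin\{s_i,s_j\}$, so $\ell(ys_i)=\ell(ys_j)=\ell(y)+1$ and hence $y\alpha_i,y\alpha_j\in\Phi^+$. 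The two boundary edges leaving $u_0$ are the child edges $v_y\to v_{ys_i}$ and $v_y\to v_{ys_j}$, whose defining reflections are $ys_iy^{-1}$ and $ys_jy^{-1}$; thus $\{\beta_{-1},\beta_1\}=\{y\alpha_j,y\alpha_i\}=\yy(f)\{\alpha_i,\alpha_j\}$, and the vertex and edge descriptions of the first sentence follow because $y$ lies in the coset.

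For the collinearity and the ordering I would apply \cref{lem:collinear_order} directly. Let $p\in\partial\DD$ be the ideal vertex of $\cT$ attached to $f$; by \cref{new-parabolic-prop}, $p$ is the common point of every wall $\wh H_\beta$ with $\beta\in y\Phi_{ij}$, so each linear plane $H_\beta$ contains the point $p$ and, being a wall of $\cT$, avoids $O=v_{\id}$, which lies in the interior of $T_{\id}$. Taking $u=p$ and $\cA=\{H_\beta:\beta\in y\Phi_{ij}\}$ in \cref{lem:collinear_order} shows that the rescaled roots $\wh\beta=\wh{H_\beta^{\perp}}$ are collinear, lying on the line $\wh{p^{\perp}}=\wh H_p$, which by \cref{prop:affine_orthogonal_subspaces}(1) is the tangent line to $\partial\DD$ at $p$; this is the asserted line segment. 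The same lemma says the points $\wh\beta$ occur along this tangent line in the same order in which the walls $\wh H_\beta$ are cyclically arranged around $p$, read starting from the direction of $O$.

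It remains to read off that angular order from the face $f$. The walls through $p$ are dual to the boundary edges of $f$, and consecutive boundary triangles of the fan around $p$ share a wall, so traversing $\partial f$ from $u_{-\infty}$ to $u_{+\infty}$ sweeps the walls in index order $\dots,\wh H_{\beta_{-1}},\wh H_{\beta_1},\dots$. Moreover $u_0$ separates $v_{\id}$ from the remainder of $\partial f$ in the tree, so the ray from $p$ toward $O$ enters the source triangle $T_y$, which is bounded at $p$ by the two innermost walls $\wh H_{\beta_{-1}}$ and $\wh H_{\beta_1}$. Hence, starting the sweep at $O$ and proceeding by increasing index, one meets $\wh H_{\beta_1},\wh H_{\beta_2},\dots$ accumulating toward one tangent direction and then, after the half-turn outside $\DD$ that contains no walls, $\dots,\wh H_{\beta_{-2}},\wh H_{\beta_{-1}}$ returning to $O$. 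By \cref{lem:collinear_order} this is exactly the order $\wh\beta_{-1},\wh\beta_{-2},\dots,\wh\beta_2,\wh\beta_1$ of the points on the tangent line. Finally, the relatively simple roots of $y\Phi_{ij}$ are by definition the extreme vectors of its positive part, which are the two outermost among these collinear positive rescaled roots, namely $\beta_{-1}$ and $\beta_1$; combined with the first paragraph this gives $\{\beta_{-1},\beta_1\}=\yy(f)\{\alpha_i,\alpha_j\}$.

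The step I expect to be most delicate is the angular bookkeeping: verifying that the cyclic arrangement of walls around $p$ really matches the boundary traversal of $f$ and, in particular, that the direction toward $O$ falls between the two relatively simple walls rather than among the high-index walls that accumulate toward the tangent point $p$. Once that is secured, everything else is a direct translation through the orthogonality lemmas already established.
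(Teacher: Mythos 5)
Your proof is correct and follows essentially the same route as the paper, which simply asserts that the proposition "follows from \cref{lem:collinear_order}"; your argument is a careful execution of exactly that strategy, applying \cref{lem:collinear_order} to the walls through the ideal vertex of $f$ and using the fan structure around that vertex (plus the fact that no wall of $\partial f$ separates $O$ from the source triangle) to read off the cyclic order. The details you supply --- identifying $\yy(f)$ as the minimal coset representative, deducing $\{\beta_{-1},\beta_1\}=\yy(f)\{\alpha_i,\alpha_j\}$, and the angular bookkeeping placing the direction of $O$ between $\wh H_{\beta_{-1}}$ and $\wh H_{\beta_1}$ --- are all sound and fill in precisely what the paper leaves implicit.
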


\begin{proposition}\label{prop:rank2_parabolics}
    Every positive root $\beta$ is in precisely two rank 2 parabolic subsystems. 
    If $\beta\notin\Delta$, then $\beta$ is relatively simple in exactly one of the rank 2 parabolic subsystems that contain it.
\end{proposition}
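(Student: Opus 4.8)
The plan is to translate the statement about positive roots into the combinatorial geometry of the Cayley graph $\Gamma$ and the Farey tessellation, using the dictionary established in \cref{new-parabolic-prop,parabolic-geometry}. Recall that each positive root $\beta$ corresponds to an edge $e_\beta$ of $\Gamma$. By \cref{new-parabolic-prop}, the rank $2$ parabolic subsystems are in bijection with the faces of $\Gamma$ (equivalently, the ideal vertices of $\cT$), and the edges lying in a given subsystem $\yy(f)\Phi_{ij}$ are exactly the edges on the boundary $\partial f$. So the first claim, that $\beta$ lies in precisely two rank $2$ parabolic subsystems, becomes the assertion that the edge $e_\beta$ lies on the boundary of precisely two faces of $\Gamma$.

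For this first part, I would argue as follows. The graph $\Gamma$ is embedded in the disk $\DD$ as the dual of the Farey tessellation, and since $\cT$ tiles $\DD$ by ideal triangles, the edge $e_\beta$ is dual to a hyperplane $\wh H_\beta$ of $\cT$. This hyperplane separates two adjacent ideal triangles, and each of its two ideal endpoints is a vertex of $\cT$. By \cref{new-parabolic-prop}, each such ideal vertex corresponds to a face of $\Gamma$, and $e_\beta$ lies on the boundary of exactly these two faces (the two faces of $\Gamma$ meeting along the edge $e_\beta$). The two ideal endpoints of $\wh H_\beta$ are distinct, so these are two distinct faces, giving exactly two rank $2$ parabolic subsystems containing $\beta$. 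The key geometric input is simply that every edge of a planar tree separates the plane locally into exactly two sides, which here are two distinct faces because the two ideal endpoints of the dual hyperplane are distinct points of $\partial\DD$.

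For the second claim, suppose $\beta\notin\Delta$, and let $f,f'$ be the two faces whose boundaries contain $e_\beta$, corresponding to subsystems $\Phi'=\yy(f)\Phi_{ij}$ and $\Phi''=\yy(f')\Phi_{i'j'}$. By \cref{parabolic-geometry}, the relatively simple roots of $\Phi'$ are indexed by the two edges $e_{\beta_{-1}}, e_{\beta_1}$ emanating from the source vertex $u_0 = v_{\yy(f)}$ of $\partial f$, and similarly for $f'$. I would show that $\beta$ is relatively simple in exactly one of $\Phi',\Phi''$ by relating relative simplicity to the orientation of $\Gamma$ (edges oriented away from $v_\id$). Concretely, $e_\beta$ is a directed edge, say $v_w \to v_{ws_k}$; the relatively simple edges of a face $f$ are precisely the two edges incident to its source vertex $\yy(f)$, i.e., the two edges of $\partial f$ that are oriented \emph{outward} from that source. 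The claim is that $e_\beta$ is one of the two source-edges for exactly one of $f,f'$. Since $\beta \neq \alpha_i$, the edge $e_\beta$ is not incident to $v_\id$, so its tail vertex $v_w$ (with $w\neq\id$) is the source vertex of exactly one of the two faces meeting along $e_\beta$ — namely the unique face whose source is $v_w$ rather than a vertex further from $v_\id$. I would make this precise by observing that each non-identity vertex $v_w$ is the source of exactly one face (the face "just past" $v_w$ away from the root), and that $e_\beta$ points away from this source vertex in that face but points toward (i.e., away from its own source which lies elsewhere) in the other face.

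The main obstacle will be making the orientation argument of the second part fully rigorous, i.e., verifying that each non-identity vertex $v_w$ is the source vertex of exactly one face of $\Gamma$ and correctly matching this with the notion of relative simplicity from \cref{def:linear-subs,parabolic-geometry}. I expect this to reduce to a careful local analysis at the tail vertex of $e_\beta$: of the two faces adjacent to $e_\beta$, in one of them the tail $v_w$ is the source $u_0$ (so $e_\beta$ is a source-edge, hence $\beta$ is relatively simple by \cref{parabolic-geometry}), while in the other the edge $e_\beta$ is traversed with $v_w$ playing the role of some $u_j$ with $j\neq 0$ along $\partial f'$ (so $\beta$ is not relatively simple there). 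The cleanest route is probably to use \cref{parabolic-geometry} directly, identifying which of the ordered roots $\wh\beta_j$ along each face's line segment corresponds to $\beta$ and checking that $\beta$ sits in the extreme (relatively simple) slot $j=\pm1$ for exactly one face.
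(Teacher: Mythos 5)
Your proposal is correct and follows essentially the same route as the paper's proof: the first claim via the bijection of \cref{new-parabolic-prop} between faces of $\Gamma$ and rank $2$ parabolic subsystems (each edge bordering exactly two faces), and the second claim via the local orientation analysis at the tail vertex of $e_\beta$, which has indegree $1$ since $\beta\notin\Delta$ and is therefore the source vertex of exactly one of the two adjacent faces, combined with the identification in \cref{parabolic-geometry} of relatively simple roots with source-edges. The details you defer (matching $\beta$ to the slot $j=\pm1$ in exactly one face) work out exactly as you anticipate.
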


\begin{proof}
    The faces of the Cayley graph $\Gamma$ correspond to the rank 2 parabolic subsystems of $\Phi$ by \cref{new-parabolic-prop}.
    Since each positive root $\beta$ labels exactly one edge of the Cayley graph, the two faces containing $e_{\beta}$ correspond to the only rank 2 parabolic subsystems that contain $\beta$.
    This proves the first claim.
    
    For the second claim, let $\beta$ be a positive root that is not simple. Let $u$ and $v$ be the endpoints of $e_\beta$, with $e_\beta$ directed $u \to v$. Since $\beta$ is not a simple root, $u$ is not the identity, so $u$ has indegree $1$. Let the other edges incident to $u$ be $x \to u$ and $u \to y$. Let $f_1$ be the face between $x \to u$ and $u \to v$, and let $f_2$ be the face between $u \to v$ and $u \to y$. Then $v$ is the source vertex of $f_2$, but not of $f_1$, so, by \cref{new-parabolic-prop}, $\beta$ is relatively simple in the subsystem corresponding to $f_2$ but not~$f_1$. 
\end{proof}

\begin{remark}
In general Coxeter groups, the positive roots correspond to
hyperplanes, and the rank~$1$ parabolic cosets correspond to edges in
the Cayley graph, but the map from the latter to the former is
many-to-one since hyperplanes cross many edges.  For $U_3$, however,
each hyperplane crosses a unique edge, and all four sets are in
one-to-one correspondence, analogous to \cref{new-parabolic-prop}.
\end{remark}

For inductive proofs, we use the following statistics on positive roots. Note that the typical definition of depth for arbitrary Coxeter groups found in \cite{bjorner:2005Coxbook} is phrased differently from our definition. However, the definition we give is equivalent for $U_3$. The definition we present here is better suited given the setup we have provided so far. 

\begin{definition}[Depth and Height]
The \dfn{depth} of a positive root $\beta$ is the graph-theoretic distance in $\Gamma$ between $v_\id$ and the endpoint of $e_\beta$ that is farther from $v_\id$. We similarly define the \dfn{depth} of the edge $e_\beta$ to be the depth of $\beta$. Because $\Delta$ is a basis of $V$, there are unique nonnegative real numbers $x,y,z$ such that $\beta=x\alpha_1+y\alpha_2+z\alpha_3$. The \dfn{height} of $\beta$, denoted $\height{\beta}$, is $x+y+z$. 
\end{definition}

\section{Inversion Sets and Notions of Convexity}\label{sec:invert-convex}

This section recalls Dyer's notion of extended
weak order from \cite{dyer:2019weak}, along with related concepts.  
We begin with inversion sets and weak order.

\begin{definition}[Inversion Sets and Weak Order]\label{def:weak-order}
  A positive root $\beta\in\Phi^+$ is an \dfn{inversion} of an element
  $w\in U_3$ if $\beta \in w\cdot \Phi^-$ or, equivalently, if
  $w^{-1}\cdot \beta\in\Phi^-$.  
  Let $\Inv(w)$ denote the set of
  inversions of $w$.  The \dfn{weak order} is the partial order on $U_3$ defined by containment of inversion
  sets: for $w, w'\in U_3$, we have $w \leq w'$ if and only if
  $\Inv(w)\subseteq\Inv(w')$.
\end{definition}

\begin{remark}[Inversions and Tree Paths]\label{rem:geodesics} 
  A root $\beta\in\Phi^+$ is in $\Inv(w)$ if and only if the edge $e_\beta$
  lies in the unique non-backtracking path through the Cayley graph from $v_\id$ to $v_w$, and this occurs if and only if
  the line $\wh H_\beta$ separates
  the center of $T_\id$ from the center of $T_w$.  As a consequence,
  the oriented Cayley graph $\Gamma$ is the Hasse diagram of the weak order. See \cref{fig:farey-cayley}. 
\end{remark}

\begin{remark}[Root Subsets and Colored Edges]\label{rem:colors}
  We represent a bipartition $R \sqcup B = \Phi^+$ (and the corresponding bipartition $\wh R\sqcup\wh B=\wh\Phi^+$) by bicoloring the corresponding edges of $\Cay(U_3,S)$. Our convention is that edges corresponding to roots in $R$ are
  \textcolor{red}{red}, while edges corresponding to roots in $B$ are \textcolor{blue}{blue}.  See
  \cref{fig:finite}.
\end{remark}

\begin{definition}[Separable]
  A subset $R \subseteq \Phi^+$ is \dfn{separable} if there is a
  linear hyperplane $H$ in $V$ with $H \cap \Phi = \varnothing$ such that $R$ and $\Phi^+\setminus R$ lie on opposite sides of~$H$.  
\end{definition} 

\begin{remark}
For each $w\in U_3$, the inversion set $\Inv(w)$ is separable. 
Namely, the hyperplane $w\cdot \AA_0$ separates $\Inv(w)$ and $\Phi^+\setminus \Inv(w)$.
\end{remark}

The following definition appears in the work of Hohlweg and Labb\'e \cite{HohlwegLabbe2016} (where it is called \emph{separable}). 

\begin{definition}[Weakly Separable]\label{def:weakly-sep}
Let $R \subseteq \Phi^+$, and let $B=\Phi^+\setminus R$. We say $R$ is \dfn{weakly separable}
  if $\spange R\cap\spange B=\{0\}$. Equivalently, $R$ is weakly separable if the convex hulls of $\wh R$ and $\wh B$ are disjoint. 
\end{definition}

Note that separable sets are weakly
  separable.

\begin{definition}[Biclosed Sets and Extended Weak Order]\label{def:biclosed}
  A set $R\subseteq\Phi^+$ is \dfn{closed} if for all $\alpha,\beta\in R$, the set $\spange \{\alpha,\beta\}\cap \Phi^+$ is contained in $R$. We say $R$ is \dfn{biclosed} if $R$ and $\Phi^+\setminus R$ are both closed. The 
  \dfn{extended weak order} of $U_3$ is the collection of biclosed subsets of $\Phi^+$, partially
  ordered by containment.  
\end{definition} 
Since the convexity condition defining
  weakly separable is stronger than that defining biclosed, all
  weakly separable sets are biclosed. To summarize, inversion sets and their complements are separable,
separable sets are weakly separable, and weakly separable sets are
biclosed.  In the other direction, one can show that the only finite
biclosed sets are inversion sets \cite[Lemma 4.1]{dyer:2019weak}, which shows that the
weak order embeds as an order ideal of the extended weak order. 

Let $\Phi'\subseteq \Phi$ be a rank 2 linear subsystem. A set $R'\subseteq\Phi'$ is \dfn{closed in $\Phi'^+$} if for all $\alpha,\beta\in R'$, the set $\spange\{\alpha,\beta\}\cap\Phi'^+$ is contained in $R'$. The set $R'$ is \dfn{biclosed in $\Phi'^+$} if both $R'$ and $\Phi'^+\setminus R'$ are closed in $\Phi'^+$. 
    
\begin{definition}[Parabolic Biclosed Sets]\label{def:parabolic_biclosed}
    A set $R\subseteq\Phi^+$ is \dfn{parabolic biclosed} if $R\cap \Phi'$ is biclosed in $\Phi'^+$ for every rank 2 parabolic subsystem $\Phi'\subseteq \Phi$.  
\end{definition} 

Every biclosed set is parabolic biclosed because intersecting a globally closed and coclosed subset of $\Phi^+$ with a rank 2 parabolic subsystem preserves closedness and coclosedness inside that subsystem. The converse need not hold; parabolic biclosedness only tests the rank 2 parabolic subsystems, whereas global biclosedness also sees rank 2 linear subsystems that are not parabolic. \cref{fig:parabolicnotbiclosed} gives examples of parabolic biclosed sets with the same associated pair of snakes, only one of which is globally biclosed. We introduce this weaker condition because the construction of snakes in \cref{sec:parabolic} uses only color changes along faces of the Cayley graph, and these faces correspond precisely to rank 2 parabolic subsystems. The stronger global biclosed hypothesis is used only later in \cref{sec:biclosed} to prove weak separability. 

\begin{remark}
For each set $X\subseteq V$, we have $(\spange X)\cap\AA=\conv(\wh X)$. Therefore, one can reinterpret \cref{def:biclosed,def:parabolic_biclosed} in the language of rescaled roots and convex hulls. For example, a set $R\subseteq\Phi^+$ is closed if and only if for all $\alpha,\beta\in R$, the set $\conv\{\wh\alpha,\wh\beta\}\cap\wh\Phi^+$ is contained in $\wh R$. 
\end{remark} 

A hyperplane $H\subseteq V$ is a \dfn{weakly separating hyperplane}
  for a set $R\subseteq\Phi^+$ if $R\setminus H$ and ${\Phi^+\setminus (R\cup H)}$ are
  contained in opposite sides of $H$. Every weakly separable set has a
  weakly separating hyperplane by the hyperplane separation theorem
  \cite[Theorem 3.3.9]{StoerWitzgall} (see also the discussions in \cite{barkley:2023affine,barkley:2024combinatorial}). Because $V$ is
  3-dimensional, it follows from \cite[Lemma
    5.1]{barkley:2024combinatorial} that a set $R\subseteq \Phi^+$ is
  weakly separable if and only if there exists a weakly separating
  hyperplane $H$ for $R$ such that $H\cap R$ is biclosed in $H\cap
  \Phi^+$. In particular, if $R$ is biclosed, then $R$ is weakly
  separable if and only if there is a weakly separating hyperplane for
  $R$.

\begin{remark}[Dependence on Realization]\label{rem:realizations}
Deformations of the bilinear form $\langle-,-\rangle$ with a compatible deformation of the $U_3$ action on $V$ lead to deformations of the root system $\Phi$. \emph{A priori}, such a deformation may change which subsets of $\Phi^+$ are biclosed or weakly separable. For instance, \cite[Section 6.1]{DyerWang2021} constructs a deformation of the root system of a rank 4 Coxeter group that changes the weakly separable sets. However, the main result of \cite{DyerWang2021} shows that weakly separable sets of rank 3 root systems are invariant under deformations, and hence are an invariant of the Coxeter group in that case. 
For every Coxeter group, Dyer remarked that the property of a set of roots being biclosed is an invariant of the Coxeter group (see \cite[Section 11.2]{dyer:2019weak}). 
Hence the question of comparing biclosed sets with weakly separable sets for the rank 3 root system $\Phi$ is really a Coxeter-theoretic question about $U_3$.
\end{remark}

It is known that the poset of weakly separable sets of positive roots under containment is a lattice \cite[Lemma~2.33, Corollary~2.36]{LabbeThesis}. In general, an analogous statement holds for all root systems of rank $3$, but it can fail for root systems of rank at least $4$.

There are root systems of rank $4$ in which not all biclosed sets
are weakly separable. However, Barkley and Speyer conjectured that biclosed
sets for rank~$3$ Coxeter groups are always weakly separable \cite[Conjecture~1.4]{barkley:2023affine}. (By \Cref{rem:realizations}, this conjecture does not depend on a choice of root system for the group.) This
conjecture implies Dyer's conjecture for rank~$3$ Coxeter groups. It
is this conjecture of Barkley and Speyer that we will resolve for the
universal rank~$3$ Coxeter group.

We end this section with a collection of geometric statements about the root system $\Phi$ that will be used in the proofs of the main theorems.
We begin with a characterization of rank~$2$ parabolic subsystems of $\Phi$.

\begin{proposition}\label{prop:rank2_subsystems}
    Every rank~$2$ linear subsystem of $\Phi$ is infinite. Moreover, a rank~$2$ linear subsystem is parabolic if and only if it is affine.
\end{proposition}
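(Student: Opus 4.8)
The plan is to reduce the entire statement to the signature of the restricted form $\langle-,-\rangle|_H$, and to read off that signature from the dynamical type of a single product of two reflections, exploiting that $U_3$ acts as a discrete group of isometries on the Klein disk $\DD$.

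\textbf{Setup.} Let $\Phi'=H\cap\Phi$ be a rank $2$ linear subsystem and let $\beta_1,\beta_2$ be its relatively simple roots (the extreme vectors of $\Phi'^+$ from \cref{def:linear-subs}), which form a basis of $H$. Every root has norm $1$, so the Gram matrix of $\langle-,-\rangle|_H$ in the basis $\{\beta_1,\beta_2\}$ is $\left(\begin{smallmatrix}1&c\\c&1\end{smallmatrix}\right)$ with $c=\langle\beta_1,\beta_2\rangle$, and its determinant is $1-c^2$. Since the ambient form has signature $(2,1)$ and $H$ contains the positive vector $\beta_1$, the restriction $\langle-,-\rangle|_H$ is positive definite iff $c^2<1$, degenerate iff $c^2=1$, and of type $(1,1)$ iff $c^2>1$. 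Equivalently, setting $g=r_{\beta_1}r_{\beta_2}\in U_3$, one computes on $H$ that $\det g=1$ and $\operatorname{tr}g=4c^2-2$, so $g$ is an elliptic, parabolic, or hyperbolic isometry of $\DD$ according to whether $c^2<1$, $c^2=1$, or $c^2>1$.

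\textbf{Key step: ruling out $c^2<1$.} Both $r_{\beta_1}$ and $r_{\beta_2}$ fix the line $H^\perp$ pointwise, so $g$ fixes $H^\perp$; hence $g$ is elliptic iff $H^\perp$ is a negative-norm line, iff $H$ is positive definite, iff $c^2<1$. I claim $g$ is never elliptic. First, $g=\id$ would force $\beta_1=\pm\beta_2$, contradicting rank $2$. If $g$ had finite order $m\ge 2$ it would be a nontrivial torsion element of $U_3$; but $U_3$ is a free product of copies of $\ZZ/2\ZZ$, so by the Kurosh subgroup theorem every finite subgroup has order at most $2$. Thus $m\ge 3$ is impossible, and $m=2$ would make $r_{\beta_1},r_{\beta_2},g$ three pairwise-commuting involutions, which is impossible since distinct involutions in $U_3$ never commute (the centralizer of a conjugate of a generator is the order-$2$ group it generates). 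Because $U_3$ acts properly discontinuously on $\DD$, any elliptic element would have finite order; non-torsion therefore rules out elliptic altogether. Hence $c^2\ge 1$, the plane $H$ is never positive definite, and $g$ is parabolic or hyperbolic, so of infinite order. The $g$-orbit of $\beta_1$ then consists of infinitely many distinct roots of $\Phi'$, proving that $\Phi'$ is infinite. I expect this non-ellipticity argument to be the main obstacle; everything else follows from the elliptic/parabolic/hyperbolic trichotomy.

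\textbf{Parabolic iff affine.} A rank $2$ subsystem is affine precisely when $\langle-,-\rangle|_H$ is degenerate, i.e.\ $c^2=1$, i.e.\ $g$ is parabolic. If $\Phi'$ is a parabolic subsystem $w\cdot\Phi_{ij}$, then $\{\beta_1,\beta_2\}=w\{\alpha_i,\alpha_j\}$ by \cref{parabolic-geometry}, so $c=\langle w\alpha_i,w\alpha_j\rangle=\langle\alpha_i,\alpha_j\rangle=-1$ and $H$ is degenerate (consistent with \cref{lem:disk}); thus parabolic implies affine. Conversely, if $\Phi'$ is affine, then $g$ is a nontrivial parabolic element of $U_3$ fixing a unique ideal point $p\in\partial\DD$, namely the point at which $H$ is tangent to the isotropic cone. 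In the Fuchsian group $U_3$ every parabolic fixed point is a cusp of the Farey tessellation, i.e.\ an ideal vertex of $\cT$; by \cref{rem:stabilizers} this vertex is a rank $2$ parabolic coset $wU_{s_i,s_j}$, whose tangent plane at $p$ is $\Span\{w\alpha_i,w\alpha_j\}=H$. Therefore $\Phi'=H\cap\Phi=w\cdot\Phi_{ij}$ is parabolic, completing the equivalence.
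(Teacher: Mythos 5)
Your proof is correct, but it takes a genuinely different route from the paper's. The paper's proof is a one-step reduction plus plane geometry: conjugate the subsystem by an element of $U_3$ so that it contains a simple root $\alpha_1$; then the line $\wh{H}$ passes through the vertex $\alpha_1$ of $\conv(\Delta)$, hence must meet the inscribed disk $\DD$ (so the restricted form is never positive definite and the subsystem is infinite), and $\wh{H}$ is tangent to $\partial\DD$ exactly when it contains $\alpha_2$ or $\alpha_3$, i.e., exactly when the subsystem is standard parabolic. Your argument instead classifies the dynamical type of $g=r_{\beta_1}r_{\beta_2}$: the trace/determinant computation, the orbit argument for infinitude, and the elimination of the elliptic case via Kurosh plus centralizers of involutions are all sound, and the latter is a nice conceptual point (it is exactly the statement that the only torsion in $U_3$ consists of reflections, so $U_3$ has no elliptic elements). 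The trade-off is that you import two substantive facts from Fuchsian group theory that the paper never establishes: (i) proper discontinuity of the $U_3$-action on $\DD^{\circ}$, needed to conclude that an elliptic element has finite order, and (ii) the theorem that every parabolic fixed point of a geometrically finite Fuchsian group is a cusp, i.e., an ideal vertex of the tessellation, which carries your affine $\Rightarrow$ parabolic direction. Both are true and correctly applied, and (i) is morally contained in the paper's fundamental-domain discussion, but (ii) is real machinery; it can be avoided entirely by the paper's conjugation trick, since the tangent lines to the inscribed circle through a vertex of the triangle are precisely the two sides, so an affine subsystem containing $\alpha_1$ must be $\Phi_{12}$ or $\Phi_{13}$. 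Grafting that step into your argument would make it self-contained. One small imprecision: for parabolic $\Rightarrow$ affine you write $\{\beta_1,\beta_2\}=w\{\alpha_i,\alpha_j\}$ citing \cref{parabolic-geometry}, but that proposition gives this with $\yy(f)$ in place of $w$, and these may differ; the conclusion $c=-1$ is unaffected, and in any case this direction follows directly from \cref{lem:disk} together with $U_3$-invariance of the isotropic cone.
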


\begin{proof}
    Let $\Phi'$ be a rank~$2$ linear subsystem of $\Phi$.
    Choose $w\in U_3$ such that $w \cdot \Phi'$ contains a simple root, say $\alpha_1$, and let $\beta$ be some other positive root in $w \cdot \Phi'$.
    Since $w$ acts invertibly on $V$, if $w\cdot \Phi'$ is infinite, then so is $\Phi'$.

    Let $H$ be the plane spanned by $\alpha_1$ and $\beta$. 
    The line $\wh{H}$ intersects $\partial\ \conv(\Delta)$ at the vertex $\alpha_1$ and at the opposite side between $\alpha_2$ and $\alpha_3$. 
    Since $\partial \DD$ is the inscribed circle of that triangle, the line $\wh H$ meets $\partial \DD$ at one or two points. 

    The bilinear form associated to the root system $w\cdot \Phi$ is the restriction of $\langle -,-\rangle$ to $H$. 
    Since $H$ contains an isotropic vector, this root system is infinite.

    For the second statement, we observe that the line $\wh{H}$ is tangent to $\partial \DD$ if and only if it includes either $\alpha_2$ or $\alpha_3$.
    In this case, $w\cdot\Phi'$ is the standard parabolic subsystem generated by $\{\alpha_1,\alpha_2\}$ or $\{\alpha_1,\alpha_3\}$.
\end{proof}

\begin{corollary}\label{cor:rank2_subsystem}
    For any two distinct positive roots $\alpha,\beta\in\Phi$, the plane $\Span\{\alpha,\beta\}$ has a nonempty intersection with $\DD$.
\end{corollary}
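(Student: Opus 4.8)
The plan is to reduce the statement to the existence of a single nonzero vector of nonpositive norm inside the plane $H=\Span\{\alpha,\beta\}$, and then to rescale that vector into $\DD$. First I would check that $\alpha$ and $\beta$ are linearly independent: both are positive roots of norm $1$, so if $\beta=c\alpha$ with $c>0$, then $1=\qform(\beta)=c^2\qform(\alpha)=c^2$ forces $c=1$ and hence $\beta=\alpha$, contradicting distinctness. Thus $\{\alpha,\beta\}$ is a basis of the $2$-dimensional plane $H$, so $H$ is spanned by $H\cap\Phi$, and therefore $H\cap\Phi$ is a rank~$2$ linear subsystem. By \cref{prop:rank2_subsystems}, this subsystem is infinite.

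The key step is to produce a nonzero $\bv\in H$ with $\qform(\bv)\le 0$. I would argue that the restriction $\qform|_H$ cannot be positive definite: a rank~$2$ root system whose Gram form is positive definite is a finite (spherical) dihedral root system, contradicting the infiniteness just obtained. Equivalently, one can extract such a vector from the computation behind \cref{prop:rank2_subsystems}: after transporting $\alpha$ to a simple root by an element of $O(2,1)$ that preserves both $\Phi$ and $\DD$ (a product of an element of $U_3$ and a coordinate permutation), the relevant line meets $\partial\DD$, exhibiting an isotropic vector in the transported plane, which pulls back to a null vector in $H$. Since $H$ contains $\alpha$ with $\qform(\alpha)=1>0$, the form $\qform|_H$ has a positive direction; so once it is known not to be positive definite, it must attain a nonpositive value on some nonzero $\bv\in H$.

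Finally I would push $\bv$ into $\DD$. Because $\langle-,-\rangle$ is positive definite on $\AA_0$, every nonzero vector of $\AA_0$ has positive norm; as $\bv\ne 0$ and $\qform(\bv)\le 0$, this forces $\bv\notin\AA_0$, i.e.\ $\bv$ has nonzero coordinate sum $s$. Hence $\wh\bv\in\AA$ is defined, and since rescaling multiplies $\qform$ by $1/s^2>0$, we get $\qform(\wh\bv)\le 0$, so $\wh\bv\in\DD$. As $\wh\bv$ is a scalar multiple of $\bv$ it lies in $H$, whence $H\cap\DD\neq\varnothing$, as desired. The only genuine obstacle is the middle paragraph: ruling out that $\qform|_H$ is positive definite (supplied either by the infiniteness input of \cref{prop:rank2_subsystems} or by the explicit tangency computation behind it), together with the small boundary check that the resulting nonpositive-norm vector does not happen to lie in $\AA_0$ — which the positive-definiteness of $\AA_0$ resolves cleanly. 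Everything else is routine.
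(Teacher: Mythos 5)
Your proposal is correct, and your ``equivalent'' second justification is exactly the paper's own proof: the paper proves \cref{cor:rank2_subsystem} by noting that the proof of \cref{prop:rank2_subsystems} already exhibits an intersection of $\wh H$ with $\partial\DD$ when $\alpha$ is simple, and then transports the general case back using the fact that the $U_3$-action preserves the isotropic cone; the final step you do by hand (a nonzero vector with $\qform\le 0$ is not in $\AA_0$, so it rescales into $\DD$) is handled implicitly in the paper by the earlier observation that every isotropic line meets $\partial\DD$ in a unique point. Your primary route, however, is genuinely different in flavor: it uses only the \emph{statement} of \cref{prop:rank2_subsystems} (infiniteness of $H\cap\Phi$) together with the claim that an infinite rank~$2$ root system cannot have a positive definite Gram form. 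That claim is true here, but it is not purely formal: an infinite set of norm-one vectors in a positive definite plane sits inside a compact ellipse, so you also need discreteness of $\Phi$ in $V$ to reach a contradiction --- immediate because roots have integer coordinates (this is the same integrality used in the proof of \cref{lem:heights}), but you assert the classification fact without this. Since your fallback route is complete, this is a stylistic debt rather than a gap. What the two approaches buy: yours isolates a reusable principle (``a nonzero vector of nonpositive norm in $H$ forces $\wh H\cap\DD\neq\varnothing$'') and decouples the corollary from the specific triangle computation, while the paper's version is shorter because that computation hands it an isotropic vector directly. One trivial repair: in your independence check you should also dispose of $\beta=c\alpha$ with $c<0$, which is immediate since then $\beta=-\alpha$ would be a negative root.
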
 

\begin{proof}
    The proof of \cref{prop:rank2_subsystems} shows that the given statement is true when $\alpha$ is a simple root. 
    Since the $U_3$ action preserves the isotropic cone, the general result follows.
\end{proof}

In the following lemma, we show that the height statistic on a root is inversely related to the distance of the corresponding rescaled root from the disk.
\cref{lem:rank2geom} then examines the behavior of the height statistic on rank 2 linear subsystems.

\begin{lemma}\label{lem:heightnorm}
    For any positive root $\beta$, we have
    \[ \height{\beta} = \qform(\wh{\beta})^{-\frac{1}{2}}. \] 
\end{lemma}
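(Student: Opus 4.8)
The plan is to express both sides of the identity in the coordinates $(x,y,z)$ with $\beta = x\alpha_1 + y\alpha_2 + z\alpha_3$, where $x,y,z \geq 0$ are the nonnegative reals guaranteed by the definition of height, and then verify the equation by direct computation. First I would record that $\height{\beta} = x+y+z$ by definition, and that $\wh\beta$ is the point $\frac{1}{x+y+z}(x,y,z)$, since $\wh\beta$ is the unique scalar multiple of $\beta$ lying on the affine plane $\AA = \{x+y+z=1\}$. This rescaling is well-defined precisely because $\beta$ is a positive root, so $x+y+z = \height{\beta} > 0$.

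The key step is then to compute $\qform(\wh\beta)$ using the homogeneity of the quadratic form. Since $\qform$ is homogeneous of degree $2$ and $\wh\beta = \frac{1}{\height{\beta}}\beta$, we have
\[
\qform(\wh\beta) = \frac{1}{\height{\beta}^2}\qform(\beta).
\]
Now $\beta$ is a root, so it has norm $1$, i.e.\ $\qform(\beta) = \langle \beta,\beta\rangle = 1$; this is the crucial input, and it holds because every root lies in the $U_3$-orbit of a simple root $\alpha_i$, each $\alpha_i$ satisfies $\qform(\alpha_i) = 1$ by the definition of the bilinear form, and the action preserves the form. Substituting gives $\qform(\wh\beta) = \height{\beta}^{-2}$, and taking the reciprocal square root of both sides (both quantities being positive) yields $\height{\beta} = \qform(\wh\beta)^{-1/2}$, as desired.

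I do not expect any genuine obstacle here; the lemma is a short computation resting on two facts already available, namely that roots have norm $1$ and that height is exactly the scaling factor relating $\beta$ to its rescaled image $\wh\beta \in \AA$. The only point requiring a word of care is the positivity needed to extract a well-defined (positive) square root: since $\beta \in \Phi^+$ we have $x+y+z > 0$, and since $\qform(\beta) = 1 > 0$ we indeed have $\qform(\wh\beta) > 0$, so the expression $\qform(\wh\beta)^{-1/2}$ is unambiguous and positive, matching the positive quantity $\height{\beta}$.
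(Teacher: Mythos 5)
Your proof is correct and follows essentially the same route as the paper's: write $\beta$ in $\Delta$-coordinates, identify $\height{\beta}$ as the rescaling factor taking $\beta$ to $\wh\beta\in\AA$, use the degree-$2$ homogeneity of $\qform$ together with $\qform(\beta)=1$, and take the positive square root. The only difference is that you additionally spell out why roots have norm $1$ (the $U_3$-action preserves the form and simple roots have norm $1$), which the paper takes as given.
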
 

\begin{proof}
    Let $\beta\in\Phi^+$.
    Then $\beta= c_1\alpha_1 + c_2\alpha_2 + c_3\alpha_3$ for some $c_1,c_2,c_3\geq 0$, so $\height{\beta} = c_1+c_2+c_3$.
    By definition, $\wh{\beta} = d\cdot \beta$ for some $d>0$ such that $\wh{\beta}\in\AA$.
    This means $d(c_1+c_2+c_3) = 1$.
    Using bilinearity, we find that $\qform(\wh{\beta}) = d^2 \cdot \qform(\beta)$.
    Since $\beta$ is a root, we have $\qform(\beta)=1$.
    Thus,
    \[ \height{\beta} = d^{-1} = \qform(\wh{\beta})^{-\frac{1}{2}}, \]
    as desired.
\end{proof}

\begin{lemma}\label{lem:rank2geom}
    Let $\Phi'=H\cap \Phi$ be a rank~$2$ linear subsystem containing positive roots $\beta,\gamma$. 
    \begin{enumerate}
        \item $\langle\beta,\gamma\rangle>0$ if and only if $\wh{\beta}$ and $\wh{\gamma}$ are on the same side of $\DD\cap\wh{H}$ along the line $\wh{H}$.
        \item If $\wh{\gamma}$ is between $\wh{\beta}$ and $\DD\cap\wh{H}$ along the line $\wh{H}$, then $\height{\beta}<\height{\gamma}$.
        \item If $\beta$ is relatively simple in $\Phi'$ and $\gamma$ is not relatively simple, then $\height{\beta}<\height{\gamma}$. 
    \end{enumerate}
\end{lemma}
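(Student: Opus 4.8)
The plan is to do everything on the line $\wh H = H\cap\AA$, using that the restriction of $\qform$ to this line is a single quadratic. Since $H$ contains two distinct positive roots, \cref{cor:rank2_subsystem} and the proof of \cref{prop:rank2_subsystems} show that $H$ meets the isotropic cone, so $\langle-,-\rangle$ restricts to a nondegenerate form of signature $(1,1)$ on $H$ and $\wh H$ meets $\partial\DD$ in one or two ideal points; write $\DD\cap\wh H$ for the resulting (possibly degenerate) chord. Choose a direction vector $\mathbf d\in\AA_0\cap H$ for $\wh H$; because $\langle-,-\rangle$ is positive definite on $\AA_0$, we have $\qform(\mathbf d)>0$. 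Taking the base point $m\in\wh H$ with $\langle m,\mathbf d\rangle=0$ and writing points of $\wh H$ as $m+\lambda\mathbf d$ gives the clean formula
\[ \qform(m+\lambda\mathbf d) = \qform(m) + \lambda^2\,\qform(\mathbf d), \]
an upward parabola in $\lambda$ whose roots $\pm\lambda_0$ (with $\lambda_0^2 = -\qform(m)/\qform(\mathbf d)\ge 0$) are the two ideal points and which is negative exactly inside $\DD$. The rescaled roots $\wh\beta,\wh\gamma$ have positive norm, so they sit at parameters with $|\lambda|>\lambda_0$.

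For parts (1) and (2) I would compute directly in this parametrization. Writing $\wh\beta = m+\lambda_\beta\mathbf d$ and $\wh\gamma = m+\lambda_\gamma\mathbf d$, bilinearity together with $\langle m,\mathbf d\rangle=0$ gives $\langle\wh\beta,\wh\gamma\rangle = \qform(m) + \lambda_\beta\lambda_\gamma\,\qform(\mathbf d)$. Since $\beta = \height{\beta}\,\wh\beta$ with $\height{\beta}>0$ (and likewise for $\gamma$), the sign of $\langle\beta,\gamma\rangle$ equals the sign of $\langle\wh\beta,\wh\gamma\rangle$. Now $\wh\beta,\wh\gamma$ lie on the same side of $\DD\cap\wh H$ exactly when $\lambda_\beta,\lambda_\gamma$ have the same sign with each exceeding $\lambda_0$ in absolute value, i.e.\ exactly when $\lambda_\beta\lambda_\gamma>\lambda_0^2 = -\qform(m)/\qform(\mathbf d)$; dividing by $\qform(\mathbf d)>0$, this is precisely the condition $\qform(m)+\lambda_\beta\lambda_\gamma\,\qform(\mathbf d)>0$, proving (1). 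For (2), the formula $\qform(\wh\beta) = \qform(m)+\lambda_\beta^2\,\qform(\mathbf d)$ is strictly increasing in $|\lambda_\beta|$; the hypothesis that $\wh\gamma$ lies between $\wh\beta$ and $\DD\cap\wh H$ means $|\lambda_\gamma|<|\lambda_\beta|$, so $\qform(\wh\gamma)<\qform(\wh\beta)$, and \cref{lem:heightnorm} (height equals $\qform(\wh{\,\cdot\,})^{-1/2}$, a decreasing function of the norm) converts this into $\height{\gamma}>\height{\beta}$.

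Part (3) is where the real work lies, because parts (1) and (2) only compare roots lying on a common ray of $\wh H\setminus(\DD\cap\wh H)$, whereas (3) must compare a relatively simple root against non-simple roots on the \emph{opposite} ray as well. The plan is to switch from the geometric parametrization to the functional $\height{-}$, which is linear on all of $V$ and positive on $\Phi^+$. By \cref{def:linear-subs} the two relatively simple roots $\beta_1,\beta_2$ are the extreme rays of the cone $\spange\Phi'^+$; they form a basis of $H$, and every positive root of $\Phi'$ is a nonnegative combination of them. I claim that every positive root $\gamma$ of $\Phi'$ that is not relatively simple is $\gamma = a\beta_1 + b\beta_2$ with $a,b\ge 1$. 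This should follow from the dihedral reflection structure of the rank~$2$ subsystem: the two shallowest non-simple roots are $r_{\beta_1}\beta_2 = k\beta_1+\beta_2$ and $r_{\beta_2}\beta_1 = \beta_1 + k\beta_2$ with $k=-2\langle\beta_1,\beta_2\rangle\ge 2$, and the Chebyshev-type recurrence for the remaining roots only increases the coefficients. Granting the claim, linearity of height gives $\height{\gamma} = a\,\height{\beta_1}+b\,\height{\beta_2} \ge \height{\beta_1}+\height{\beta_2} > \max(\height{\beta_1},\height{\beta_2})$, using $\height{\beta_i}>0$; in particular $\height{\gamma}>\height{\beta}$ for either relatively simple root $\beta$, as desired. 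The main obstacle is thus the coefficient bound $a,b\ge 1$, which I expect to prove cleanest by induction on depth using the reflection recurrence, rather than by attempting a purely geometric cross-ray comparison.
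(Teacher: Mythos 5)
Your parts (1) and (2) are correct and complete: placing the base point $m$ on $\wh H$ with $\langle m,\mathbf d\rangle=0$ reduces both statements to one-variable computations with the upward parabola $\qform(m)+\lambda^2\qform(\mathbf d)$, and this is a genuine (more computational) alternative to the paper, which proves (1) synthetically via tangent lines and \cref{prop:affine_orthogonal_subspaces} and proves (2) by viewing $\qform$ on $\AA$ as a paraboloid centered at $O$. One small misstatement: when $\wh H$ is tangent to $\partial\DD$ the restricted form on $H$ is degenerate, not of signature $(1,1)$, but your formulas already accommodate $\lambda_0=0$, so nothing breaks.

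Part (3) is where your proposal has a genuine gap, and it is larger than the one you flagged. You reduce (3) to the claim that every non-relatively-simple $\gamma\in\Phi'^+$ equals $a\beta_1+b\beta_2$ with $a,b\ge 1$, to be proved by ``induction on depth using the reflection recurrence.'' But that recurrence only generates the orbit of $\{\beta_1,\beta_2\}$ under the dihedral group $\langle r_{\beta_1},r_{\beta_2}\rangle$, whereas $\Phi'$ is \emph{defined} as $H\cap\Phi$: nothing in your argument rules out roots of $\Phi$ lying in $H$ that are not reachable by the recurrence. Norm considerations alone cannot exclude them: when $\langle\beta_1,\beta_2\rangle=-1$, every vector $a\beta_1+b\beta_2$ with $|a-b|=1$ has norm $1$, including vectors with $0<b<1$; what excludes such vectors from $\Phi$ is the root-system structure, not the quadratic form. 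So the induction needs, as a prior step, that $\Phi'^+$ is exactly the dihedral orbit of $\{\beta_1,\beta_2\}$ --- this is essentially Dyer's reflection subgroup theorem, or can be proved by a height-descent argument: for any $\gamma\in\Phi'^+$ some $\langle\beta_i,\gamma\rangle>0$ (otherwise $\qform(\gamma)=a\langle\beta_1,\gamma\rangle+b\langle\beta_2,\gamma\rangle\le 0$), and then $r_{\beta_i}\gamma$ is again in $\Phi'^+$ (by extremality of $\beta_1,\beta_2$ and $\Phi=\Phi^+\sqcup\Phi^-$) with strictly smaller height, so \cref{lem:heights} forces the descent to terminate at $\beta_1$ or $\beta_2$. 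You also assert $-2\langle\beta_1,\beta_2\rangle\ge 2$ without proof; the sign comes from your part (1), and the magnitude needs \cref{cor:rank2_subsystem} (the Gram determinant $1-\langle\beta_1,\beta_2\rangle^2$ cannot be positive since $H$ meets $\DD$).

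For comparison, the paper's proof of (3) sidesteps the coefficient bound and the orbit question entirely: given $\gamma$ on the opposite ray from $\beta$, it applies a \emph{single} reflection in the other relatively simple root $\alpha$; since $\langle\alpha,\gamma\rangle>0$, the root $r_\alpha(\gamma)=\gamma-2\langle\alpha,\gamma\rangle\alpha$ has strictly smaller height and lies on $\beta$'s side, so parts (1) and (2) finish. The only structural input there is that $r_\alpha$ permutes $\Phi'^+\setminus\{\alpha\}$, which follows quickly from extremality. Your approach can be completed, but only after adding the descent (or Dyer) step above, at which point the single-reflection trick is substantially shorter.
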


\begin{proof}
    (1) Consider the open half-space $\cH=\{\bx : \langle\beta,\bx\rangle>0\}$ containing $\beta$.
    By \cref{cor:rank2_subsystem}, the line $\wh{H}$ has a nonempty intersection with the disk $\DD$. 
    The rescaled root $\wh{\beta}$ is contained in two lines tangent to $\partial \DD$, say at points $p,q$.
    By \cref{prop:affine_orthogonal_subspaces}, the lines $\wh{H}_p,\wh{H}_q$ meet at $\wh{\beta}$, and $\wh{H}_{\beta}$ is the line containing $p$ and $q$.
    Since $\wh{H}$ has nonempty intersection with $\DD$, the line $\wh{H}$ intersects $\wh{H}_{\beta}$ (weakly) between $p$ and $q$ in $\DD$.
    Thus, $\gamma\in\cH$ if and only if $\wh{\gamma}$ is on the same side of $\DD\cap \wh{H}$ as $\wh{\beta}$.

    (2) Suppose $\wh{\gamma}$ is between $\wh{\beta}$ and $\DD\cap\wh{H}$ along the line $\wh{H}$.
    The graph of $\qform:\AA\rightarrow\RR$ is a paraboloid whose level sets are concentric circles centered at $O$.
    Since $\wh{\gamma}$ is closer to the center $O$ than $\wh{\beta}$, we have $\qform(\wh{\beta})<\qform(\wh{\gamma})$.
    Therefore, by \cref{lem:heightnorm}, we have 
    \[ \height{\beta}=\qform(\wh{\beta})^{-\frac{1}{2}}>\qform(\wh{\gamma})^{-\frac{1}{2}} = \height{\gamma}. \qedhere\] 

    (3) We now assume $\beta$ is relatively simple and $\gamma$ is not.
    If $\langle \beta,\gamma\rangle>0$, then ${\height{\beta}<\height{\gamma}}$ by (1) and (2).
 
    Suppose instead $\langle \beta,\gamma\rangle<0$, and let $\alpha\in\Phi'$ be the other relatively simple root of $\Phi'$. 
    The reflection $r_{\alpha}$ permutes the set $\Phi'^+\setminus\{\alpha\}$.
    We have $\langle \alpha,\gamma\rangle>0$, so that $\langle \beta, r_{\alpha}(\gamma)\rangle>0$. 
    Again, from (1) and (2) we have $\height{\beta}\leq \height{r_{\alpha}(\gamma)}$.
    On the other hand, by definition
    \[ r_{\alpha}(\gamma) = \gamma - 2\langle \alpha,\gamma\rangle \alpha, \]
    so $\height{r_{\alpha}(\gamma)}<\height{\gamma}$.
\end{proof}

The remaining statements concern the location of accumulation points of rescaled roots, which are also known as limit roots.

\begin{lemma}[{\cite[Example 2.3]{HohlwegLabbeRipoll}}]\label{lem:limitpointsrank2}
  Let $\Phi'\subseteq \Phi$ be a rank 2 linear subsystem. Let $H$ be
  the hyperplane spanned by $\Phi'$. Each intersection point
  of $\wh H$ with $\partial\DD$ is an accumulation point of ${\widehat{\Phi}'}{}^+$.  
\end{lemma}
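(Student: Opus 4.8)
The plan is to establish the two inclusions separately: that every accumulation point of $\wh{\Phi'}^+$ lies in $\wh H\cap\partial\DD$, and conversely that every point of $\wh H\cap\partial\DD$ is such an accumulation point. Throughout I would use three facts: $\Phi'$ is infinite by \cref{prop:rank2_subsystems}; the line $\wh H$ meets $\partial\DD$ in exactly one point (the tangent, equivalently affine, case) or in two points (the secant case); and every root has nonnegative integer coordinates in the basis $\Delta$, since the generating matrices of \cref{def:coordinates} are integral. The last fact gives the crucial finiteness statement that only finitely many positive roots have height below any fixed bound.

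For the first inclusion, I would observe that $\wh{\Phi'}^+$ is an infinite subset of the compact triangle $\conv(\Delta)$ lying on the line $\wh H$, so it has at least one accumulation point, and every accumulation point lies on $\wh H$. Given such a point $u$, I would show that $\qform(u)=0$: if instead $\qform(u)>0$, then by continuity $\qform$ is bounded below by a positive constant on a neighborhood of $u$, so \cref{lem:heightnorm} forces the rescaled roots in that neighborhood to have bounded height and hence to be finite in number, contradicting that $u$ is an accumulation point. Thus $\qform(u)=0$, i.e. $u\in\partial\DD$, and so $u\in\wh H\cap\partial\DD$.

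For the converse I would split into the two cases. In the tangent case $\wh H\cap\partial\DD$ is a single point $p$; since accumulation points exist and all of them lie in $\{p\}$, the point $p$ must be an accumulation point, and we are done. In the secant case, write the two components of $(\wh H\setminus\DD)$ as open rays with ideal endpoints $p$ and $q$. Every rescaled positive root lies on one of these rays, because roots lie strictly outside $\DD$; this partitions the positive roots of $\Phi'$ into a set $P$ landing on the $p$-ray and a set $Q$ landing on the $q$-ray. The key point is that, for a relatively simple root $\beta$ of $\Phi'$, the reflection $r_\beta$ permutes the positive roots of $\Phi'$ other than $\beta$ and interchanges the two rays: since $\beta$ has positive norm, neither eigenline of the restriction of $r_\beta$ to $H$ is isotropic, so $r_\beta$ must swap the two isotropic lines of $H$, hence swap $p$ and $q$ and therefore the two rays. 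Being an involution that omits $\beta$ from its image, $r_\beta$ then restricts to a bijection between $P\setminus\{\beta\}$ and $Q$; since $P\sqcup Q$ is infinite, both $P$ and $Q$ are infinite. Each ray therefore carries infinitely many rescaled roots inside a bounded segment whose only possible accumulation point is its ideal endpoint by the first inclusion, so both $p$ and $q$ are accumulation points.

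I expect the main obstacle to be the secant case of the converse: a priori all of the rescaled roots could crowd toward a single ideal point, and it is exactly the ray-swapping property of the simple reflection $r_\beta$ that rules this out and sends infinitely many roots toward each endpoint. The only other step requiring care is the finiteness of bounded-height roots invoked in the first inclusion, which rests on the integrality of root coordinates; granting these two ingredients, the remaining steps are routine.
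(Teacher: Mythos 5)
Your proof is correct in its essentials, but it is worth noting that the paper does not actually prove this lemma at all: it is imported wholesale from the literature (Hohlweg--Labb\'e--Ripoll, where it is an instance of the general theory of limit roots, which says that the limit roots of an infinite rank~$2$ subsystem are exactly the isotropic directions of its span). So your argument is necessarily a different route --- a self-contained, elementary proof using only the paper's own toolkit, which is a genuine contribution. Your two key ideas are sound: (i) the height--norm duality of \cref{lem:heightnorm} together with the finiteness of bounded-height roots (the first claim of \cref{lem:heights}, which you correctly re-derive from integrality of the root coordinates) forces every accumulation point of $\wh{\Phi}'{}^+$ to be isotropic, hence in $\wh H\cap\partial\DD$; and (ii) in the secant case, the reflection $r_\beta$ in a relatively simple root swaps the two ideal endpoints and hence the two rays, so both rays carry infinitely many rescaled roots and each ideal endpoint is an accumulation point. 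Two details deserve to be made explicit if you write this up. First, the non-isotropy of the $+1$-eigenline of $r_\beta|_H$ is not purely a consequence of $\qform(\beta)>0$; it also uses that the form restricted to $H$ is nondegenerate of signature $(1,1)$ in the secant case (if the eigenline $\beta^\perp\cap H$ were isotropic, the form on $H$ would be degenerate). Second, ``$r_\beta$ swaps the isotropic lines, hence swaps $p$ and $q$ and the two rays'' has a hidden sign issue: writing $r_\beta(p)=\lambda q$ and $r_\beta(q)=\mu p$, you need $\lambda,\mu>0$ to conclude that roots with negative $q$-coordinate map to roots with negative $p$-coordinate. This follows because $U_3$ preserves each nappe of the cone $\{\qform\le 0\}$ (each generator fixes a negative-norm vector), which is exactly what makes the projective action on $\DD$ of \cref{rem:proj_action} well defined; citing that remark closes the gap. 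With those two clarifications, your argument is complete and has the advantage of making the paper independent of the external reference at this point.
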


\begin{lemma}\label{lem:limitpoints}
  A point in $\AA$ is an accumulation point of $\wh\Phi^+$ if and only if it is in $\partial\DD$.
\end{lemma}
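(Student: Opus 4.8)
The plan is to prove the two inclusions separately, using the height statistic for one direction and \cref{lem:limitpointsrank2} together with a density argument for the other.

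For the ``only if'' direction, I would first note that the generator matrices of \cref{def:coordinates} have integer entries and the simple roots are the standard basis vectors, so every root has integer coordinates; hence every positive root $\beta = x\alpha_1+y\alpha_2+z\alpha_3$ has $x,y,z\in\ZZ_{\geq 0}$. Since $\height{\beta}=x+y+z$, there are only finitely many positive roots of any bounded height. Now let $p\in\AA$ be an accumulation point of $\wh\Phi^+$. The map $\beta\mapsto\wh\beta$ is injective (all roots have norm $1$, so no two positive roots are positive scalar multiples of each other), so there is a sequence of distinct positive roots $\beta_n$ with $\wh{\beta_n}\to p$. By the finiteness just observed, after passing to a subsequence we may assume $\height{\beta_n}\to\infty$. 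By \cref{lem:heightnorm}, $\qform(\wh{\beta_n})=\height{\beta_n}^{-2}\to 0$, so continuity of $\qform$ gives $\qform(p)=0$, i.e.\ $p\in\partial\DD$.

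For the ``if'' direction, I would use that the set of accumulation points of any subset of $\AA$ is closed. By \cref{lem:limitpointsrank2}, every intersection point of a rank~$2$ subsystem hyperplane $\wh H$ with $\partial\DD$ is an accumulation point of $\wh\Phi^+$; these are exactly the ideal points of the Farey tessellation $\cT$ (the $U_3$-orbit of the ideal vertices of $T_{\id}$). Thus it suffices to show that the ideal points are dense in $\partial\DD$, for then the closed accumulation set contains their closure, namely all of $\partial\DD$; combined with the previous paragraph this yields equality.

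To establish density, fix $p\in\partial\DD$ and take the ray from the center $O$ to $p$. Only finitely many edges of $\Gamma$ lie within any bounded distance of $v_\id$ in the tree, so this ray crosses an infinite sequence of edges $e_{\beta_k}$, and the corresponding roots $\beta_k$ are distinct; hence, as above, their heights are unbounded and, along a subsequence, $\qform(\wh{\beta_k})=\height{\beta_k}^{-2}\to 0$. The chord $\wh H_{\beta_k}\cap\DD$ is the polar of the pole $\wh{\beta_k}$ (\cref{prop:affine_orthogonal_subspaces}), and as $\wh{\beta_k}$ approaches $\partial\DD$ this chord shrinks; since it also crosses the ray at a point tending to $p$, its two ideal endpoints converge to $p$. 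These endpoints are ideal points of $\cT$, so the ideal points are dense in $\partial\DD$, completing the proof. I expect the quantitative shrinking of the polar chords—verifying that the nested chords crossed by the ray contract to the single point $p$ rather than to a nondegenerate arc—to be the main technical point to nail down.
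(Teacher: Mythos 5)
Your proposal takes a genuinely different route from the paper: the paper disposes of this lemma in one line by citing the theory of limit roots for hyperbolic root systems (Theorem 4.4 of Dyer--Hohlweg--Ripoll), whereas you give a self-contained elementary argument. Your ``only if'' direction is complete and correct: roots have nonnegative integer coordinates, so only finitely many positive roots have bounded height; rescaling is injective on $\Phi^+$; hence the heights along any sequence of distinct rescaled roots tend to infinity, and \cref{lem:heightnorm} together with continuity of $\qform$ forces $\qform(p)=0$, i.e.\ $p\in\partial\DD$. What your approach buys is the removal of an external black box; what the citation buys is generality (it covers all hyperbolic root systems at once, not just $U_3$).

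The ``if'' direction, however, has a genuine gap, and it is not quite the one you flagged. The claim that the ray from $O$ to $p$ ``crosses an infinite sequence of edges'' is \emph{false} when $p$ is an ideal vertex of $\cT$: the geodesic ray toward such a $p$ eventually remains inside a single triangle of the infinite fan of triangles around $p$ (picture $p=\infty$ in the upper half-plane model, where the incident triangles are vertical strips and the ray is a vertical line), so it crosses only finitely many walls. Moreover, the justification you give --- that only finitely many edges of $\Gamma$ lie within bounded tree-distance of $v_\id$ --- does not prove the claim for \emph{any} $p$: it bounds how many edges exist near $v_\id$, not how many the ray must meet. The repair is straightforward: ideal vertices of $\cT$ are already accumulation points by \cref{lem:limitpointsrank2}, so assume $p$ is not one; then if the ray eventually stayed inside a single closed ideal triangle, the fact that the closure of an ideal triangle meets $\partial\DD$ only in its three ideal vertices would force $p$ to be such a vertex, a contradiction, so the ray crosses infinitely many distinct walls $\wh H_{\beta_k}$. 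With this in hand, the technical point you flagged (that the crossing points tend to $p$ rather than to an interior point) also follows: if they accumulated at a point of $\DD^{\circ}$, that point would lie in the closures of infinitely many distinct triangles, whereas a point of $\DD^{\circ}$ lies in at most two closed triangles of $\cT$. The rest of your argument is sound: the chords shrink because $\qform(\wh{\beta}_k)\to 0$, and their endpoints are indeed ideal points of $\cT$ because each wall $\wh H_{\beta}\cap\DD$ is a side of the two ideal triangles adjacent to it (this assertion deserves a sentence in the writeup). One small correction: your parenthetical that the intersection points of rank~$2$ subsystem hyperplanes with $\partial\DD$ ``are exactly the ideal points of $\cT$'' holds only for \emph{parabolic} (equivalently, by \cref{prop:rank2_subsystems}, affine/tangent) subsystems; non-parabolic linear subsystems give chords whose endpoints need not be vertices of $\cT$ --- but you only use the correct inclusion, so this is harmless.
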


\begin{proof}
  This follows from \cite[Theorem 4.4]{DyerHohlwegRipoll} since $\Phi$ is a hyperbolic root system (in the sense given in that article).
\end{proof}

\section{Snakes and Parabolic Biclosed Sets}\label{sec:parabolic}

Throughout this section, we fix a parabolic biclosed set $R\subseteq\Phi^+$, and we let $B=\Phi^+\setminus R$. Let $\wh R$ and $\wh B$ be the corresponding sets of rescaled roots. Let $E_R=\{e_\beta\mid \beta\in R\}$ and $E_B=\{e_\beta\mid\beta\in B\}$; these sets form a bipartition of the edge set of $\Gamma$. We color the edges in $E_R$ red, and we color the edges in $E_B$ blue. We will
construct two curves called \emph{snakes} on the hyperbolic plane.
The pair of snakes will separate $E_R$ from
$E_B$.  Each snake will begin at the vertex $v_\id$ and then slither along through $\DD$ according to specific local
rules.
When $R$ (respectively, $B$) is finite, the heads of the two snakes meet up with each other so that the snakes enclose the
finitely many edges in $E_R$ (respectively, $E_B$).

It is already known that finite and cofinite biclosed sets are weakly
separable.  See \cref{finite-corollary} for a quick proof in these cases using our upcoming results about snakes.   
When $R$ and $B$ are both infinite, we will show that the two heads of the snakes
converge to limit points on the boundary of $\DD$. 
We consider the hyperplane $H_\snake$ in $V$ defined so that the line $\wh H_\snake=H_\snake\cap\AA$ passes through these two limit points. When $R$ and $B$ are biclosed, we will prove
later that $H_\snake$ is a weakly separating hyperplane for~$R$.

Let $f$ be a face of $\Gamma$. Two edges on the boundary of $f$ form a \dfn{color change} if they share a common endpoint $v$ and they have different colors; in this case, we say the color change \dfn{occurs at} the vertex $v$. 

\begin{lemma}\label{change-at-source}
  Let $f$ be a face of $\Gamma$. There are at most two color changes on the boundary of $f$. Moreover, if there is a color change on the boundary of $f$, then a color change occurs at the source vertex $v_{\yy(f)}$. 
\end{lemma}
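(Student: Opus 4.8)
Recall the structure of a face $f$: by \cref{parabolic-geometry}, the boundary $\partial f$ is the bi-infinite oriented path
\[ \cdots u_{-2} \leftarrow u_{-1} \leftarrow u_0 \rightarrow u_1 \rightarrow u_2 \rightarrow \cdots, \]
where $u_0 = v_{\yy(f)}$ is the source vertex, the edges are labeled $e_{\beta_j}$ by positive roots $\beta_j \in \yy(f)\Phi_{ij}$, and the rescaled roots $\wh\beta_j$ are \emph{collinear} in $\AA$, appearing in the order
\[ \wh\beta_{-1}, \wh\beta_{-2}, \wh\beta_{-3}, \dots, \wh\beta_3, \wh\beta_2, \wh\beta_1 \]
along a line segment, with $\beta_{\pm 1}$ the two relatively simple roots. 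The colors on $\partial f$ are exactly the colors of the roots $\beta_j$ under the bipartition $R \sqcup B$.

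**Plan.** The key observation is that the sequence of colors along $\partial f$, read in the \emph{geometric} order of the rescaled roots (i.e.\ from $\wh\beta_{-1}$ outward to the accumulation point, then back in from the other accumulation point to $\wh\beta_1$), has at most one color change, because $R \cap \Phi'$ is biclosed in $\Phi'^+$ where $\Phi' = \yy(f)\cdot\Phi_{ij}$. Here is the reasoning I would carry out. Since $R$ is parabolic biclosed, $R \cap \Phi'$ and $B \cap \Phi' = \Phi'^+ \setminus (R\cap\Phi')$ are both closed in $\Phi'^+$. For any three roots $\gamma_1,\gamma_2,\gamma_3 \in \Phi'^+$ with $\wh\gamma_2$ lying between $\wh\gamma_1$ and $\wh\gamma_3$ on the line segment, the closure condition (reinterpreted via convex hulls, as in the remark following \cref{def:parabolic_biclosed}) forces: if $\gamma_1,\gamma_3$ are the same color, then $\gamma_2$ shares that color. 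Thus along each of the two ``rays'' of rescaled roots emanating toward the two ideal endpoints of $\wh H$, the color can change at most once.

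**Locating the color changes.** The two rays correspond to the two halves $\{\beta_1,\beta_2,\dots\}$ and $\{\beta_{-1},\beta_{-2},\dots\}$ of the boundary, separated geometrically by the accumulation point of $\wh{\Phi'}^+$ (\cref{lem:limitpointsrank2}). By the convexity argument, along the positive half there is at most one color change, and it occurs between some $\beta_j, \beta_{j+1}$ with $j \geq 1$; along the negative half there is at most one, between $\beta_{-(k+1)}, \beta_{-k}$ with $k\geq 1$. This gives at most two color changes total on $\partial f$, proving the first assertion. For the ``moreover'' statement, I would argue that whenever a color change occurs at all, one must occur at $u_0 = v_{\yy(f)}$. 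The two edges meeting at $u_0$ are $e_{\beta_{-1}}$ and $e_{\beta_1}$, the relatively simple roots. I would show that if $\beta_1$ and $\beta_{-1}$ were the \emph{same} color, then by the biclosedness convexity on $\Phi'^+$, \emph{every} root in $\Phi'^+$ would be that same color, making $\partial f$ monochromatic with \emph{no} color change at all. The crucial point is that $\wh\beta_1$ and $\wh\beta_{-1}$ are the two extreme rescaled roots: every other $\wh\beta_j$ lies in the convex hull of $\{\wh\beta_1, \wh\beta_{-1}\}$ along the line (since they are the endpoints of the ordered collinear sequence), so if both extremes lie in $R$ then closedness of $R\cap\Phi'$ forces all of $\Phi'^+ \subseteq R$, and symmetrically for $B$.

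**Main obstacle.** The delicate step is the geometric claim that $\wh\beta_{-1}$ and $\wh\beta_1$ are the convex-hull-extreme rescaled roots of $\Phi'^+$, so that monochromaticity of the two relatively simple roots propagates to all of $\Phi'^+$. This requires care because the collinear ordering in \cref{parabolic-geometry} places $\wh\beta_{-1},\wh\beta_1$ at the two \emph{ends} of the listed sequence, but the sequence approaches accumulation points at $\wh H \cap \partial\DD$ (\cref{lem:limitpointsrank2}) lying \emph{between} $\wh\beta_{-1}$ and $\wh\beta_1$ on the line; I must confirm the relatively simple roots really are the outermost points of $\wh{\Phi'}^+$ and that the convex-hull notion of ``closed'' (not the raw linear order toward the accumulation points) correctly links the two extremes. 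Using \cref{lem:rank2geom}(3), the relatively simple roots have the smallest height, hence by \cref{lem:heightnorm} their rescaled versions are farthest from $O$, confirming they are the extreme points; this is the linchpin I would verify carefully to complete the argument.
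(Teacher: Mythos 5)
Your overall strategy is the same as the paper's: you use the collinear geometric order of the rescaled roots from \cref{parabolic-geometry} together with rank-$2$ biclosedness of $R\cap\Phi'$, and you use extremality of the two relatively simple roots $\beta_{\pm 1}$ for the ``moreover'' clause. Your treatment of the ``moreover'' part is correct and clean: since every $\wh\beta_j$ lies in $\conv\{\wh\beta_{-1},\wh\beta_1\}$, if $\beta_{-1}$ and $\beta_1$ had the same color then closedness of that color class would make $\partial f$ monochromatic; contrapositively, any color change forces one at $v_{\yy(f)}$. (The paper obtains the same conclusions by observing that $B\cap\Phi'$ must be an initial or final segment of the geometric order and running a short case analysis.)

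However, your proof of the first assertion (``at most two color changes'') has a genuine gap. After stating the key observation that the colors admit at most one change in the \emph{full} geometric order, you only deploy the weaker statement ``at most one change within each of the two halves,'' and then count $1+1=2$. That count omits the vertex $u_0=v_{\yy(f)}$, where the edges $e_{\beta_{-1}}$ and $e_{\beta_1}$ meet: these two roots sit at \emph{opposite extremes} of the geometric order, so a color change at $u_0$ is not a change between geometrically consecutive roots and is invisible to either half. Per-half convexity plus your ``moreover'' argument therefore only yield a bound of three; indeed they do not exclude the configuration in which $\beta_{-1}$ is red, $\beta_1$ is blue, all other negative-index roots are blue, and all other positive-index roots are red --- three color changes, at $u_{-1}$, $u_0$, and $u_1$, with each half internally convex. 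What rules this configuration out is convexity \emph{across} the halves: there $R\cap\Phi'$ contains $\beta_{-1}$ and $\beta_2$, yet $\wh\beta_{-2}\in\conv\{\wh\beta_{-1},\wh\beta_2\}$ is blue, contradicting closedness of $R\cap\Phi'$. So you must actually invoke your full-order ``key observation'' (equivalently, the initial/final-segment structure the paper uses): at most one change can occur among geometrically consecutive pairs, i.e., at the vertices $u_j$ with $j\neq 0$, plus at most one more at $u_0$, giving the bound of two. The fix is short, but as written your argument does not prove the stated bound. A minor additional point: a rank-$2$ parabolic subsystem is affine by \cref{prop:rank2_subsystems}, so the relevant line is tangent to $\partial\DD$ and there is a single accumulation point rather than ``two ideal endpoints''; this does not affect the combinatorics.
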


\begin{proof}
Let $P$ be the rank 2 parabolic subsystem such that the set of edges on $\partial f$ is $\{ e_{\beta} : \beta \in P \}$. Let $u_0$ be the source vertex of $f$, and number the other vertices of $\partial f$ as in \cref{parabolic-geometry}; number the elements of $P$ as $\{ \beta_j : j \neq 0 \}$ as in \cref{parabolic-geometry}. 

As described in \cref{parabolic-geometry}, the rescaled roots $\wh\beta$ for $\beta \in P \}$ occur on a line segment in the order $\wh\beta_{-1}$, $\wh\beta_{-2}$, $\wh\beta_{-3}$, \dots, $\wh\beta_3$, $\wh\beta_2$, $\wh\beta_1$. Since $B$ is biclosed, $B \cap P$ is either an initial or a final segment of this order; without loss of generality, assume $B \cap P$ is a final segment. (If $B \cap P$ is an initial segment, the roles of red and blue are reversed for the rest of the proof.) We now work through the various possibilities for this final segment. 

If $B \cap P$ is the finite final segment $\wh\beta_j$, \dots, $\wh\beta_2$, $\wh\beta_1$, then the path $u_0 \to u_1 \to \cdots \to u_j$ is blue and the rest of $\partial f$ is red, so there are color changes at $u_0$ and at $u_j$.  

If $B \cap P$ is the cofinite final segment $\wh\beta_{-j-1}$, $\wh\beta_{-j-2}$, \dots, $\wh\beta_3$, $\wh\beta_2$, $\wh\beta_1$, then the path $u_0 \to u_{-1} \to \cdots \to u_{-j}$ is red and the rest of $\partial f$ is blue,  so there are color changes at $u_0$ and at $u_{-j}$. 

If $B \cap P$ is the final segment \dots, $\wh\beta_2$, $\wh\beta_1$, then $u_0 \to u_1 \to u_2 \to \cdots$ is blue and $u_0 \to u_{-1} \to u_{-2} \to \cdots$ is red, so there is one color change at $u_0$. 

Finally, if $B \cap P = P$, then all of $\partial f$ is blue and, if $B \cap P = \varnothing$, then all of $\partial f$ is red. In these cases, there is no color change.
\end{proof}

Suppose $f$ is a face of $\Gamma$ whose boundary has at least one color change. By \cref{change-at-source}, the boundary of $f$ has a color change that occurs at $v_{\yy(f)}$. We will draw a curve $\mathcal S_f$ with no self-intersections that starts at $v_{\yy(f)}$ and is oriented away from $v_{\yy(f)}$. If the boundary of $f$ has exactly one color change (which occurs at $v_{\yy(f)}$), then $\mathcal S_f$ travels through $f$ and approaches the boundary of $\DD$. If the boundary of $f$ has a second color change at some vertex $v_{\zz(f)}$, then $\mathcal S_f$ travels from $v_{\yy(f)}$ to $v_{\zz(f)}$. We call $\mathcal S_f$ a \dfn{snake segment}. (See \cref{fig:snake_segments}.)  

\begin{figure}
    \centering
    \includegraphics[width=0.8\linewidth]{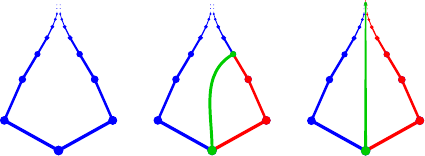}
    \caption{On the left is a face whose boundary has no color change. In the middle is a face whose boundary has two color changes. On the right is a face whose boundary has one color change. Each of the faces in the middle and on the right has a snake segment.} 
    \label{fig:snake_segments} 
\end{figure}

Now suppose $\yy(f)\neq\id$. Then there is another face $f^{(1)}$ that has a color change on its boundary occurring at $v_{\yy(f)}$. Since $\yy(f)\neq\id$, we know that $\yy(f)\neq\yy(f^{(1)})$, so we must have $\yy(f)=\zz(f^{(1)})$. Thus, the curve $\mathcal S_{f^{(1)}}$ travels from $v_{\yy(f^{(1)})}$ to the vertex $v_{\zz(f^{(1)})}=v_{\yy(f)}$, where it touches $\mathcal S_f$. Note that the Coxeter length of $\yy(f^{(1)})$ is strictly less than that of $\yy(f)$. Thus, by repeating this process, we find a sequence of faces $f,f^{(1)},f^{(2)},\ldots,f^{(k)}$ such that $\yy(f^{(k)})=\id$ and such that the union of the snake segments $\cS_{f}\cup\cS_{f^{(1)}}\cup\cdots\cup\cS_{f^{(k)}}$ is a single oriented curve starting at $v_\id$ and ending at $v_{\zz(f)}$. Let $\omega(f)$ be the face $f^{(k)}$ constructed in this procedure. 

\begin{proposition}\label{prop:v1colors}
    If $R$ and $B$ are both nonempty, then the edges incident to $v_\id$ do not all have the same color. 
\end{proposition}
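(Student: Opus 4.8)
The plan is to prove the contrapositive in a strengthened form: if all three edges incident to $v_\id$ share a color, then one of $R,B$ is empty. The edges at $v_\id$ are $e_{\alpha_1},e_{\alpha_2},e_{\alpha_3}$, so this reduces to showing that if $\{\alpha_1,\alpha_2,\alpha_3\}\subseteq R$ then $R=\Phi^+$, and symmetrically for $B$. Since the simple roots are exactly the positive roots of depth $1$, it suffices to prove by induction on depth that every positive root lies in $R$. The base case is the hypothesis $\{\alpha_1,\alpha_2,\alpha_3\}\subseteq R$, and the case $\{\alpha_1,\alpha_2,\alpha_3\}\subseteq B$ follows by exchanging the roles of the two colors.

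The engine of the induction is a rank~$2$ observation. Fix a rank~$2$ parabolic subsystem $\Phi'$, and recall from \cref{parabolic-geometry} that the rescaled roots of $\Phi'^+$ lie in a linear order along a segment whose two extreme elements are precisely the relatively simple roots of $\Phi'$. A subset of $\Phi'^+$ is closed exactly when it is order-convex for this order, so a biclosed subset (closed with closed complement) must be an initial or a final segment. Consequently, a biclosed subset of $\Phi'^+$ containing both relatively simple roots must be all of $\Phi'^+$, since an initial or final segment containing both extremes of the order is the whole order. Because $R$ is parabolic biclosed (\cref{def:parabolic_biclosed}), the set $R\cap\Phi'$ is biclosed in $\Phi'^+$; hence whenever both relatively simple roots of $\Phi'$ belong to $R$, the entire subsystem $\Phi'^+$ is contained in $R$.

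Now let $\beta$ be a positive root of depth $d\geq 2$, and assume every positive root of depth $<d$ lies in $R$. As $\beta$ is not simple, \cref{prop:rank2_parabolics} provides a rank~$2$ parabolic subsystem $\Phi'$ containing $\beta$ in which $\beta$ is not relatively simple. Let $f$ be the face of $\Gamma$ corresponding to $\Phi'$, and index its boundary roots $\beta_j$ as in \cref{parabolic-geometry}. If the source vertex of $f$ has depth $d_0$, then the vertex $u_j$ has depth $d_0+|j|$, so the root $\beta_j$ (whose farther endpoint is $u_j$) has depth $d_0+|j|$. In particular, the two relatively simple roots $\beta_{\pm 1}$ have depth $d_0+1$, whereas $\beta=\beta_m$ for some $|m|\geq 2$ has depth $d_0+|m|\geq d_0+2$. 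Therefore both relatively simple roots of $\Phi'$ have depth strictly less than $d$, so by the inductive hypothesis they lie in $R$. The rank~$2$ observation then forces $\beta\in R$, completing the induction and proving $R=\Phi^+$. This contradicts $B\neq\varnothing$, and the symmetric argument with colors reversed contradicts $R\neq\varnothing$ in the blue case.

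The main point to nail down carefully is the rank~$2$ classification underlying the key observation, namely that the biclosed subsets of an affine rank~$2$ parabolic subsystem are exactly the initial and final segments of the order of \cref{parabolic-geometry}, together with the fact that this order has the relatively simple roots at its two extremes (where \cref{lem:rank2geom} controls the heights). The only other ingredient is the monotonicity of depth along a face boundary, which is immediate from orienting $\Gamma$ away from $v_\id$. Once these are in hand, the induction is routine bookkeeping, and I expect the rank~$2$ classification to be the sole substantive step.
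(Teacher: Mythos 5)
Your proof is correct, but it takes a genuinely different route from the paper's. The paper argues via the snake machinery: since $R$ and $B$ are both nonempty, connectivity of the $1$-skeleton of the Farey tessellation yields a face whose boundary carries both colors, and the snake-segment construction (resting on \cref{change-at-source}) is then traced backwards from that face to a face whose source vertex is $v_\id$, forcing a color change at $v_\id$. You instead prove the strengthened contrapositive---a parabolic biclosed set containing all of $\Delta$ equals $\Phi^+$---by induction on depth, using the order-convexity characterization of closed subsets of a rank~$2$ parabolic subsystem (the relatively simple roots are the two extremes of the linear order of \cref{parabolic-geometry}, so a closed subset containing both extremes is everything) together with \cref{prop:rank2_parabolics} and the monotonicity of depth along oriented face boundaries. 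Your rank-$2$ classification is sound; the paper itself asserts exactly this initial/final-segment dichotomy inside the proof of \cref{change-at-source}, and in fact for your purposes you need only closedness of $R\cap\Phi'$, not biclosedness. As for what each approach buys: the paper's argument integrates with the snake formalism it is in the middle of developing and localizes a color change at $v_\id$ directly, whereas yours is more elementary (no snakes, no connectivity of the tessellation) and actually establishes the stronger statement that is essentially the paper's later \cref{lem:closeDelta} (the parabolic $2$-closure of $\Delta$ is $\Phi^+$), from which \cref{cor:bicsimple} also follows immediately.
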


\begin{proof}
    Let $f,f'$ be faces of $\Gamma$ such that $f$ contains an edge in $R$ and $f'$ contains an edge in $B$. 
    By the connectivity of the 1-skeleton (vertices and edges) of the Farey tessellation, there is a sequence of faces and edges $f=f^{(0)},e^{(1)},\ldots,e^{(k)},f^{(k)}=f'$ such that $e^{(i)}$ is incident to $f^{(i-1)}$ and $f^{(i)}$ for all $i$.
    Hence, there is a face $f^{(j)}$ that contains edges of both colors.
    By the construction above, there is a sequence of snake segments going backwards from $f^{(j)}$ to a face $f''$ with source $v_\id$. 
    By definition, the two edges in $f''$ incident to $v_\id$ have different colors.
\end{proof}

\begin{corollary}\label{cor:bicsimple}
    Every nonempty parabolic biclosed subset of $\Phi^+$ contains a simple root.
\end{corollary}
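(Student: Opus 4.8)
The plan is to deduce \cref{cor:bicsimple} directly from \cref{prop:v1colors}. Let $R$ be a nonempty parabolic biclosed set, and set $B=\Phi^+\setminus R$. First I would dispose of the trivial observation that the three edges incident to the identity vertex $v_\id$ are precisely the edges $e_{\alpha_1}, e_{\alpha_2}, e_{\alpha_3}$ corresponding to the simple roots, since $v_\id$ has outdegree $3$ and these edges are dual to the hyperplanes $\wh H_{\alpha_i}$; equivalently, the reflections $r_{\alpha_i}=s_i$ are exactly the generators fixing a hyperplane through $T_\id$.

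The main step is a case split on whether $B$ is empty. If $B=\Phi^+\setminus R$ is empty, then $R=\Phi^+$, which certainly contains every simple root, so we are done immediately. If $B$ is nonempty, then both $R$ and $B$ are nonempty, and \cref{prop:v1colors} applies: the edges incident to $v_\id$ do not all have the same color. Since these edges are exactly $e_{\alpha_1},e_{\alpha_2},e_{\alpha_3}$, at least one of them is red, meaning at least one simple root $\alpha_i$ lies in $R$. This gives the simple root in $R$ that we seek.

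The one point requiring care is the identification of the edges at $v_\id$ with the simple roots, so I would make that explicit: by the construction of the embedding of $\Gamma$, the edge $e_\beta$ joining $v_w$ and $v_{ws_i}$ satisfies $r_\beta = w s_i w^{-1}$, so the three edges at $v_\id$ (with $w=\id$) correspond to $\beta$ with $r_\beta = s_i$, i.e. $\beta=\alpha_i$. With this in hand, an edge incident to $v_\id$ is red exactly when the corresponding simple root belongs to $R$, and \cref{prop:v1colors} guarantees not all three are blue.

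I do not anticipate a serious obstacle here, as essentially all the work has been done in \cref{prop:v1colors}; the only subtlety is that \cref{prop:v1colors} assumes both $R$ and $B$ are nonempty, so the degenerate case $R=\Phi^+$ must be handled separately as above.
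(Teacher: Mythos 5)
Your proof is correct and is essentially the paper's own argument: the corollary is stated immediately after \cref{prop:v1colors} precisely because it follows by identifying the three edges at $v_\id$ with the simple roots and noting that not all of them can be blue. Your explicit handling of the degenerate case $B=\varnothing$ (where $R=\Phi^+$ trivially contains the simple roots) is a sensible bit of care that the paper leaves implicit.
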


\begin{remark}
    A statement about biclosed subsets of any root system analogous to \cref{cor:bicsimple} appears as \cite[Lemma~1.7(ii)]{dyer:2019weak}.
\end{remark}

For the remainder of this section, we assume that $R$ and $B$ are both nonempty, so that the edges incident to $v_\id$ do not all have the same color by \cref{prop:v1colors}.
Then there are two faces $f_1$ and $f_2$ that have color changes on their boundaries occurring at $v_\id$. 
For each $i\in\{1,2\}$, by construction, the union of all snake segments $\cS_f$ such that $\omega(f)=f_i$ is a single oriented curve $\mathbf{S}_i$ that starts at $v_\id$. 
We call this curve $\mathbf{S}_i$ a \dfn{snake}. 
The union of $\mathbf{S}_1$ and $\mathbf{S}_2$ is a topological $1$-manifold that we call a \dfn{pair of snakes}.

\begin{proposition}\label{prop:manifold}
    A pair of snakes is a connected 1-manifold that separates the disk into two regions.
    Any two edges in the same region have the same color.
\end{proposition}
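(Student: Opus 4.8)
The plan is to establish both the manifold claim and the separation claim by a local analysis at the vertices and faces of $\Gamma$, and then to read off the coloring statement from the within-face picture already extracted in the proof of \cref{change-at-source}. Write $\mathbf{S}=\mathbf{S}_1\cup\mathbf{S}_2$ for the pair of snakes.

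First I would check that $\mathbf{S}$ is a boundaryless $1$-manifold by showing that every point of $\mathbf{S}$ has a neighborhood in $\mathbf{S}$ homeomorphic to an open interval. Interior points of a snake segment $\cS_f$ cause no trouble, since each $\cS_f$ is an embedded arc whose interior lies in the open face $f$, and distinct faces have disjoint interiors; so the content is at the vertices. At a vertex $v$ incident to three edges of $\Gamma$, the cyclic sequence of the three edge-colors has either zero or two color changes. If it has none, then no color change occurs at $v$, so by \cref{change-at-source} the vertex $v$ is an endpoint of no snake segment and $v\notin\mathbf{S}$. If it has two, then exactly two of the three faces at $v$ exhibit a color change at $v$; each such face contributes exactly one snake segment with an endpoint at $v$, and these two segments approach $v$ from distinct faces, so together they form an embedded arc through $v$. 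The same count holds at $v_\id$, where the two faces $f_1,f_2$ each contribute the initial segment of a snake. Hence $\mathbf{S}$ is locally an open interval at each of its points, so it is a $1$-manifold; it is connected because $\mathbf{S}_1$ and $\mathbf{S}_2$ share $v_\id$. Since the depth of the vertices traversed along a snake strictly increases, only finitely many segments meet any compact subset of $\DD^\circ$, so $\mathbf{S}$ is closed in $\DD^\circ$ and properly embedded. A connected, boundaryless, properly embedded $1$-manifold in $\DD^\circ\cong\RR^2$ is either a circle or a properly embedded line, and in either case the Jordan curve theorem (or, in the non-compact case, the corresponding separation theorem for a properly embedded line) shows that $\DD^\circ\setminus\mathbf{S}$ has exactly two components.

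Next I would prove the coloring statement. The key local input, recorded in the proof of \cref{change-at-source}, is that within each face $f$ the segment $\cS_f$ separates the red boundary edges of $f$ from the blue ones: a face with no color change has all boundary edges of a single color and no segment, while a face with one or two color changes is cut by $\cS_f$ into two pieces, one bounded by a monochromatic red arc of edges and one bounded by a monochromatic blue arc. Thus each component (a \emph{face-part}) of $\overline{f}\setminus\mathbf{S}$ has a well-defined color, namely the common color of the boundary edges of $f$ lying on it. Because $\mathbf{S}$ never crosses the interior of an edge, two face-parts sharing an edge-interior lie in the same component of $\DD^\circ\setminus\mathbf{S}$ and, sharing that edge, have equal color. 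Finally, any path in $\DD^\circ\setminus\mathbf{S}$ may be perturbed within $\DD^\circ\setminus\mathbf{S}$ to avoid the vertices of $\Gamma$ and to meet $\Gamma$ transversally in finitely many edge-interiors; between crossings it stays inside a single face-part, and each crossing joins two face-parts of equal color, so the color is constant along the path. Therefore each of the two regions is monochromatic, which is exactly the desired conclusion; since edges of both colors occur by \cref{prop:v1colors}, the two regions are one red and one blue.

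The main obstacle I anticipate is the bookkeeping at vertices---proving that each vertex met by $\mathbf{S}$ carries precisely two segment-ends, so that no branching occurs and $\mathbf{S}$ is genuinely a $1$-manifold---together with the care needed to justify properness and to invoke the separation theorem in the non-compact (line) case rather than only for a circle. Once the within-face separation of colors from \cref{change-at-source} is in hand, the monochromaticity argument is routine.
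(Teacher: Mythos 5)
Your proof is correct and follows essentially the same approach as the paper's: a local count showing each vertex of $\Gamma$ carries exactly zero or two snake-segment ends (so the union is a connected $1$-manifold), followed by the within-face separation of red from blue edges by $\cS_f$ (established in the proof of \cref{change-at-source}) to show each complementary region is monochromatic. The only difference is that you spell out properness of the embedding and the separation theorem for the non-compact case, points the paper's proof leaves implicit when it asserts that a connected embedded $1$-manifold cuts the disk into at most two regions.
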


\begin{proof}
    At each endpoint $v$ of a snake edge, one edge of $\Gamma$ incident to $v$ has the opposite color as the other two edges. 
    Hence, there is exactly one other snake edge that has $v$ as an endpoint.
    Consequently, the union of all snake edges is a 1-manifold.
    By construction, each snake $\mathbf{S}_i$ is path connected and contains a common point $v_\id$, so the pair of snakes is a connected 1-manifold.

    As a connected 1-manifold embedded in the disk, the pair of snakes separates $\DD^{\circ}$ into at most two regions. 
    We claim that any red edge $e_{\alpha}\in E_R$ and blue edge $e_{\beta}\in E_B$ must be in distinct components.
    If not, then there must exist a path $\phi:[0,1]\rightarrow \DD$ from $e_{\alpha}$ to $e_{\beta}$ that avoids the pair of snakes.
    We may require that $\phi$ avoids all vertices of $\Gamma$.
    This path crosses a sequence of edges of $\Gamma$.
    Two consecutive edges $e$, $e'$ along the path must have different colors.
    Thus, $e$ and $e'$ are incident to a common face $f$ with distinct colors, so they are separated within $f$ by a snake edge $\cS_f$.
    Therefore, $\phi$ crosses $\cS_f$, a contradiction.
\end{proof}

\begin{proposition}\label{monotonic-prop}
  Each snake may be chosen within its isotopy class so that for each $\beta\in\Phi^+$, it only passes through $\wh H_{\beta
  }$ 
  at most once. 
\end{proposition}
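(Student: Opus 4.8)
The plan is to understand precisely how a snake interacts with the hyperplanes $\wh H_\beta$ as it slithers, and then to fix up any repeated crossings by a local isotopy. First I would recall that each snake $\mathbf S_i$ is a union of snake segments $\cS_f$, each of which lives inside a single face $f$ and runs between color-change vertices (or out to $\partial\DD$). Within a face $f$ corresponding to a rank~$2$ parabolic subsystem $P$, the rescaled roots $\wh\beta$ for $\beta\in P$ are collinear (by \cref{parabolic-geometry}), and by \cref{lem:collinear_order} and \cref{rem:geodesics} the lines $\wh H_\beta$ for $\beta\in P$ are exactly the lines dual to the edges $e_\beta$ of $\partial f$; each such line separates the two endpoints of $e_\beta$. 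So as the snake segment $\cS_f$ crosses from one side of $f$ to another it meets precisely the lines $\wh H_\beta$ for those $\beta\in P$ whose edges it passes between, and these are the color-change edges. The key geometric input is thus that inside a single face the relevant hyperplanes are in convex position and a segment can be drawn meeting each at most once.

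Next I would argue that the only way the same hyperplane $\wh H_\beta$ could be crossed twice by a snake is if the snake passes through two distinct edges dual to $\wh H_\beta$; but in $U_3$ each hyperplane crosses a \emph{unique} edge of the Cayley graph (as emphasized in the remark after \cref{prop:rank2_subsystems} and in \cref{tbl:correspondences}), so distinct edges correspond to distinct hyperplanes. Therefore a snake segment within one face meets each $\wh H_\beta$ at most once automatically. The remaining danger is a \emph{global} one: a single hyperplane $\wh H_\beta$ might be approached tangentially or crossed spuriously when the snake passes near a vertex, or the snake might weave back and forth near the boundary $\partial\DD$. To handle this I would invoke \cref{lem:right_angle} and the collinear ordering of \cref{lem:collinear_order} to choose, within each face, the snake segment to be a geodesic-like arc that crosses the dual lines monotonically in the order they are listed in \cref{parabolic-geometry}, i.e. respecting the linear order $\wh\beta_{-1},\wh\beta_{-2},\dots,\wh\beta_2,\wh\beta_1$ of rescaled roots.

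The heart of the argument, then, is a monotonicity statement: along the whole snake $\mathbf S_i$, the sequence of hyperplanes crossed is strictly increasing in depth. I would prove this using the orientation of $\Gamma$ away from $v_\id$: each snake segment $\cS_f$ runs from $v_{\yy(f)}$ to $v_{\zz(f)}$ with $\zz(f)$ deeper than $\yy(f)$ (the Coxeter length strictly increases, as observed in the construction of $\omega(f)$ just before \cref{prop:v1colors}), and the hyperplane $\wh H_\beta$ separating $v_{\yy(f)}$ from $v_{\zz(f)}$ has depth strictly greater than all hyperplanes crossed earlier. Formally, if $\wh H_\beta$ were crossed twice, the subarc between the two crossings would be a closed loop meeting $\wh H_\beta$ transversally an even number of times while starting and ending on the same side, which combined with the depth-monotonicity of the snake's progress away from $v_\id$ is impossible. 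I would make this rigorous by noting the snake's underlying path in the tree $\Gamma$ is the geodesic ray (or segment) from $v_\id$, and \cref{rem:geodesics} tells us a geodesic crosses each hyperplane at most once.

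The main obstacle I anticipate is the passage to the boundary $\partial\DD$ in the case where a snake segment $\cS_f$ does not terminate at a second color-change vertex but instead runs out to an ideal point. There the snake is no longer pinned between two tree vertices, so I must verify that the chosen arc can be isotoped to approach $\partial\DD$ without doubling back across any $\wh H_\beta$; this requires controlling the accumulation of rescaled roots, for which I would lean on \cref{lem:limitpointsrank2} and \cref{lem:limitpoints} to ensure that near $\partial\DD$ the hyperplanes $\wh H_\beta$ accumulate only on $\partial\DD$ itself and so only finitely many can obstruct a given terminal arc. Once monotone crossing is established segment-by-segment and the finitely many crossings are put in order, a final isotopy within each face — replacing $\cS_f$ by the unique geodesic arc realizing the prescribed crossing pattern — produces a representative of the snake meeting each $\wh H_\beta$ at most once, as claimed.
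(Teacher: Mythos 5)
Your proposal is correct and its load-bearing step is exactly the paper's proof: the vertices visited by a snake form a chain in weak order (equivalently, they lie along a geodesic of the tree $\Gamma$), so by \cref{rem:geodesics} and the nesting of inversion sets each hyperplane $\wh H_\beta$ separates at most one consecutive pair of visited vertices, and an isotopy within each face then realizes this minimal crossing pattern (your appeal to limit roots for the terminal segment is unnecessary, since all hyperplanes meeting a face form a pencil through its ideal point). One caution: your second paragraph's claim---that a hyperplane could only be crossed twice if the snake passed through two distinct edges dual to it---is not a valid inference, because the snake meets $\Gamma$ only at vertices and $\wh H_\beta$ runs through both faces adjacent to $e_\beta$, which the snake can indeed both visit; this step is redundant, however, since your weak-order monotonicity argument already rules out such double crossings.
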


\begin{proof}
  This follows from the fact 
  that the sequence of vertices of $\Gamma$ through which the snake passes is monotonically increasing in Coxeter length and, more specifically, is a chain in weak order.  So the result follows from the definition of weak order as an order by containment of inversion sets.   
\end{proof}

\begin{lemma}\label{finite-corollary}
Suppose the heads of the snakes meet at a vertex $v^\star$ of $\Gamma$. The edges of $\Gamma$ enclosed by the pair of snakes are exactly the edges on the unique path from $v_\id$ to $v^\star$. 
\end{lemma}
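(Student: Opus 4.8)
The plan is to treat the pair of snakes as a Jordan curve and exploit that it never crosses an edge of $\Gamma$. Under the hypothesis that the two heads meet at $v^\star$, \cref{prop:manifold} says the pair of snakes is a connected $1$-manifold; since it is compact with its two branches joined at the two points $v_\id$ and $v^\star$, it is a topological circle, and it is contained in the open disk $\DD^{\circ}$. Thus it bounds a bounded region $D_{\mathrm{in}}$ and an unbounded region $D_{\mathrm{out}}$, with $D_{\mathrm{out}}$ containing a collar neighborhood of $\partial\DD$. Because each snake segment lies in the interior of a face and touches $\Gamma$ only at the source and target vertices, the loop meets $\Gamma$ only in vertices and crosses no edge; hence every edge of $\Gamma$ lies entirely in $D_{\mathrm{in}}$ or entirely in $D_{\mathrm{out}}$, and by \cref{prop:manifold} each region is monochromatic. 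Finally, by \cref{monotonic-prop} the vertices on each snake form a chain in weak order terminating at $v^\star$; by \cref{rem:geodesics} a weak-order chain corresponds to a nested family of inversion sets, i.e.\ to vertices lying along the unique geodesic $\gamma$ from $v_\id$ to $v^\star$ in the tree. Consequently the loop meets $\Gamma$ only at vertices of $\gamma$.

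First I would prove the inclusion that the enclosed edges are contained in the edges of $\gamma$. Let $e$ be an edge not on $\gamma$. In the tree, the reduced path from $e$ to $\gamma$ enters $\gamma$ at a single vertex $u$, and the maximal subtree containing $e$ that meets $\gamma$ only at $u$ is an infinite trivalent subtree. This subtree meets the loop at most at the point $u$ (the loop touches $\Gamma$ only in vertices of $\gamma$, and the subtree meets $\gamma$ only at $u$), so the subtree with $u$ removed is connected and disjoint from the loop. Since the subtree contains vertices $v_w$ of arbitrarily large length, which approach $\partial\DD$ (so the subtree accumulates on $\partial\DD$ by \cref{lem:limitpoints}) while the loop stays at positive distance from $\partial\DD$, this connected set must lie in the unbounded region $D_{\mathrm{out}}$. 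Hence $e\in D_{\mathrm{out}}$ is not enclosed, and every enclosed edge lies on $\gamma$.

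For the reverse inclusion I would first observe that the previous paragraph already shows $D_{\mathrm{out}}$ contains all of the infinitely many non-geodesic edges, so $D_{\mathrm{out}}$ carries the infinite color and $D_{\mathrm{in}}$ carries the finite color; thus the enclosed edges are exactly the finite color class, which is contained in the edge set of $\gamma$. It then remains to show that \emph{every} edge of $\gamma$ carries the finite color, and this is the step I expect to be the main obstacle, since it is precisely the statement that the finite color class fills out the whole geodesic rather than a proper subpath. The approach is to trace $\gamma$ from $v_\id$ to $v^\star$. At $v_\id$, \cref{prop:v1colors} guarantees a genuine color change, the two snakes emanate into the two faces exhibiting it, and the wedge between them—the local picture of $D_{\mathrm{in}}$ at the pinch point—contains precisely the lone minority-colored edge, which is finite-colored; the same pinching occurs at $v^\star$. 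At each intermediate vertex $u_i$ of $\gamma$, the outgoing branch is non-geodesic and hence infinite-colored by the second paragraph, so by \cref{change-at-source} the snake passing through $u_i$ must route that branch to the $D_{\mathrm{out}}$ side, keeping both incident geodesic edges on the $D_{\mathrm{in}}$ side and forcing them to share the finite color. Iterating this local analysis along $\gamma$, using the pinching at both ends to anchor the induction, shows that the finite color class is exactly the edge set of $\gamma$, which is the inversion set $\Inv(v^\star)$; this completes the identification of the enclosed edges with the path from $v_\id$ to $v^\star$.
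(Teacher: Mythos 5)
Your first two paragraphs are correct, and they establish the inclusion ``enclosed edges $\subseteq$ edges of the geodesic $\gamma$'' by a route genuinely different from the paper's (the paper never isolates the fact that the loop meets $\Gamma$ only at vertices of $\gamma$, nor uses the branch-accumulation argument). The genuine gap is in your third paragraph, exactly where you predicted it: the reverse inclusion. Two things are missing. First, you invoke ``the snake passing through $u_i$,'' but nothing you have proved guarantees the loop visits an intermediate vertex $u_i$ of $\gamma$ at all; if it does not, a small disk around $u_i$ lies in a single region, and since the branch edge at $u_i$ lies in $D_{\mathrm{out}}$ by your second paragraph, \emph{both} geodesic edges at $u_i$ would lie in $D_{\mathrm{out}}$ as well --- precisely the failure mode (only a proper subpath enclosed) that the lemma must exclude. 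Second, even when the loop does pass through $u_i$, \cref{change-at-source} does not give your routing claim: the two germs of the loop at $u_i$ enter the two faces exhibiting a color change at $u_i$, and these are always the two faces adjacent to the \emph{minority-colored} edge at $u_i$, whichever edge that is. A priori the minority edge could be the \emph{incoming} geodesic edge, in which case the loop isolates it and routes the outgoing geodesic edge to the $D_{\mathrm{out}}$ side together with the branch; nothing you cite forbids this.

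The repair must use the hypothesis that the heads meet at $v^\star$, which your sketch never invokes in the inductive step. One way to close it: suppose, at the first intermediate vertex $u$ along $\gamma$ where the induction fails, the incoming edge $e_j$ is in $D_{\mathrm{in}}$ but the outgoing geodesic edge is in $D_{\mathrm{out}}$ (your anchor at $v_\id$ does work, since the sector containing the two majority edges contains an off-geodesic edge, hence is locally $D_{\mathrm{out}}$, forcing the minority sector to be locally $D_{\mathrm{in}}$). Then the two germs of the loop at $u$ lie in the two faces containing $e_j$. Neither of these faces has $u$ as its source: the face sourced at $u$ is the one between the two outgoing edges, and it has no color change at $u$ because both outgoing edges lie in $D_{\mathrm{out}}$ and so share a color by \cref{prop:manifold}. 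Hence no snake segment \emph{starts} at $u$, so both germs are terminal ends of snake segments, so both snakes terminate at $u$; since a snake's vertices strictly increase in weak order, these are two distinct snakes, and their heads meet at $u\neq v^\star$, a contradiction. With that argument inserted your induction closes, but as written the proposal asserts rather than proves its key step. Note also that the paper's own proof is entirely different and far shorter: if the enclosed edges did not form the path, some vertex $v$ would have all three incident edges enclosed, so the face sourced at $v$ would have no color change and hence no snake segment, forcing that entire face to be enclosed --- impossible, since the face accumulates on $\partial\DD$ while the pair of snakes is a Jordan curve in the open disk.
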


\begin{proof}
Suppose instead that the edges enclosed by the pair of snakes do not form a path. Then there is a vertex $v$ such that all three edges incident to $v$ are enclosed by the pair of snakes. Let $f$ be the face such that $v_{\yy(f)}=v$. Because there is no color change at $v$, neither snake passes through $f$. This implies that $f$ is enclosed by the pair of snakes. This contradicts the assumption that the two heads of the snakes meet at a vertex in $\Gamma$; indeed, the pair of snakes is a Jordan curve, so it does not enclose any points on $\partial\DD$. 
\end{proof}

\begin{proposition}\label{prop:finite_snake}
    Let $R$ be a parabolic biclosed set, and let $B=\Phi^+\setminus R$.
    The following statements are equivalent.
    \begin{enumerate}
        \item Either $R$ or $B$ is finite and nonempty.
        \item There exists $w\in U_3\setminus\{\id\}$ such that either $R=\Inv(w)$ or $B=\Inv(w)$.
        \item There exists $w\in U_3$ such that the heads of the snakes defined by the bipartition $\Phi^+=R\sqcup B$ meet at $v_w$. 
    \end{enumerate}
    Moreover, if these statements are true, the elements $w$ in statements (2) and (3) are the same.
\end{proposition}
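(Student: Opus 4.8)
The plan is to prove the cycle of implications $(2)\Rightarrow(1)\Rightarrow(3)\Rightarrow(2)$, arranging things so that the element $w$ is carried through the argument and the final \emph{moreover} clause drops out of $(3)\Rightarrow(2)$. Throughout I may assume $R$ and $B$ are both nonempty, in accordance with the standing assumption of this section; if one of them is empty the other is all of $\Phi^+$ (hence infinite), and all three statements fail, so the equivalence is trivial. For the easy implication $(2)\Rightarrow(1)$, I would recall from \cref{rem:geodesics} that $\Inv(w)$ is precisely the set of roots $\beta$ whose edge $e_\beta$ lies on the geodesic from $v_\id$ to $v_w$; this geodesic has $\ell(w)$ edges, so $\Inv(w)$ is finite, and it is nonempty exactly when $w\neq\id$. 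Thus $R=\Inv(w)$ (resp.\ $B=\Inv(w)$) with $w\neq\id$ makes $R$ (resp.\ $B$) finite and nonempty.

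The crux is $(1)\Rightarrow(3)$, which I would attack as follows. Assume without loss of generality that $R$ is finite, so that $B$ is infinite. The key observation is that finiteness of $R$ forces every face of $\Gamma$ to have either $0$ or $2$ color changes, never exactly one. Indeed, the case analysis in the proof of \cref{change-at-source} shows that a face with exactly one color change has one of the two rays out of its source vertex entirely red, hence infinitely many red edges on its boundary, contradicting $|R|<\infty$; and a face with $0$ color changes all of whose edges are red would likewise force infinitely many red edges. Consequently, every face carrying a red edge has exactly two color changes, and since each red edge borders only two faces, there are at most $2|R|$ color-change faces in total. A snake traverses only faces with color changes, so each of the two snakes consists of finitely many segments; and since no face has a single color change, no segment runs off to $\partial\DD$. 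By \cref{prop:manifold} the pair of snakes is a connected $1$-manifold having $v_\id$ as an interior point; being finite and having no ends on $\partial\DD$, it must be a circle, so the two heads meet at a common vertex $v_w$, which is exactly (3).

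For $(3)\Rightarrow(2)$ together with the \emph{moreover} statement, suppose the heads meet at $v_w$. By \cref{finite-corollary} the edges enclosed by the pair of snakes are exactly those on the geodesic from $v_\id$ to $v_w$, that is, the edges $e_\beta$ with $\beta\in\Inv(w)$ by \cref{rem:geodesics}. The enclosed region and its complement are each monochromatic by \cref{prop:manifold}, so $\{R,B\}=\{\Inv(w),\,\Phi^+\setminus\Inv(w)\}$; in particular $R=\Inv(w)$ or $B=\Inv(w)$, and $w\neq\id$ since otherwise the enclosed set would be empty, contradicting $R,B\neq\varnothing$. Because $\Inv$ is always finite while exactly one of $R,B$ is infinite, the inversion set here must be the finite one, and since $w\mapsto\Inv(w)$ is injective, $w$ is uniquely determined as the element whose inversion set is whichever of $R,B$ is finite. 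This is precisely the $w$ occurring in (2), which settles the last sentence.

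I expect the main obstacle to be the implication $(1)\Rightarrow(3)$: one must rule out both that a snake wanders off to $\partial\DD$ and that it continues indefinitely while $R$ remains finite. The decisive step is the dichotomy that \emph{every face has $0$ or $2$ color changes}, which converts the combinatorial finiteness of $R$ into a finite bound on the number of color-change faces, and hence on the total length of the snakes; everything else then follows from the already-established $1$-manifold structure of \cref{prop:manifold} and the enclosure description of \cref{finite-corollary}.
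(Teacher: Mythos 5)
Your proof is correct, and the overall structure (the cycle $(2)\Rightarrow(1)$, $(1)\Rightarrow(3)$, $(3)\Rightarrow(2)$, with the \emph{moreover} clause extracted from $(3)\Rightarrow(2)$) matches the paper's; the implications $(3)\Rightarrow(2)$ and $(2)\Rightarrow(1)$ are handled exactly as the paper handles them, via \cref{finite-corollary} and \cref{rem:geodesics}. Where you genuinely diverge is the crux $(1)\Rightarrow(3)$. The paper's argument is short and constructive: choose a root $\beta\in R$ of maximum depth, let $v^\star$ be the deeper endpoint of $e_\beta$; maximality forces both upper edges at $v^\star$ to be blue, so two snake segments terminate at $v^\star$, and (by monotonicity they cannot belong to the same snake) $v^\star$ is the common head, explicitly identifying $w$. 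You instead prove a global dichotomy --- when $R$ is finite, every face has $0$ or $2$ color changes (ruling out the one-color-change case of \cref{change-at-source}, since that case puts infinitely many red edges on the face) --- then bound the number of color-change faces by $2|R|$, conclude both snakes are compact with no ends on $\partial\DD$, and invoke the classification of compact connected $1$-manifolds without boundary to get a circle, forcing the heads to coincide. Both are valid. The paper's route is more economical and pins down the meeting vertex concretely; yours buys a quantitative bound on the total number of snake segments and reuses the $1$-manifold structure of \cref{prop:manifold}, at the cost of being longer and needing one small step you leave implicit: that two arcs sharing the endpoint $v_\id$ whose union is a circle must also share their other endpoints, which follows from the fact (established in the proof of \cref{prop:manifold}) that exactly two snake edges meet at every vertex, since otherwise a head of one snake would be an interior point of the other and three snake edges would meet there.
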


\begin{proof}
    Assume (3) holds.
    Then (2) follows from \cref{finite-corollary} and \cref{rem:geodesics}.

    Since inversion sets are finite, (1) follows from (2).

    Assume (1) holds, and suppose $R$ is finite.
    Choose a root $\beta\in R$ of maximum depth.
    Let $v^{\star}$ be the vertex whose lower edge is $e_{\beta}$.
    By assumption, the two upper edges of $v^{\star}$ are in $B$.
    Then $v^{\star}$ is the terminal vertex of two snake edges, so it must be the head of both snakes, and (3) follows.
\end{proof}

\begin{remark}
    The equivalence of statements (1) and (2) in \cref{prop:finite_snake} has an analogue for any Coxeter group by \cite[Lemma~4.1]{dyer:2019weak}.
\end{remark}

\begin{lemma}\label{lem:uniq_limit}
    If $R$ and $B$ are both infinite, then each snake has a unique limit point on $\partial \DD$.
\end{lemma}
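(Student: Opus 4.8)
The plan is to show that the vertices the snake passes through lie on a single geodesic ray in the tree $\Gamma$ and that this ray — and hence the snake tracking it — converges to a unique ideal point of $\partial\DD$. Fix one of the two snakes and call it $\mathbf S$. Since $R$ and $B$ are both infinite, \cref{prop:finite_snake} shows the two heads do not meet at a vertex of $\Gamma$, so $\mathbf S$ is an infinite curve that leaves every compact subset of $\DD^\circ$; in particular all of its accumulation points lie on $\partial\DD$, and the content of the lemma is that there is only one.

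First I would record that the transit vertices of $\mathbf S$ form a chain in weak order. By \cref{monotonic-prop} we may choose $\mathbf S$ within its isotopy class so that it crosses each hyperplane $\wh H_\beta$ at most once and passes through a sequence of vertices $v_\id=v_{w_0},v_{w_1},v_{w_2},\dots$ that is monotonically increasing in Coxeter length and forms a chain in weak order, i.e.\ $\Inv(w_0)\subsetneq\Inv(w_1)\subsetneq\cdots$. Because $\Gamma$ is a tree and $\Inv(w)$ is exactly the edge set of the unique geodesic from $v_\id$ to $v_w$, nested inversion sets correspond to nested geodesics: the geodesic to $v_{w_{k-1}}$ is an initial segment of the geodesic to $v_{w_k}$. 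Consecutive transit vertices $v_{w_{k-1}},v_{w_k}$ are joined by the snake segment $\cS_{g_k}$ inside the closure of a single face $g_k$, with $v_{w_{k-1}}=v_{\yy(g_k)}$ and $v_{w_k}=v_{\zz(g_k)}$; the boundary $\partial g_k$ is the bi-infinite path through the coset $\yy(g_k)U_{s_i,s_j}$, which is a bi-infinite geodesic of the tree, and by the nesting its sub-path from $v_{\yy(g_k)}$ to $v_{\zz(g_k)}$ is precisely the portion of $\rho$ between these vertices. Thus all the $v_{w_k}$ lie on a single infinite geodesic ray $\rho$ emanating from $v_\id$, and each segment $\cS_{g_k}$ is confined to the part of $g_k$ cut off by this sub-path of $\rho$.

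Next I would control the accumulation set of $\mathbf S$ by nested half-planes. Let $\rho$ cross the hyperplanes $\wh H_{\delta_1},\wh H_{\delta_2},\dots$ in this order as it travels outward. Each $\wh H_{\delta_j}$ separates $\DD$ into the side containing $v_\id$ and a complementary region whose intersection with $\partial\DD$ is an arc $I_j$, and these regions are nested. Because the transit vertices march out along $\rho$, every such hyperplane is eventually crossed by $\mathbf S$, and since $\mathbf S$ crosses each hyperplane at most once (\cref{monotonic-prop}), once $\mathbf S$ has crossed $\wh H_{\delta_j}$ its remaining tail stays inside the complementary region. Hence every accumulation point of $\mathbf S$ on $\partial\DD$ lies in $\bigcap_j\overline{I_j}$. (In the terminal case, where $\mathbf S$ ends with a single-color-change segment running out to the ideal point of a face, $\rho$ is the monochromatic boundary ray of that face and the same nested-region argument applies.)

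It then remains to show $\bigcap_j\overline{I_j}$ is a single point. The $\wh H_{\delta_j}$ are distinct geodesics of the Farey tessellation successively crossed by $\rho$, so the arcs $I_j$ strictly decrease; invoking the discreteness of the tessellation together with \cref{lem:limitpoints} (which identifies the accumulation points of $\wh\Phi^+$ as exactly $\partial\DD$) one shows their lengths tend to $0$, whence $\bigcap_j\overline{I_j}=\{\xi\}$ for a single ideal point $\xi$. Therefore $\mathbf S$ has the unique limit point $\xi$. The main obstacle is precisely this last step: establishing that the nested arcs shrink to a point rather than to a positive-length subarc. This is the only place where genuinely hyperbolic (rather than purely combinatorial) geometry is essential, as it relies on the discreteness of the Farey tessellation to rule out infinitely many edge-geodesics accumulating across a fixed arc.
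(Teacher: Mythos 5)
Your overall strategy is the same as the paper's: the paper also splits into the case of infinitely many snake segments, where it uses that the sources $\yy(f^{(1)}),\yy(f^{(2)}),\ldots$ form a chain in weak order, and the terminal case, where it shows (via \cref{finite-corollary}) that the last face has a single color change so that the tail of the snake lies inside one face, whose closure meets $\partial\DD$ in a single point. Your nested-arc picture is a correct unpacking of why a weak-order chain of vertices should have a unique accumulation point. The problem is the step you yourself flag: you never prove that the nested arcs $\overline{I_1}\supseteq\overline{I_2}\supseteq\cdots$ degenerate to a single point, and that is exactly where the geometric content of the lemma lives. This is a genuine gap, not a routine verification: nested disjoint chords of $\DD$ can perfectly well converge to a chord of positive length (take chords whose endpoints converge monotonically to two distinct boundary points), so nestedness alone, plus a vague appeal to ``discreteness of the tessellation,'' proves nothing. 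Some specific property of the root system must enter, and your write-up stops at precisely the point where it is needed.

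The gap can be closed with the tool you name, \cref{lem:limitpoints}, but you must connect it to chord lengths. Suppose $\bigcap_j \overline{I_j}$ were an arc $[a,b]$ with $a\neq b$. Since the arcs decrease to $[a,b]$, the endpoints of the chords $\wh H_{\delta_j}$ converge to $a$ and $b$, so the Euclidean lengths of these chords are eventually bounded below by a positive constant. By \cref{prop:affine_orthogonal_subspaces}(2), $\wh H_{\delta_j}$ is the chord through the two points where the tangent lines from the pole $\wh\delta_j$ touch $\partial\DD$; as a point of $\conv(\Delta)\setminus\DD^{\circ}$ approaches $\partial\DD$, these two tangency points collide, so a lower bound on the chord length forces $\wh\delta_j$ to stay at distance bounded below from $\partial\DD$. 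The infinitely many distinct rescaled roots $\wh\delta_j$ would then lie in a compact subset of $\conv(\Delta)$ disjoint from $\partial\DD$ and hence accumulate at a point not on $\partial\DD$, contradicting \cref{lem:limitpoints}. (Alternatively, \cref{lem:heightnorm} together with \cref{lem:heights} gives the same finiteness statement.) Two smaller remarks: in the terminal case your parenthetical is essentially right but leans on an unproved implication---that a single snake ending at a vertex forces both heads to meet there---which is exactly what the paper establishes in its second case before concluding the last face has only one color change; and once that is in place, the tail of the snake is trapped in one face, whose closure meets $\partial\DD$ only at the corresponding ideal vertex, so no nested-arc argument is even needed there.
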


\begin{proof}
    First suppose a snake $\mathbf{S}$ is the union of an infinite set of snake segments $\cS_{f^{(1)}}, \cS_{f^{(2)}},\ldots$, ordered so that $\yy(f^{(i+1)}) = \zz(f^{(i)})$ for all $i$.
    By definition of snake segment, the elements $\yy(f^{(1)}),\yy(f^{(2)}),\ldots$ form an infinite chain in weak order.
    As an infinite set of points in the compact set $\DD$, there is an accumulation point for the corresponding set of vertices $v_w$.
    Since the union of the ideal triangles $T_w$ contains the interior of the disk, any accumulation point for a subset of $\{v_w: w\in U_3\}$ must be on $\partial \DD$.
    Since the set $\{\yy(f^{(1)}),\yy(f^{(2)}),\ldots\}$ is a chain in the weak order, this accumulation point is unique, and it must be the limit point of $\mathbf{S}$.

    Now suppose $\mathbf{S}$ is the union of finitely many snake segments $\cS_{f^{(1)}},\ldots,\cS_{f^{(k)}}$, where 
    $\yy(f^{(i+1)})=\zz(f^{(i)})$ for all $i$.
    If the boundary of $f^{(k)}$ has a second color change, then the snake terminates at a vertex $v_{\zz(f^{(k)})}$.
    In this case, the two upper covers of $\zz(f^{(k)})$ have the opposite color as the lower cover of $\zz(f^{(k)})$.
    Hence, there is a second snake terminating at $\zz(f^{(k)})$.
    By \cref{finite-corollary}, we conclude that either $R$ or $B$ is finite, a contradiction.
    Thus, we may assume that $f^{(k)}$ does not have a second color change, so the snake segment $\cS_{f^{(k)}}$ goes to the boundary.        
\end{proof}

\begin{lemma}
  Assume $E_R$ and $E_B$ both contain
  three or more edges of the same depth. Then $R$ and $B$ are both infinite and the snakes $\mathbf{S}_1,\mathbf{S}_2$ limit to distinct points on $\partial\DD$. 
\end{lemma}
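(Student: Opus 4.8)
The plan is to prove the two assertions separately, treating infiniteness first and then distinctness of the limit points.

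For the claim that $R$ and $B$ are infinite, the key observation is that any inversion set contains at most one edge of each depth. Indeed, by \cref{rem:geodesics} the edges of $\Inv(w)$ are exactly the edges on the geodesic from $v_\id$ to $v_w$, and along this geodesic the $i$-th edge has its far endpoint at distance $i$ from $v_\id$, so it is the unique edge of depth $i$ on the geodesic. Since $E_R$ contains at least three edges of depth $d$, the set $R$ is nonempty and is not the inversion set of any element, and likewise for $B$. Thus condition (2) of \cref{prop:finite_snake} fails, so condition (1) fails as well, and since $R$ and $B$ are both nonempty they must both be infinite.

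Because $R$ and $B$ are infinite, \cref{lem:uniq_limit} equips each snake with a unique limit point, $p_1$ and $p_2$, on $\partial\DD$. To prove $p_1\neq p_2$, I would argue by contradiction, assuming $p_1=p_2=p$. Then the pair of snakes, a connected $1$-manifold both of whose ends approach $p$, becomes a Jordan curve in $\overline\DD$ once $p$ is adjoined, and this curve meets $\partial\DD$ only at $p$. It therefore bounds a region $\Omega$ whose closure meets $\partial\DD$ in the single point $p$; by \cref{prop:manifold} all edges lying in $\Omega$ share one color, say blue, and every blue edge lies in $\Omega$. Consequently the blue edges can accumulate only at $p$. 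The goal is to show that this pins the blue edges so tightly that $E_B$ has at most two edges of each depth, contradicting the hypothesis that it has at least three of depth $d$. I would first locate $p$ using monotonicity: by \cref{monotonic-prop} each snake passes through a chain of vertices in weak order, and such a chain, having nested inversion sets, lies on a single geodesic ray from $v_\id$ whose end maps to the snake's limit point. The two snakes meet only at $v_\id$, so their rays are distinct, and by the structure of the Farey tessellation (cf.\ \cref{lem:limitpointsrank2}) two distinct rays from $v_\id$ can limit to the same boundary point only when that point is an ideal vertex, i.e.\ a cusp. Thus $p$ is the ideal vertex of some face $f^{\star}$, and the two snakes approach $p$ along the two sides of the boundary path $\partial f^{\star}$. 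The blue bubble is then confined to this path: any hanging subtree attached to $\partial f^{\star}$ limits to an arc of $\partial\DD$ disjoint from $p$, so it cannot lie in $\Omega$, and the monotone snake running along each side of $\partial f^{\star}$ separates every such subtree from the bubble. Hence the only blue edges are those of $\partial f^{\star}$; since $\partial f^{\star}$ is a bi-infinite path whose source is its unique vertex closest to $v_\id$, each of its two rays supplies at most one edge of any given depth, so $E_B$ has at most two edges of each depth. This contradicts the hypothesis and forces $p_1\neq p_2$.

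I expect the main obstacle to be the last step: rigorously confining the bubble $\Omega$ to $\partial f^{\star}$. The delicate point is that a snake need not pass through every vertex of its limiting ray, so one must verify that the snake still cuts off \emph{every} hanging subtree from $\Omega$, including those rooted at vertices the snake skips over inside a single snake segment. This is exactly where the planar, monotone structure of the snakes and the assumption that both limit to the single point $p$ must be combined with care; the infiniteness of $R$ and $B$ (ensuring the snakes reach $\partial\DD$ rather than meeting at a vertex, via \cref{finite-corollary}) is what makes the bubble picture available in the first place.
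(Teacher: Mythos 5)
Your first paragraph (the infiniteness claim) is correct and is essentially the paper's own argument: a finite nonempty parabolic biclosed set is an inversion set by \cref{prop:finite_snake}, and inversion sets contain at most one edge of each depth by \cref{rem:geodesics}.

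The second half, however, has a genuine gap. The step ``the two snakes meet only at $v_\id$, so their rays are distinct'' is a non sequitur, and the conclusion you draw from it --- that a common limit point $p$ must be an ideal vertex of the Farey tessellation --- can fail. Consider the configuration in which the enclosed (bubble) color consists exactly of the edges of an infinite geodesic ray $v_\id=u_0\to u_1\to u_2\to\cdots$ whose end is \emph{not} an ideal vertex (for instance, the union of the inversion sets of the prefixes of $s_1s_2s_3s_1s_2s_3\cdots$). At each $u_i$ the unique off-path edge points to one side of the path; the snake running along the left side of the path touches exactly those $u_i$ whose off-path edge points left, and the right snake touches the others. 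The two snakes are indeed disjoint away from $v_\id$, yet the vertices visited by \emph{both} snakes lie on the \emph{same} geodesic ray, and both snakes limit to its non-cusp end. So within your contradiction argument, the case ``$p$ is not an ideal vertex'' genuinely occurs a priori and is never addressed: your dichotomy silently assumes the bubble wraps around a face. The gap is fillable --- in the missed case the bubble has exactly one edge per depth, which also contradicts the hypothesis, and this is precisely the first alternative of \cref{same-point-description} --- but proving that the bubble must then be a single geodesic path requires an argument you have not supplied. A secondary issue: the fact that two distinct ends of the tree converge to the same point of $\partial\DD$ only at ideal vertices is nowhere proved in the paper; the ``cf.\ \cref{lem:limitpointsrank2}'' does not discharge it.

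For comparison, the paper's proof is direct and avoids this classification entirely. Given three red edges of the same depth, their hyperplanes $\wh H_{\beta_1},\wh H_{\beta_2},\wh H_{\beta_3}$ cut off pairwise disjoint regions not containing $O$; by \cref{monotonic-prop} each snake can cross at most one of them, so with only two snakes some $\wh H_{\beta}$ is crossed by neither. Then \cref{prop:manifold} forces everything beyond $\wh H_{\beta}$, including a nontrivial arc of $\partial\DD$, to be red; symmetrically there is a blue arc. Since a pair of snakes with $p_1=p_2=p$ would leave all of $\partial\DD\setminus\{p\}$ on one side, hence of one color, the two arcs force $p_1\neq p_2$. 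If you want to salvage your approach, you must either add the missing path-bubble case or, better, adopt this direct argument.
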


\begin{proof}
    By \Cref{finite-corollary} and \Cref{prop:finite_snake}, 
    if $R$ (respectively, $B$) is finite, then $R$ (respectively, $B$) contains at most one edge of each depth. Hence since $E_R$ and $E_B$ each contain two or more edges of the same depth for some choice of depth, we know that $R$ and $B$ both must be infinite.

  Now let $e_1,\dots e_r$ be red edges of $\Gamma$ that all have the
  same depth $d$, with $r\geq 3$. 
    One of these edges, say $e_{\beta}$, has the property that $\wh{H}_{\beta}$ is not crossed by the pair of snakes.
By \cref{prop:manifold}, all tree edges 
on the opposite side of $\wh{H}_{\beta}$ from 
$O$ have the same color as $e_{\beta}$, implying that all points on $\partial \DD $ on the opposite side of $\wh{H}_{\beta }$ from $O$ also have this same color. 
Hence, there is a nontrivial arc of $\partial\DD$ of that color.  We may use this reasoning to deduce that there are nontrivial arcs of $\partial \DD $ of both colors, implying the desired result.
\end{proof}

\begin{remark}\label{same-point-description}
If the pair of snakes both limit to the same point in $\partial \DD $, then one of the following must hold: 
\begin{itemize}
\item The set of edges of $\Gamma$ contains exactly one edge at depth $d$ for each integer $d\geq 0$. 
  \item There exists an integer $j\geq 0$ such that for every integer $d\geq 0$, the number of edges enclosed by the snakes at depth $d$ is $1$ if $d\leq j$ and is $2$ if $d>j$. 
\end{itemize} 
\end{remark}

The preceding remark implies that the biclosed sets having one edge at depth $d$ for all $d$ are in bijection (up to complementation)
  with paths from $v_\id$ to $\partial \DD$. 
  Similarly, the biclosed sets having two edges at distance $d$ from $v_\id$ for all $d>j$ for some $j\ge 0$ are in bijection (up to
  complementation) with the rank 2 parabolic subgroups of $U_3$ with a finite (possibly empty) set of elements deleted, where this finite set corresponds to a path from the source vertex of the parabolic subgroup to an element of the parabolic subgroup. 

\section{Separating Planes and Biclosed Sets}\label{sec:biclosed}

This section is devoted to proving \cref{thm:main}.  To this end, let us now assume that the sets $R$ and $B$ in the bipartition $R\sqcup B=\Phi^+$ are biclosed (not just parabolic biclosed).  We already know that $R$ and $B$ are weakly separable if either one is finite, so we will assume they are both infinite. As above, this bipartition gives rise to two snakes $\mathbf{S}_1$ and $\mathbf{S}_2$. For $i\in\{1,2\}$, let $p_i$ be the limit point of $\mathbf{S}_i$ on $\partial\DD$, and let $\ell_i$ be the line in $\AA$ that is tangent to $\DD$ at $p_i$. 

We continue the convention that edges in $E_R$ are colored red and edges in $E_B$ are colored blue, so that the pair of snakes separates the red edges from the blue edges. Note that $\DD \setminus (\mathbf{S}_1\cup\mathbf{S}_2\cup\{p_1,p_2\})$ has two components.
We say a point in $\partial\DD \setminus \{p_1,p_2\}$ is  \dfn{red} or \dfn{blue} according to the side of the pair of snakes on which it lies. 

\begin{definition}\label{def:Hsnake}
Let $H_\snake$ be the hyperplane in $V$ such that the line $\wh H_\snake=H_\snake\cap\AA$ intersects $\partial\DD$ in the points $p_1$ and $p_2$. If $p_1=p_2$, then $\wh H_\snake$ is tangent to $\partial\DD$.
\end{definition} 

\begin{figure}[]
  \begin{center}
    \includegraphics[height=66.5mm]{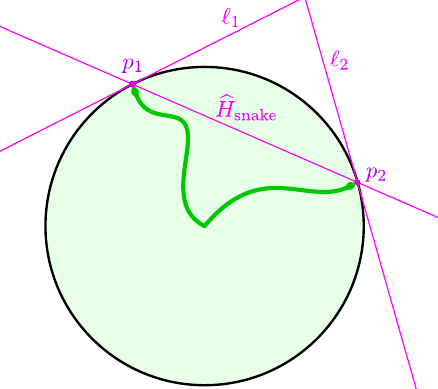}\qquad
    \includegraphics[height=66.5mm]{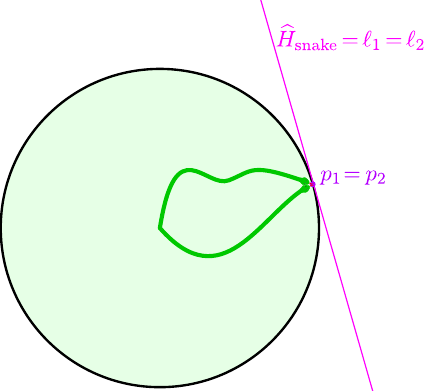}
  \end{center}
\caption{The line $\wh H_\snake$ when $p_1\ne p_2$ (left) and when
  $p_1=p_2$ (right). }
\label{fig-4-1} 
\end{figure}

The \dfn{red side} (respectively, \dfn{blue side}) of $\wh H_\snake$ is the side containing red points (respectively, blue points) of $\partial\DD\setminus\{p_1,p_2\}$. We say a root $\alpha$ is \dfn{red} if $e_\alpha\in E_R$ and \dfn{blue} if $e_\alpha\in E_B$. Our goal is to show that a rescaled root $\wh\alpha \in \wh\Phi^+\setminus \wh H_\snake$ is red if and only if $\wh\alpha$ is on the red side of $\wh H_\snake$.

\begin{lemma}\label{lem:heights}
  For every positive integer $N$, there are finitely many roots $\gamma$ with
  $\mathrm{height}(\gamma)\leq N$. If $\gamma \in \Span_{\geq
    0}\{\alpha,\beta\}$, then $\mathrm{height}(\gamma) >
\min\{  \mathrm{height}(\alpha),\mathrm{height}(\beta)\}$.
\end{lemma}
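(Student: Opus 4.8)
The statement to prove has two parts. The first asserts that for every positive integer $N$, only finitely many roots $\gamma$ satisfy $\height{\gamma} \leq N$; the second asserts strict monotonicity of height under the nonnegative-span operation, namely that $\gamma \in \spange\{\alpha,\beta\}$ forces $\height{\gamma} > \min\{\height{\alpha},\height{\beta}\}$.

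For the first part, the plan is to exploit \cref{lem:heightnorm}, which gives $\height{\gamma} = \qform(\wh\gamma)^{-1/2}$. A bound $\height{\gamma} \leq N$ is therefore equivalent to $\qform(\wh\gamma) \geq N^{-2}$, meaning the rescaled root $\wh\gamma$ is bounded away from the disk $\DD$ (where $\qform \leq 0$) by a fixed positive margin, hence lies in a compact subset of $\AA \setminus \DD$. Since $\wh\Phi^+$ can accumulate only on $\partial\DD$ by \cref{lem:limitpoints}, a compact region disjoint from $\partial\DD$ can contain only finitely many rescaled roots; otherwise an infinite subset would have an accumulation point, which would have to lie in the compact set yet also on $\partial\DD$, a contradiction. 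This handles finiteness cleanly.

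For the second part, I would work directly with heights as the sum of simple-root coordinates. Write $\alpha$, $\beta$ in the basis $\Delta$ with nonnegative coordinates, so $\height{\alpha}$ and $\height{\beta}$ are the respective coordinate sums. Any $\gamma \in \spange\{\alpha,\beta\}$ has the form $\gamma = s\alpha + t\beta$ with $s,t \geq 0$, and by linearity $\height{\gamma} = s\,\height{\alpha} + t\,\height{\beta}$. The issue is that this is a nonnegative combination, not a convex one, so I cannot immediately conclude it exceeds the minimum. The key leverage is that $\gamma$ is itself a \emph{root}, so $\qform(\gamma) = 1$; combined with the fact that $\alpha,\beta$ are roots (norm $1$) and the signature-$(2,1)$ geometry, this pins down the relationship between $s,t$ and the inner product $\langle\alpha,\beta\rangle$. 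Since $\Span\{\alpha,\beta\}$ meets $\DD$ by \cref{cor:rank2_subsystem}, the form restricted to this plane is indefinite, which will force the inner product $\langle\alpha,\beta\rangle$ to be negative when $\alpha,\beta$ are distinct relatively-simple-type extreme roots, and this negativity is what pushes $s + t$ above $1$.

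The main obstacle I anticipate is the second part: making the strict inequality $\height{\gamma} > \min\{\height{\alpha},\height{\beta}\}$ come out cleanly rather than just $\geq$. The cleanest route is probably geometric via \cref{lem:rank2geom}: letting $\Phi' = \Span\{\alpha,\beta\} \cap \Phi$ be the rank~$2$ linear subsystem, the rescaled root $\wh\gamma$ lies on the segment $\wh H' \cap (\AA\setminus\DD)$, and part~(2) of \cref{lem:rank2geom} says that moving \emph{toward} $\DD$ along $\wh H'$ strictly increases height. Since $\gamma$ is a proper nonnegative combination of $\alpha$ and $\beta$, its rescaled version $\wh\gamma$ lies strictly between $\wh\alpha$ and $\wh\beta$ on this line, hence strictly closer to $\DD$ than at least one of them; applying \cref{lem:rank2geom}(2) to whichever endpoint is farther from $\DD$ yields $\height{\gamma}$ strictly greater than the height of that endpoint, and a fortiori greater than the minimum. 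I would need to check carefully that $\wh\gamma$ genuinely lies in the open segment between $\wh\alpha$ and $\wh\beta$ (ruling out the degenerate cases where $\gamma$ is proportional to $\alpha$ or $\beta$), but this follows from $\alpha,\beta$ being distinct directions and $\gamma$ being a genuine positive combination of both.
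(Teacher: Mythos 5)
Your proof is correct in substance, but the two halves compare differently with the paper's proof. For the finiteness claim, you take a genuinely different route: the paper simply observes that every positive root has nonnegative \emph{integer} coordinates in the basis $\Delta$ (visible from the integer matrices in \cref{def:coordinates}), so there are only finitely many tuples $(a_1,a_2,a_3)\in\ZZ_{\geq 0}^3$ with $a_1+a_2+a_3\leq N$. Your argument via \cref{lem:heightnorm} and \cref{lem:limitpoints} (height $\leq N$ forces $\qform(\wh\gamma)\geq N^{-2}$, so the rescaled roots in question sit in a compact subset of $\conv(\Delta)$ disjoint from $\partial\DD$, where they cannot accumulate) is also valid --- it needs the additional observation that distinct positive roots have distinct rescalings, which holds here since no two positive roots are proportional. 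The paper's argument is a one-liner but leans on integrality/crystallographicity; yours is longer but would survive in settings where root coordinates need not be integers. For the second claim, your final geometric argument is essentially the paper's: both place $\wh\gamma$ on the segment $\conv\{\wh\alpha,\wh\beta\}$ and invoke \cref{lem:rank2geom}(2). (Your initial algebraic attempt via $\qform(\gamma)=1$ and the sign of $\langle\alpha,\beta\rangle$ is correctly abandoned; the geometric route is the one that closes.)

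One step in your second part is misstated, though it is repairable. You propose to apply \cref{lem:rank2geom}(2) to ``whichever endpoint is farther from $\DD$.'' When $\conv\{\wh\alpha,\wh\beta\}\setminus\DD$ is disconnected (the segment crosses $\DD$, which \cref{cor:rank2_subsystem} permits), $\wh\gamma$ lies in one of the two components, and the farther endpoint may lie in the \emph{other} component. In that case the hypothesis of \cref{lem:rank2geom}(2) --- that $\wh\gamma$ lies between that endpoint and $\DD\cap\wh H$ --- fails, since $\DD\cap\wh H$ separates them, so the citation does not apply as stated (the desired inequality still happens to be true, but only via the underlying monotonicity of $\qform$ along the line, not via the lemma). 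The paper's proof handles this by splitting into the connected and disconnected cases and, in the disconnected case, applying \cref{lem:rank2geom}(2) to the endpoint lying in the \emph{same} component as $\wh\gamma$: this gives $\height{\gamma}$ strictly greater than that endpoint's height, which is trivially at least $\min\{\height{\alpha},\height{\beta}\}$. Replacing ``farther endpoint'' with ``same-component endpoint'' fixes your argument; also note that, like the paper, you implicitly need $\wh\gamma$ to lie strictly between $\wh\alpha$ and $\wh\beta$, i.e., $\gamma$ is not proportional to $\alpha$ or $\beta$, which you correctly flag.
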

\begin{proof}
    The first claim follows since there are finitely many tuples $(a_1,a_2,a_3)\in\ZZ^3_{\geq 0}$ such that $a_1+a_2+a_3\leq N$. For the second claim, consider $\conv\{\wh\alpha,\wh\beta\} \setminus \DD$. By \Cref{lem:rank2geom}(ii), if this set is connected, then $\height \alpha < \height\gamma < \height\beta $ or $\height\alpha > \height \gamma > \height\beta$. If the set is disconnected, then $\wh\gamma$ is in the same component as either $\wh\alpha$ or $\wh \beta$. If $\wh\gamma$ is in the same component as $\wh\alpha$, then $\height\alpha<\height\gamma$; if $\wh\gamma$ is in the same component as $\wh\beta$, then $\height\beta<\height\gamma$. 
\end{proof}

\begin{figure}
    \centering
    \includegraphics[height=77.162mm]{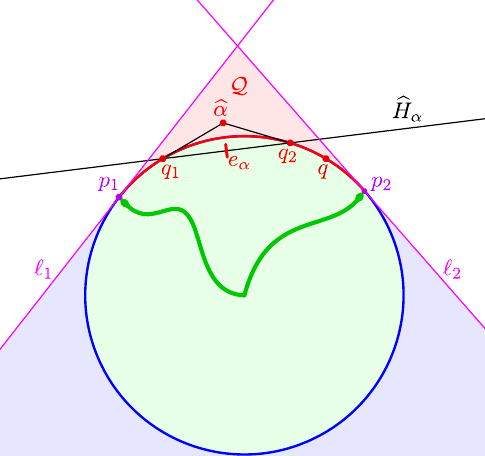}
    \caption{An illustration of the proof of \Cref{lem:snakerootsboundary}.}
    \label{fig:regionoffsnakes}
\end{figure}

\begin{lemma}\label{lem:snakerootsboundary}
  For every point
  $q\in \partial\DD \setminus\{p_1,p_2\}$, there is a neighborhood of
  $q$ in which every rescaled root has the same color as $q$.
\end{lemma}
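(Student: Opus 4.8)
The plan is to exploit the fact that $q \in \partial\DD \setminus \{p_1,p_2\}$ lies strictly on one side of the pair of snakes, say the red side, and then argue that every rescaled root close enough to $q$ must be red. First I would observe that since $q \neq p_1, p_2$ and $q$ lies in the open red region, there is an open arc $A \subseteq \partial\DD$ around $q$ that avoids $p_1$ and $p_2$ and lies entirely on the red side of the pair of snakes. I want a neighborhood $U$ of $q$ in $\AA$ such that every rescaled root $\wh\alpha \in U \cap \wh\Phi^+$ is red. By \Cref{lem:limitpoints}, the only accumulation points of $\wh\Phi^+$ lie on $\partial\DD$, and the rescaled roots accumulate \emph{toward} the boundary from outside $\DD$, so rescaled roots near $q$ sit just outside $\DD$ near the arc $A$.

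The key step is to connect the color of a rescaled root $\wh\alpha$ (which is defined via the edge $e_\alpha$ of $\Gamma$, hence via which side of the pair of snakes the edge lies on) to the geometric position of $\wh\alpha$ near $q$. I would use \Cref{cor:circle_decomposition}: for a rescaled root $\wh\alpha \in \AA \setminus \DD$, the line $\wh H_\alpha$ meets $\partial\DD$ in two points $p,q'$, and the region $\wh{\cH}_\alpha^+ \cap \DD$ cut off away from $O$ is contained in the triangle $\conv(\wh\alpha, p, q')$. As $\wh\alpha \to q$, this chord $\wh H_\alpha \cap \DD$ shrinks to a small arc of $\partial\DD$ near $q$, so the two boundary points $p, q'$ both approach $q$ and hence lie in the red arc $A$ once $\wh\alpha$ is close enough. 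By \Cref{prop:manifold} and the definition of the red side, all the boundary points on the far side of $\wh H_\alpha$ share the color determined by the edge $e_\alpha$; combined with \Cref{lem:limitpointsrank2} (the intersection points of the hyperplane spanned by a rank $2$ subsystem with $\partial\DD$ are accumulation points of that subsystem's rescaled roots), the arc cut off by $\wh H_\alpha$ must be monochromatic and match the color of $e_\alpha$. Since that arc lies in the red region $A$, the edge $e_\alpha$ must be red.

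More concretely, I would argue by contradiction: suppose there is a sequence $\wh\alpha_n \to q$ with $e_{\alpha_n}$ blue for all $n$. Then for each $n$, the chord $\wh H_{\alpha_n} \cap \DD$ separates $O$ from a small boundary arc $A_n \ni$ (points near $q$), and by \Cref{prop:manifold} every tree edge on the far side of $\wh H_{\alpha_n}$ from $O$ is blue, so the boundary arc $A_n$ is entirely blue. But $A_n$ contains points arbitrarily close to $q$ as $n \to \infty$ (since the chord shrinks to $q$), so $q$ would be a limit of blue boundary points. This forces $q$ to lie in the closure of the blue region, contradicting the fact that $q$ is in the \emph{open} red arc $A$ (here I am using that the pair of snakes, together with its limit points $p_1, p_2$, cleanly separates $\partial\DD \setminus \{p_1,p_2\}$ into an open red part and an open blue part, as recorded just before \Cref{def:Hsnake}).

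The main obstacle I anticipate is making rigorous the claim that the boundary arc cut off by $\wh H_{\alpha_n}$ shrinks to $q$ and lies on the correct side of the snakes — that is, controlling the geometry of the chords $\wh H_{\alpha_n} \cap \DD$ uniformly as $\wh\alpha_n \to q$, and ensuring the separation of $\partial\DD$ by the snakes is genuinely into two \emph{open} monochromatic arcs so that $q$ being red is an open condition. I would handle the shrinking-chord claim via \Cref{prop:affine_orthogonal_subspaces}(1)–(2) and \Cref{lem:heightnorm}: as $\wh\alpha_n \to q \in \partial\DD$, the norm $\qform(\wh\alpha_n) \to 0$, so $\height{\alpha_n} \to \infty$, and the two tangency points defining $\wh H_{\alpha_n}$ both converge to $q$, forcing the chord to collapse onto $q$. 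The separation-into-open-arcs claim follows from \Cref{prop:manifold} together with \Cref{lem:uniq_limit}, which guarantees each snake has a single well-defined limit point, so that $\{p_1, p_2\}$ are exactly the two boundary points where red meets blue.
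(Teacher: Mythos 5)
Your geometric setup matches the paper's: rescaled roots near $q$ have dual chords $\wh H_\alpha$ cutting off small arcs of $\partial\DD$ that avoid $p_1,p_2$, and one wants to conclude that the color of $e_\alpha$ agrees with the color of that arc. But there is a genuine gap at exactly the step where you pass from ``$e_{\alpha_n}$ is blue'' to ``by \cref{prop:manifold} every tree edge on the far side of $\wh H_{\alpha_n}$ from $O$ is blue, so the boundary arc $A_n$ is entirely blue.'' \Cref{prop:manifold} does not give this: it says that two edges lying in the same connected component of the complement of the \emph{pair of snakes} have the same color; it says nothing about the two sides of a root hyperplane $\wh H_{\alpha_n}$. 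A priori, a snake could cross $\wh H_{\alpha_n}$, wander into the far side, and cross back out; then the far side would meet both complementary regions of the snakes and could contain edges of both colors, while the limit points $p_1,p_2$ still lie outside the cut-off arc. In that scenario your contradiction evaporates: the arc $A_n$, which avoids $p_1,p_2$ and contains points near $q$, is red simply by its position on the circle (boundary colors are constant on the two arcs into which $p_1,p_2$ divide $\partial\DD$), and nothing forces $A_n$ to inherit the color of $e_{\alpha_n}$. The appeal to \cref{lem:limitpointsrank2} does not repair this: that lemma concerns accumulation points of rescaled roots of a rank~$2$ subsystem and has no bearing on whether the far side of $\wh H_{\alpha_n}$ is monochromatic (in $U_3$, a root hyperplane $H_\alpha$ in fact contains no roots at all).

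The missing ingredient is \cref{monotonic-prop}: each snake may be isotoped so that it crosses each hyperplane $\wh H_\beta$ at most once. This is precisely what the paper uses at the corresponding point. Granting it, if a snake $\mathbf{S}_i$ crossed $\wh H_{\alpha_n}$ it could never return, so its limit point $p_i$ would lie in the cut-off arc---impossible once that arc avoids $p_1,p_2$. Hence neither snake crosses $\wh H_{\alpha_n}$, the entire far side of $\wh H_{\alpha_n}$ (including $e_{\alpha_n}$ and the arc $A_n$) lies in a single complementary region of the snakes, and only then does \cref{prop:manifold} transfer the arc's color to $e_{\alpha_n}$. With this step inserted your argument closes and becomes essentially the paper's proof; the paper also organizes the geometry more cleanly by working with the fixed region $\cQ$, the connected component of $\AA\setminus(\ell_1\cup\ell_2\cup\DD^{\circ})$ containing $q$, rather than with a sequence $\wh\alpha_n\to q$, which removes the need for the limiting analysis of shrinking chords via \cref{lem:heightnorm}.
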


\begin{proof}
Let $\cQ$ be the connected component of $\AA\setminus (\ell_1\cup \ell_2 \cup \DD^{\circ})$ containing $q$. Every rescaled root $\wh\alpha\in \cQ$ has the property that the hyperplane $\wh H_\alpha$ intersects $\partial\DD$ in two points $q_1,q_2$ such that $\conv(\wh\alpha,q_1,q_2) \cap \partial \DD$ consists of points that are all the same color as $q$ (see \Cref{fig:regionoffsnakes}). By \Cref{monotonic-prop}, if a snake $\mathbf{S}_i$ passed through $\wh H_\alpha$, then its limit point $p_i$ would be in $\conv(\wh\alpha,q_1,q_2)$. This would contradict the monochromaticity of that set, so it cannot happen. Hence, neither snake crosses $\wh H_\alpha$. It follows that $e_\alpha$ (and thus $\alpha$) has the same color as $q_1$ and~$q_2$. 

    We deduce that every $\wh\alpha\in \cQ$ has the same color as $q$. Hence, a neighborhood $\cU$ of $q$ small enough that $\cU\setminus \DD \subseteq \cQ$ will have the desired property.
\end{proof} 

\begin{figure}[]
  \begin{center}
    \includegraphics[height=68mm]{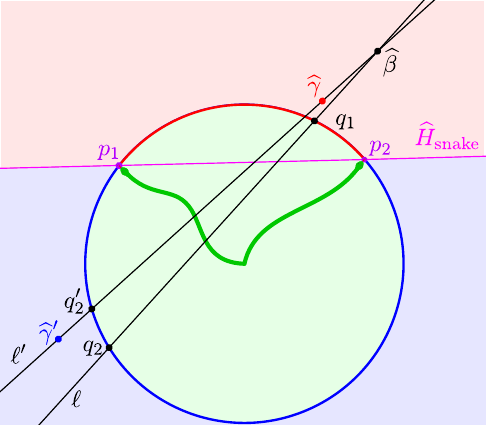}\qquad
    \includegraphics[height=68mm]{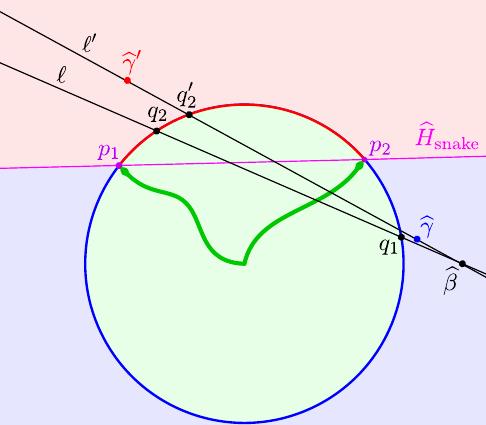}
  \end{center}
\caption{An illustration of the proof of \cref{lem:distinct_separable}.}
\label{fig:distinct}  
\end{figure}

\begin{lemma}\label{lem:distinct_separable}
  If the two snakes $\mathbf{S}_1,\mathbf{S}_2$ limit to
  distinct points $p_1,p_2\in \partial\DD$, then $R$ is weakly
  separated by $H_\snake$. 
\end{lemma}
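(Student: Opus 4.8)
The plan is to prove exactly the color/side correspondence stated just before \Cref{def:Hsnake}: every rescaled root $\wh\alpha\in\wh\Phi^+\setminus\wh H_\snake$ lies on the red side of $\wh H_\snake$ iff $\alpha$ is red. Since the two snakes already separate the red edges from the blue edges (\Cref{prop:manifold}), establishing this correspondence is precisely the assertion that $H_\snake$ is a weakly separating hyperplane for $R$, i.e.\ that $R$ is weakly separated by $H_\snake$. By the red/blue symmetry it suffices to treat a fixed red root $\alpha$ with $\wh\alpha\notin\wh H_\snake$ and show $\wh\alpha$ lies strictly on the red side.

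The geometric heart is to identify the two points where $\wh H_\alpha$ meets $\partial\DD$. Since $\wh\alpha\in\AA\setminus\DD$, \Cref{prop:affine_orthogonal_subspaces}(2) and \Cref{cor:circle_decomposition} give two points $q_1,q_2\in\partial\DD$ with $\wh H_\alpha$ the chord $q_1q_2$, with $\wh\alpha$ the pole of this chord (the meeting point of the tangents at $q_1,q_2$), and with the far region $\cH_\alpha^+\cap\DD\subseteq\conv(\wh\alpha,q_1,q_2)$. I would then observe that $q_1,q_2$ are the limit points of the two rank~$2$ parabolic subsystems $P_1,P_2$ containing $\alpha$: by \Cref{prop:rank2_parabolics} there are exactly two such subsystems, each is affine with a tangent line to $\partial\DD$ by \Cref{prop:rank2_subsystems}, and $\wh\alpha$ lies on both tangent lines, so these tangent lines are exactly the two tangents through $\wh\alpha$ and their contact points (the limit points, by \Cref{lem:limitpointsrank2}) are $q_1,q_2$. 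Let $A_\alpha\subseteq\partial\DD$ be the open arc cut off by $\wh H_\alpha$ on the far side of $O$.

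The main case split is according to whether a snake crosses the chord $\wh H_\alpha$. A snake $\mathbf S_i$ meets $\wh H_\alpha$ iff its two ``ends'' $v_\id\in\cH_\alpha^-$ and $p_i$ lie on opposite sides of the chord, i.e.\ iff $p_i\in A_\alpha$, and by \Cref{monotonic-prop} this happens at most once. In the generic case $p_1,p_2\notin A_\alpha$: then neither snake crosses $\wh H_\alpha$, and since both snakes start at $v_\id\in\cH_\alpha^-$ they contain no point of the far region $\cH_\alpha^+\cap\DD$; hence that region lies in a single component of $\DD\setminus(\mathbf S_1\cup\mathbf S_2)$, which is red because it is adjacent across $\wh H_\alpha$ to the red edge $e_\alpha$ (\Cref{prop:manifold}). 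Thus $A_\alpha$ lies in the closed red arc. I would finish this case with the elementary projective fact that the pole of a chord whose far arc is contained in the red arc lies on the red side of $\wh H_\snake$ (the pole sits radially beyond the midpoint of $A_\alpha$, which points into the red half-plane); this is a short computation with the parametrization $\wh\alpha=\sec\delta\cdot(\cos\theta_0,\sin\theta_0)$, where $\theta_0$ is the arc midpoint and $2\delta$ its width.

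The remaining case, where some $p_i\in A_\alpha$, is where I expect the real difficulty, and it is presumably the second configuration in \Cref{fig:distinct}. Here a color change occurs strictly inside $A_\alpha$, so the tangent points $q_1,q_2$ need not both be red. To pin down their colors I would use the rank~$2$ structure: because $R$ is (parabolic) biclosed, each $R\cap P_k$ is an initial or final segment of the order on $\wh{P_k}^+$ (as in the proof of \Cref{change-at-source}), whose roots accumulate at $q_k$; combined with \Cref{lem:snakerootsboundary} this forces $q_k\in\{p_1,p_2\}$ exactly when the red and blue roots of $P_k$ accumulate at $q_k$ from opposite sides, and otherwise determines the (monochromatic) color of $q_k$. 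Using that $\alpha$ is red and, by \Cref{prop:rank2_parabolics}, relatively simple in exactly one of $P_1,P_2$, I would locate each $p_i$ precisely within $A_\alpha$ and re-run the pole argument. I have checked in sample configurations that the pole still lands on the red side, so the claim should hold; the obstacle is organizing this sub-case cleanly, i.e.\ showing that the color of $\alpha$ always matches the side on which $\wh\alpha$ points and that no red root can have both $q_1$ and $q_2$ strictly in the blue arc.
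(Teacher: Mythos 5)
Your generic case is correct and nicely argued: when neither limit point lies in the far arc $A_\alpha$, neither snake crosses $\wh H_\alpha$ (by \cref{monotonic-prop} a crossing would force the limit point into the closure of $A_\alpha$), the far region is monochromatic red by \cref{prop:manifold}, and your pole computation does place $\wh\alpha$ strictly on the red side. The problem is that the case you leave open is not a fringe case but the heart of the lemma. A snake $\mathbf{S}_i$ crosses $\wh H_\beta$ precisely when $p_i\in A_\beta$, and this happens for infinitely many roots of each color: every snake segment joining the two color-change vertices of a face crosses the hyperplanes dual to all the edges between them (see the proof of \cref{change-at-source}), and for a typical biclosed set each snake consists of infinitely many such segments. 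So what you have proved is the color/side correspondence only for roots whose hyperplanes the snakes avoid, and the remaining case is left as a heuristic.

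The deeper issue is that the tools you propose for the crossing case --- parabolic biclosedness of the segments $R\cap P_k$, the colors of $q_1,q_2$, and the location of the $p_i$ --- cannot suffice even in principle, because the conclusion of the lemma is \emph{false} if one only assumes $R$ is parabolic biclosed. Concretely, fix an irrational $t$ with $2/9<t<1/3$ and let $R=\{\beta\in\Phi^+ : \text{the $y$-coordinate of } \wh\beta \text{ exceeds } t\}$. This set is separable, hence biclosed; $R$ and $B$ are both infinite; and the snakes limit to the two points of the line $\{y=t\}\cap\partial\DD$. For $\alpha=s_1\cdot\alpha_2=2\alpha_1+\alpha_2$ (so $\wh\alpha=(2/3,1/3,0)$) one checks that $R\cap\Phi_{12}=\Phi_{12}^+\setminus\{\alpha_1\}$ and $R\cap (s_1\cdot\Phi_{23})=\{\alpha\}$, and that deleting $\alpha$ from each of these again yields a set biclosed in that rank~$2$ subsystem; since no other parabolic subsystem contains $\alpha$, the set $R'=R\setminus\{\alpha\}$ is parabolic biclosed. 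Recoloring the single edge $e_\alpha$ only reroutes the snakes in a bounded region, so $R'$ has the same limit points and the same $H_\snake$ as $R$; but now $\alpha\notin R'$ while $\wh\alpha$ lies on the $R'$ side of $\wh H_\snake$, so $R'$ is not weakly separated by $H_\snake$. Since your generic-case argument applies verbatim to parabolic biclosed sets, this failure must occur exactly in the case you left open; hence any correct treatment of that case has to use closedness of $R$ and $B$ with respect to rank~$2$ \emph{linear} subsystems that are not parabolic. That is precisely what the paper's proof does, and it avoids your case split entirely: given $\wh\beta\notin\wh H_\snake$, draw a line through $\wh\beta$ meeting the open segment from $p_1$ to $p_2$, perturb it to pass through a rescaled root $\wh\gamma$ near one of its intersections with $\partial\DD$, with $\gamma$ of the boundary color there and with $\height{\gamma}>\height{\beta}$ (\cref{lem:limitpoints}, \cref{lem:snakerootsboundary}, \cref{lem:heights}); the rank~$2$ linear subsystem on the line through $\wh\beta,\wh\gamma$ supplies a second root $\gamma'$ near the other intersection, again of larger height and of the other color (\cref{lem:limitpointsrank2}). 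By \cref{lem:heights}, $\beta\notin\spange\{\gamma,\gamma'\}$, so $\gamma\in\spange\{\beta,\gamma'\}$ or $\gamma'\in\spange\{\beta,\gamma\}$, and closedness of $R$ and $B$ forces $\beta$ to have the color of $\gamma$ or of $\gamma'$ respectively --- which is also the side of $H_\snake$ on which $\wh\beta$ lies. Without an ingredient of this kind (three collinear roots plus the height bound), your proposal cannot be completed.
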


\begin{proof}
  See \Cref{fig:distinct}. Let $\beta$ be a root not on $H_{\mathrm{snake}}$. Let
  $\overline{p_1p_2}$ be the line segment connecting $p_1, p_2$, and
  let $\ell$ be any line passing through $\beta$ and intersecting
  $\overline{p_1p_2}$ at a single point in its interior. Then $\ell$ intersects
  $\partial\DD$ in two points $q_1,q_2 \not\in \{p_1,p_2\}$ having
  different colors. Let $\wh\gamma$ be a rescaled root close enough to $q_1$ that
  it has the same color as $q_1$ (using
\Cref{lem:limitpoints,lem:snakerootsboundary}) and such that the line
  $\ell'$ passing through $\wh\beta$ and $\wh\gamma$ has a second
  intersection point $q_2'$ with $\partial\DD$ of the same color as $q_2$ (by
  taking $\wh\gamma$ closer to $q_1$, we bring $q_2'$ closer to $q_2$, so
  this is possible). Also take $\wh\gamma$ to be close enough so that
  $\mathrm{height}(\gamma)>\mathrm{height}(\beta)$, using
  \Cref{lem:heights}. Because $\ell'$ contains at least two rescaled roots, there is a rank 2 linear subsystem
  $\Phi'$ of $\Phi$ such that $\wh\Phi'\subseteq\ell'$. Let $\wh\gamma'
  \in \wh\Phi'$ be close enough to $q_2'$ so that (using \Cref{lem:limitpointsrank2,lem:snakerootsboundary}) $\gamma'$ has
  the same color as $q_2'$ and so that $\mathrm{height}(\gamma') >
  \mathrm{height}(\beta)$. By \Cref{lem:heights} and the condition on the heights of
  $\gamma,\gamma'$, we know that $\beta \not\in \mathrm{span}_{\geq
    0}\{\gamma,\gamma'\}$. Hence, we must have either $\gamma \in
  \mathrm{span}_{\geq 0}\{\beta,\gamma'\}$ or $\gamma' \in
  \mathrm{span}_{\geq 0}\{\gamma,\beta\}$. 
  Since $R$ is a biclosed set, in the first case, $\beta$
  is the same color as $\gamma$, and in the second case, $\beta$ is
  the same color as $\gamma'$. In the first case, $\beta$ is on the
  same side of $H_{\mathrm{snake}}$ as $q_1$, and in the second case,
  $\beta$ is on the same side of $H_{\mathrm{snake}}$ as
  $q_2'$. Hence, in both cases, $\beta$ has the color prescribed by
  $H_{\mathrm{snake}}$.  
\end{proof}

\begin{figure}
    \centering
    \includegraphics[width=0.45\linewidth]{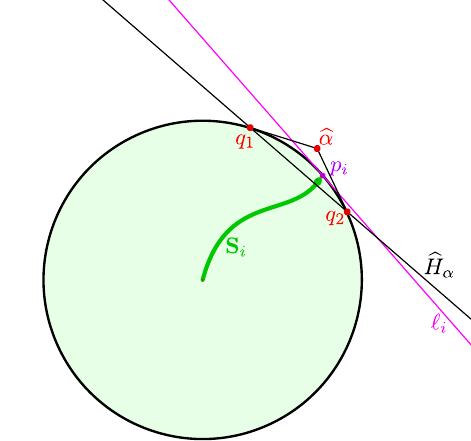}\qquad\includegraphics[width=0.45\linewidth]{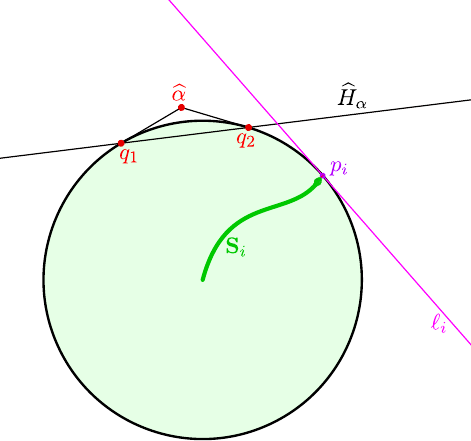}
    \caption{The two cases in the proof of \Cref{lem:rootcrossingsnake}.}
    \label{fig:rootcrossingsnake}
\end{figure}

\begin{lemma}\label{lem:rootcrossingsnake}
    Let $i\in\{1,2\}$. For a rescaled root $\wh\alpha \in \wh\Phi\setminus \ell_i$, the dual hyperplane $\wh H_\alpha$ is crossed by $\mathbf{S}_i$ if and only if $\wh\alpha$ is separated from $\DD$ by $\ell_i$. 
\end{lemma}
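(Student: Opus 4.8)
The plan is to reduce the statement to one sign computation with the bilinear form, linked by a topological crossing argument. Throughout, write $p_i$ both for the limit point of $\mathbf{S}_i$ and for the isotropic vector representing it in $\AA$ (so $Q(p_i)=0$ and the coordinates of $p_i$ sum to $1$); then $\ell_i=\wh H_{p_i}$ is the tangent to $\partial\DD$ at $p_i$ by \cref{prop:affine_orthogonal_subspaces}(1). Since every root has positive norm, $\wh\alpha$ lies outside $\DD$, so by \cref{prop:affine_orthogonal_subspaces}(2) the line $\wh H_\alpha$ is a secant meeting $\partial\DD$ in exactly the two points $q_1,q_2$ where the tangents from $\wh\alpha$ touch $\DD$; thus $\wh H_\alpha\cap\partial\DD=\{q_1,q_2\}$. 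The hypothesis $\wh\alpha\notin\ell_i$ is equivalent to $p_i\notin\{q_1,q_2\}$, so $p_i$ lies strictly on one (open) side of $\wh H_\alpha$. Because $\wh H_\alpha=\wh H_{-\alpha}$ and rescaling identifies a root with its negative, I may treat $\alpha$ as a positive multiple of $\wh\alpha$ in all sign computations.

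The first equivalence I would establish is: $\mathbf{S}_i$ crosses $\wh H_\alpha$ if and only if $p_i\in\cH_\alpha^+$. By \cref{cor:circle_decomposition}, the center $O=v_\id$ lies in $\cH_\alpha^-$ (and the small cap of $\DD$ near $\wh\alpha$ lies in $\cH_\alpha^+$). The snake $\mathbf{S}_i$ is a curve starting at $O$ and limiting to $p_i$, and by \cref{monotonic-prop} we may take it within its isotopy class to meet $\wh H_\alpha$ at most once, crossing transversally when it does. A connectedness argument then finishes this step: if the snake never meets $\wh H_\alpha$, then the whole snake, and hence its limit $p_i$, stays in $\overline{\cH_\alpha^-}$, forcing $p_i\in\cH_\alpha^-$ (as $p_i\notin\wh H_\alpha$); if it crosses once, then past the crossing it remains in $\cH_\alpha^+$, so $p_i\in\cH_\alpha^+$. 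Conversely these two cases are distinguished exactly by which side of $\wh H_\alpha$ contains $p_i$.

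The second equivalence translates $p_i\in\cH_\alpha^+$ into the separation condition using symmetry of $\langle-,-\rangle$. We have $p_i\in\cH_\alpha^+$ iff $\langle\alpha,p_i\rangle>0$, which by symmetry equals $\langle p_i,\alpha\rangle>0$, i.e. $\wh\alpha\in\cH_{p_i}^+$ (since $\wh\alpha$ is a positive multiple of $\alpha$). A one-line computation gives $\langle p_i,O\rangle=-\tfrac13<0$, so $O\in\cH_{p_i}^-$, and therefore the connected set $\DD$, which meets $\ell_i=\wh H_{p_i}$ only tangentially at $p_i$, lies on the closed $\cH_{p_i}^-$ side of $\ell_i$. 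Hence $\wh\alpha\in\cH_{p_i}^+$ says precisely that $\wh\alpha$ is separated from $\DD$ by $\ell_i$. Combining the two equivalences yields the lemma.

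The step I expect to be the main obstacle is the crossing argument of the second paragraph: one must be sure that the monotonicity of \cref{monotonic-prop} genuinely forces transversal behavior, so that "$\mathbf{S}_i$ meets $\wh H_\alpha$" coincides with "$O$ and $p_i$ lie on opposite sides of $\wh H_\alpha$," rather than a tangential touch that would break the equivalence. Once that is pinned down, the remaining two ingredients — the placement of $O$ from \cref{cor:circle_decomposition} and the sign identity from symmetry of the form — are direct and require no further geometry.
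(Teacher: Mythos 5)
Your proof is correct, and its topological core coincides with the paper's: both arguments reduce the lemma to deciding which side of $\wh H_\alpha$ contains the limit point $p_i$, using that the snake starts at $O\in\cH_\alpha^-$ (\cref{cor:circle_decomposition}) and, by \cref{monotonic-prop}, meets $\wh H_\alpha$ at most once. Where you genuinely diverge is in the duality step. The paper works with the tangent triangle $\conv(\wh\alpha,q_1,q_2)$: its only side meeting $\DD$ is the chord on $\wh H_\alpha$, and the paper simply reads off from the picture that this triangle contains $p_i$ exactly when $\ell_i$ separates $\wh\alpha$ from $\DD$ (and never contains $O$), so the snake crosses the chord in one case and cannot cross it in the other. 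You instead prove that same equivalence algebraically: $p_i\in\cH_\alpha^+$ iff $\wh\alpha\in\cH_{p_i}^+$ by symmetry of $\langle-,-\rangle$, and $\cH_{p_i}^+$ is the side of $\ell_i$ away from $\DD$ because $\langle p_i,O\rangle=-\tfrac13<0$ (your computation checks out) and $\ell_i$ meets $\DD$ only at $p_i$ by \cref{prop:affine_orthogonal_subspaces}. What your route buys is rigor at precisely the point the paper treats as visually evident --- the assertion ``the triangle contains $p_i$'' is exactly the polarity statement you verify by a sign computation; what the paper's route buys is brevity and a picture that makes both directions transparent at once. Finally, the transversality issue you flag is not a defect of your argument relative to the paper's: the paper's second case (``the snake must not pass through $\wh H_\alpha$'') relies on the same at-most-once property of \cref{monotonic-prop} and the same implicit assumption that a single meeting is an honest crossing, so you are no worse off, and arguably more honest, on that point.
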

\begin{proof}
Let $\wh{\alpha}$ be a rescaled root. Consider the lines containing $\wh{\alpha}$ that are tangent to $\partial\DD$; let $q_1,q_2$ be the points at which these lines are tangent to $\DD$.
    
    If $\wh\alpha$ is separated from $\DD$ by $\ell_i$, then the triangle $\conv(\wh \alpha, q_1,q_2)$ contains $p_i$ 
    and does not contain the center of $\DD$, so the snake $\mathbf{S}_i$ must cross $\wh H_\alpha$ (the third side of the triangle). (See the left side of \cref{fig:rootcrossingsnake}.)

    If instead $\wh\alpha$ is on the same side of $\ell_i$ as $\DD$, then the triangle $\conv(\wh\alpha,q_1,q_2)$ does not contain $p_i$ or the center of $\DD$. Hence, $\mathbf{S}_i$ must not pass through $\wh H_\alpha$. (See the right side of \cref{fig:rootcrossingsnake}.)
\end{proof}

\begin{lemma}\label{same-endpoint}
    If the two snakes $\mathbf{S}_1,\mathbf{S}_2$ limit to the same point $p\in \partial \DD$, then $R$ is weakly separated by $H_\snake$. 
\end{lemma}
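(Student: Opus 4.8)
The plan is to reduce the claim to a statement about which side of the tangent line $\wh H_\snake$ each rescaled root lies on, and then to read off that side from whether the snakes cross the dual hyperplane, using \cref{lem:rootcrossingsnake}. First I would record the basic geometry of the degenerate case $p_1=p_2=p$. By \cref{prop:affine_orthogonal_subspaces}(1) the tangent line to $\partial\DD$ at $p$ is exactly $\wh H_p$, so $\wh H_\snake=\wh H_p$, and symmetry of the bilinear form gives $\wh\beta\in\wh H_\snake$ if and only if $p\in\wh H_\beta$. The roots with $\wh\beta\in\wh H_\snake$ are precisely those whose dual hyperplane passes through $p$; these lie on $H_\snake$ and so are unconstrained by weak separation. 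Thus it suffices to prove: for every root $\beta$ with $\wh\beta\notin\wh H_\snake$, the root $\beta$ is red exactly when $\wh\beta$ lies on the far side of $\wh H_\snake$ from $\DD$ (equivalently, by \cref{lem:rootcrossingsnake} with $\ell_1=\ell_2=\wh H_\snake$, exactly when $\wh H_\beta$ is crossed by the snakes).

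Next I would pin down the coloring of the boundary circle. Each snake $\mathbf S_i$ lies in $\DD^{\circ}$ away from its limit point, so the closure of $\mathbf S_1\cup\mathbf S_2$ meets $\partial\DD$ only at $p$. Hence the connected arc $\partial\DD\setminus\{p\}$ is disjoint from the pair of snakes and therefore lies entirely in one component of $\DD\setminus(\mathbf S_1\cup\mathbf S_2)$; that is, $\partial\DD\setminus\{p\}$ is monochromatic, and I take it to be blue without loss of generality (swapping the roles of $R$ and $B$ otherwise).

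With the boundary all blue, I would run the crossing argument. Fix $\beta$ with $\wh\beta\notin\wh H_\snake$, so the chord $\wh H_\beta$ meets $\partial\DD$ at two points $q_1,q_2\neq p$, both blue. Since each hyperplane crosses a unique edge of $\Gamma$, the chord $\wh H_\beta$ meets the tree only at $e_\beta$, so removing $e_\beta$ splits it into two segments $\gamma_1,\gamma_2$, each lying in a single face of $\Gamma$ (one of the two faces incident to $e_\beta$) and running out to one of the blue ideal points $q_1,q_2$. By \cref{monotonic-prop} each snake crosses $\wh H_\beta$ at most once, hence crosses each $\gamma_k$ at most once; a single crossing would put the two endpoints of $\gamma_k$ in different colored regions. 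If $\beta$ is blue, then $e_\beta$, $q_1$, and $q_2$ are all blue, so both endpoints of each $\gamma_k$ are blue, forcing zero crossings; by \cref{lem:rootcrossingsnake}, $\wh\beta$ lies on the same side of $\wh H_\snake$ as $\DD$. If $\beta$ is red, the chord runs blue$\to$red$\to$blue, so it must cross the snakes, and \cref{lem:rootcrossingsnake} places $\wh\beta$ on the far side of $\wh H_\snake$ from $\DD$. These two conclusions put the red and blue roots on opposite sides of $\wh H_\snake$, so $H_\snake$ weakly separates $R$.

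I expect the main obstacle to be the crossing count in the last paragraph: one must argue cleanly that $\wh H_\beta$ decomposes into two segments living in single faces and that the monotonicity of \cref{monotonic-prop} bounds the number of snake crossings of each segment, so that the colors of the endpoints $q_1,q_2,e_\beta$ determine the parity of crossings. The conceptual subtlety is matching the region-based notion of ``blue'' at an ideal point $q_k$ with the edge coloring, and confirming that ``crossing a snake'' is exactly ``changing color''; once this is set up, \cref{lem:rootcrossingsnake} converts the crossing data into the desired side of $\wh H_\snake$ and the separation follows immediately.
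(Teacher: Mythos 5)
Your overall strategy is sound and genuinely different from the paper's: the paper proves this lemma by invoking the structural classification of \cref{same-point-description} (when the two snakes share a limit point, the enclosed red edges form either an infinite path with exactly one edge per depth, or a finite path together with part of the boundary of a single face) and then checks separation directly in each of those two cases, whereas you run a uniform parity argument against the boundary coloring. Your reduction is set up correctly: $\wh H_\snake=\wh H_p$, the roots lying on $H_\snake$ are exactly those whose dual hyperplanes pass through $p$, the arc $\partial\DD\setminus\{p\}$ is monochromatic (say blue), and each chord $\wh H_\beta$ with $\wh\beta\notin\wh H_\snake$ splits at $e_\beta$ into two segments $\gamma_1,\gamma_2$, each lying in one of the two faces incident to $e_\beta$ and ending at a blue ideal point.

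The gap is exactly where you predicted it: the step ``forcing zero crossings.'' From \cref{monotonic-prop} you get that each snake crosses $\wh H_\beta$ at most once, hence that the \emph{pair} of snakes crosses each segment $\gamma_k$ at most \emph{twice}. Parity of endpoint colors then gives $0$ or $2$ crossings of $\gamma_k$ when $\beta$ is blue, and the configuration ``both snakes cross the same segment $\gamma_1$'' (two crossings on $\gamma_1$, none on $\gamma_2$) is consistent with everything you have stated; in that configuration \cref{lem:rootcrossingsnake} would place $\wh\beta$ on the far side of $\wh H_\snake$ even though $\beta$ is blue, and your conclusion would fail. To close this you need one more structural fact from the construction of the snakes: the snake segments $\cS_f$ are indexed by faces, at most one per face, and are partitioned between $\mathbf{S}_1$ and $\mathbf{S}_2$ (according to $\omega(f)$); hence the intersection of the pair of snakes with the open face containing $\gamma_k$ is at most one arc, belonging to a single snake, and by \cref{monotonic-prop} that arc crosses $\gamma_k\subset\wh H_\beta$ at most once. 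With the sharpened bound of one crossing per segment, parity does the rest: zero crossings per segment when $\beta$ is blue (so neither snake crosses $\wh H_\beta$ and $\wh\beta$ is on the same side as $\DD$), and exactly one crossing per segment when $\beta$ is red (so some snake crosses $\wh H_\beta$ and $\wh\beta$ is separated from $\DD$), after which \cref{lem:rootcrossingsnake} finishes the proof. So your argument is repairable with a short appeal to the construction, but as written the key crossing count is not justified.
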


\begin{proof}
    Without loss of generality, we may assume $\DD$ is contained in the blue side of $\wh H_\snake$. Equivalently, the red edges are those encircled by $\mathbf{S}_1\cup \mathbf{S}_2$. First observe that, by \Cref{lem:rootcrossingsnake}, a rescaled root $\wh \alpha \in \wh\Phi\setminus \wh H_\snake$ is on the red side of $\wh H_\snake$ if and only if both snakes $\mathbf{S}_1,\mathbf{S}_2$ cross $\wh H_\alpha$. By \Cref{same-point-description}, there are two cases. 
    
    In the first case, there is exactly one red edge of each depth. In this case, a hyperplane $H_\beta$ is crossed by both snakes if and only if the corresponding root $\beta$ is red (see \Cref{fig:5.6}). Hence, every rescaled root on the red side of $\wh H_\snake$ is red, and every rescaled root on the blue side of $\wh H_\snake$ is blue.
    
    In the second case, there is some $w\in U_3$ such that $H_\snake$ is spanned by the roots $\alpha,\beta$, where $e_\alpha,e_\beta$ are the outgoing edges from $v_w$. Let $f$ be the corresponding face. Then the red edges are those on the path from $v_\id$ to $v_w$, together with a subset of the edges on the boundary of $f$. Hence, the edges whose corresponding hyperplanes are crossed by a snake (and which are not edges of $f$) are exactly those on the path from $v_\id$ to $v_w$. A rescaled root $\wh \gamma$ is in $\wh H_\snake$ if and only if $e_\gamma$ is an edge on the boundary of $f$ by \cref{new-parabolic-prop}. The hyperplanes corresponding to the rescaled roots on the red side of $\wh H_\snake$ are crossed by the two snakes and are therefore on the path from $v_\id$ to $v_w$; in particular, they are red. Hyperplanes corresponding to rescaled roots on the blue side of $\wh H_\snake$ are crossed by neither snake, so they are blue. 
\end{proof}

\begin{figure}
    \centering
    \includegraphics[width=0.7\linewidth]{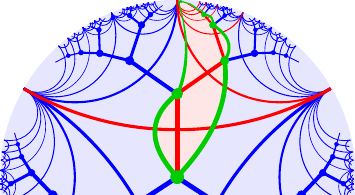}
    \caption{An illustration of the proof of \Cref{same-endpoint}.}
    \label{fig:5.6}
\end{figure}

\begin{proof}[Proof of \cref{thm:main}]
    Let $R$ be a biclosed set of positive roots, and set $B=\Phi^+\setminus R$.
    If $R$ or $B$ is empty, then $\AA_0$ is a separating hyperplane.
    Otherwise, if $R$ or $B$ is finite, then $R$ or $B$ is the inversion set of some $w\in U_3$ by \cref{prop:finite_snake}, which implies $R$ and $B$ are separated by $w\cdot \AA_0$.
    Finally, if $R$ and $B$ are both infinite, then the hyperplane $H_{\snake}$ from \cref{def:Hsnake} weakly separates $R$ and $B$ by \cref{lem:distinct_separable} and \cref{same-endpoint}.
\end{proof}

\section{When is a parabolic biclosed set also biclosed?}\label{sec:When?}
\begin{figure}
    \centering
    \includegraphics[width=0.45\linewidth]{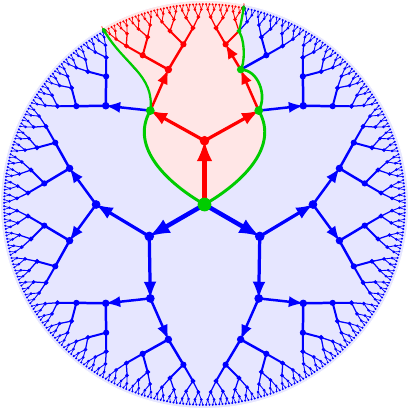}\qquad\includegraphics[width=0.45\linewidth]{SnakesPIC23}
    \caption{Two parabolic biclosed sets whose snakes have the same limit points. Only the set on the right is biclosed.}
    \label{fig:parabolicnotbiclosed}
\end{figure}
In \Cref{sec:parabolic}, we showed that any partition $\Phi^+ = R\sqcup B$ into two parabolic biclosed sets has an associated pair of snakes $\mathbf{S}_1,\mathbf{S}_2$. If $R$ and $B$ are both infinite, then the snakes have limit points $p_1,p_2\in \partial\DD$. In \Cref{sec:biclosed}, we showed that if $R$ and $B$ are biclosed, which is stronger than being just parabolic biclosed, then $R$ and $B$ are weakly separated by a hyperplane through $p_1$ and $p_2$. It is possible for a parabolic biclosed set not to be biclosed at all, much less weakly separable; \Cref{fig:parabolicnotbiclosed} gives such an example. In this section, we will show how to determine, in terms of snakes, whether a parabolic biclosed set is biclosed. 

Let $\Phi^+ = R \sqcup B$ be a partition of the positive roots into parabolic biclosed sets. As usual, we call elements of $R$ red and elements of $B$ blue. If $R$ or $B$ is finite, then both $R$ and $B$ are biclosed by \Cref{prop:finite_snake}. If $R$ and $B$ are both infinite, then the snakes $\mathbf{S}_1,\mathbf{S}_2$ limit to points $p_1,p_2\in \partial\DD$. It turns out the only roots that can fail to be weakly separated by the line through $p_1$ and $p_2$ are those whose reflecting hyperplanes are crossed by a snake. To make this precise, we introduce the following notion. Let $E$ be the set of undirected edges of the Cayley graph $\Gamma$, and let $F$ be the set of faces of $\Gamma$.

\begin{figure}
    \centering
    \includegraphics[width=0.51\linewidth]{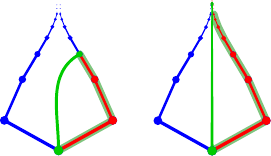}
    \caption{On the left is a face whose boundary has two color changes, with its associated snake segment. The edges between the two snake segment endpoints are in the jagged snake, as indicated by green shading. On the right is a face $f$ whose boundary has one color change, with its associated snake segment; 
    we may arbitrarily pick one of the two halves to be in the jagged snake.} 
    \label{fig:jagged_segments} 
\end{figure}

\begin{definition}
    Let $\mathbf{S}_i$ be one of the two snakes for $R\sqcup B$. The \dfn{jagged snake} associated to $\mathbf{S}_i$ is the set $\bm{\Sigma}_i$ of edges of the Cayley graph defined by the rules in \Cref{fig:jagged_segments}. 
\end{definition}

\begin{figure}
    \centering
    \includegraphics[width=0.45\linewidth]{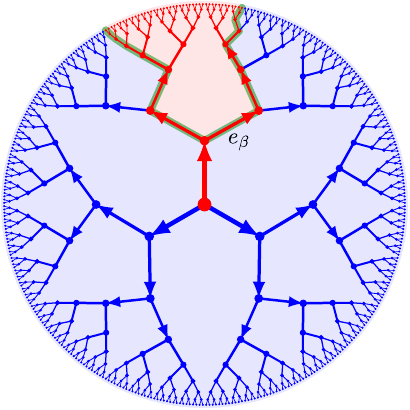}\qquad\includegraphics[width=0.45\linewidth]{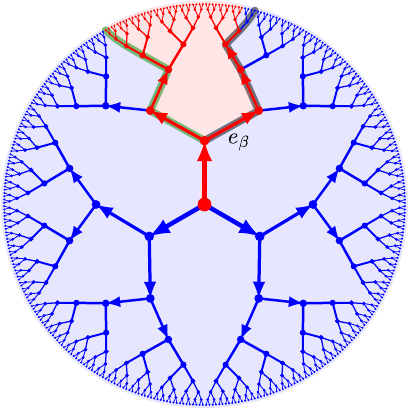}
    \caption{On the left, the jagged pair $\bm\Sigma$ for the parabolic biclosed sets in \Cref{fig:parabolicnotbiclosed}, with an edge $e_\beta$ indicated. On the right, we show $\bm\Sigma_{e_\beta}$ in green and $t_\beta\cdot \bm\Sigma_{e_\beta}$ in black. Since $t_\beta\cdot \bm\Sigma_{e_\beta}$ diverges from $\bm\Sigma$ into the area shaded blue, \Cref{thm:jaggedtest} says that $e_\beta$ must be blue in any biclosed set having $\bm\Sigma$ as a jagged pair. In particular, the set depicted on the left of \Cref{fig:parabolicnotbiclosed} is not biclosed.}
    \label{fig:jaggedexample}
\end{figure}

The jagged pairs for the two parabolic biclosed sets in \Cref{fig:parabolicnotbiclosed} are the same and are shown in \Cref{fig:jaggedexample}. 
Let $\bm\Sigma$ be a jagged pair for $R\sqcup B$. 

\begin{lemma}\label{lem:jaggedempty}
    If $\bm\Sigma$ is empty, then $R$ and $B$ are biclosed.
\end{lemma}
\begin{proof}
    The following are the possible situations when $\bm\Sigma=\varnothing$:
    \begin{itemize}
        \item $R$ or $B$ is finite;
        \item $R$ and $B$ are both infinite, and there is one edge of each depth between $\mathbf{S}_1$ and $\mathbf{S}_2$.
    \end{itemize}
    In the first case, \Cref{prop:finite_snake} implies that $R$ or $B$ is an inversion set, in which case it is biclosed. In the second case, without loss of generality, $R$ contains one root of each depth. Then the subset of $R$ consisting of roots with depth at most $d$ is an inversion set, so $R$ is the union of an increasing family of biclosed sets. Such unions are biclosed. 
\end{proof}

For an element $u \in U_3$ and an edge $e\in E$ with endpoints $w,ws_i$, we define $u\cdot e$ to be the edge with endpoints $uw,uws_i$. In other words, if $\beta$ and $\pm u\beta$ are positive roots, then we set $u\cdot e_{\beta} = e_{\pm u\beta}$. 
This action extends to an action on subsets of $E$, so in particular $u\cdot \bm\Sigma$ is well-defined.

\begin{lemma}\label{lem:jaggedonline}
    Assume $R$ and $B$ are both infinite. Let $\wh H_{\snake}$ be the line through $p_1$ and $p_2$ (or the tangent line at $p_1$ if $p_1=p_2$). If $\bm\Sigma\neq \varnothing$, then a rescaled root $\wh \alpha$ is in $\wh H_{\snake}$ whenever $t_\alpha\cdot \bm\Sigma = \bm\Sigma$. If $\bm\Sigma\neq\varnothing$ and $\wh H_\snake \cap \wh\Phi$ contains at least two rescaled roots, then $\wh \alpha$ is in $\wh H_{\snake}$ if and only if $t_\alpha\cdot \bm\Sigma = \bm\Sigma$.
\end{lemma}
\begin{proof}
    Let $\alpha$ be a positive root. Observe that $\{p_1,p_2\} = \{t_\alpha p_1,t_\alpha p_2\}$ if and only if $t_\alpha H_\snake = H_\snake$ if and only if either $\alpha\in H_\snake$ or $H_\snake = H_\alpha$. (The last equivalence is because subspaces preserved by $t_\alpha$ are sums of eigenspaces for $t_\alpha$.) If $H_\snake = H_\alpha$ and $\bm\Sigma \neq \varnothing$, then $p_1$ is the limit point of a face $f$ of the Cayley graph containing $e_\alpha$, and $\bm\Sigma$ contains some but not all edges of $f$. Hence in this case, $t_\alpha \bm\Sigma\neq \bm\Sigma$. So if $t_\alpha\bm\Sigma = \bm\Sigma$, then $\alpha\in H_\snake$. Conversely, assume that $\bm\Sigma\neq \varnothing$ and that $\alpha$ and at least one other root are in $H_\snake$. Then $t_\alpha$ preserves $\{p_1,p_2\}$ and $e_\alpha \in \bm\Sigma$. If $p_1=p_2$, then $\bm\Sigma$ must be the set of edges of the face of the Cayley graph with limit point $p_1$. This is the set of edges dual to the roots in $H_\snake$, so $t_\alpha \bm\Sigma = \bm\Sigma$. If instead $p_1\neq p_2$, then the fact that there are two roots in $H_\snake$ implies that neither $p_1$ nor $p_2$ is the limit point of a face of the Cayley graph. Then for each vertex $w\in \Gamma$, there is a unique path in the Cayley graph with endpoint $w$ and limit point $p_1$ (respectively, $p_2$). If $e_\alpha$ has endpoints $w,ws_i$, then $t_\alpha$ exchanges the path from $ws_i$ to $p_1$ with the path from $w$ to $p_2$. Since $\bm\Sigma$ is the union of those two paths and $\{e_\alpha\}$, it follows that $t_\alpha\bm\Sigma =\bm\Sigma$, as desired.
\end{proof}

We are now ready to characterize the parabolic biclosed sets that are biclosed. If $e\in \bm\Sigma$, then $e$ is in exactly one of the two jagged snakes, say $e\in \bm\Sigma_1$. In this case, we write $\bm\Sigma_e$ to mean the symmetric difference of $\bm\Sigma_2$ with the subset of $\bm\Sigma_1$ consisting of edges with depth at most the depth of $e$; this is the portion of $\bm\Sigma$ reachable from $e$ by going down (and then later going back up). 

\begin{proposition}\label{thm:jaggedtest}
    Let $\Phi^+ = R\sqcup B$ be a partition of the positive roots into two parabolic biclosed sets, with jagged pair $\bm\Sigma$. Then $R$ and $B$ are biclosed if and only if the following both hold:
    \begin{itemize}
        \item[(a)] For each edge $e_\beta\in \bm\Sigma$ such that $t_\beta \cdot \bm\Sigma\neq \bm\Sigma$, let $e'$ be any edge in $(t_\beta \cdot \bm\Sigma_e) \setminus \bm\Sigma$. Then $e_\beta$ must have the same color as $e'$.
        
        \item[(b)] If there are distinct edges $e_\alpha,e_\beta\in\bm\Sigma$ such that $t_\alpha\cdot \bm\Sigma = t_\beta\cdot \bm\Sigma = \bm\Sigma$, then the restriction of $R$ and $B$ to $\Phi^+\cap \Span\{\alpha,\beta\}$ must be biclosed in $\Span\{\alpha,\beta\}$.
    \end{itemize}
\end{proposition}
\begin{proof}
    By \Cref{lem:jaggedempty}, we may assume $R$ and $B$ are both infinite and that $\bm\Sigma \neq \varnothing$. Let the limit points of $\mathbf{S}_1,\mathbf{S}_2$ be $p_1$ and $p_2$, and $\wh H_\snake$ be the line passing through $p_1$ and $p_2$. By \Cref{lem:jaggedonline}, condition (b) is equivalent to $R\cap H_\snake$ being biclosed in $\Phi^+\cap H_\snake$. Hence it is enough to show that condition (a) is equivalent to $H_\snake$ being a weakly separating hyperplane for $R$. 

    We first treat the rare situation where both $p_1$ and $p_2$ are limit points of faces $f_1,f_2$ of the Cayley graph such that $p_1\neq p_2$ and there exists a (necessarily unique) root $\alpha$ such that $t_\alpha p_1 = p_2$. In this case, we might have a jagged pair $\bm\Sigma$ such that $e_\alpha\in \bm\Sigma$ and $t_\alpha\cdot \bm\Sigma \neq \bm\Sigma$. Assume this holds. Note that condition (a) then imposes a restriction on $\alpha$, despite the fact that $\alpha\in H_\snake$. However, it is easy to verify that $H_\snake$ does not weakly separate $R$ in this situation, since $t_\alpha$ preserves the two halves of $V\setminus H_\snake$. As a result, condition (a) must fail for some edge $e_\beta$ with $\beta\neq \alpha$, as we shall see below. Since this is the only situation where condition (a) might be imposed on a root in $H_\snake$, it will be enough to consider only roots not in $H_\snake$ when showing that biclosed sets satisfy (a).
    
    If this rare situation does not hold, or if it does hold and $t_\alpha\cdot\bm\Sigma=\bm\Sigma$, then \Cref{lem:jaggedonline} implies that $\beta\in H_\snake$ if and only if $e_\beta\in \bm\Sigma$ and $t_\beta\bm\Sigma\neq \bm\Sigma$. So let $\wh \beta$ be a rescaled root that is not on $\wh H_\snake$. Using the language from \Cref{sec:biclosed}, without loss of generality, $\wh \beta$ is on the red side of $\wh H_\snake$. We will show that condition (a) holds for $\beta$ if and only if $\beta$ is red.  Let $\ell_1$ and $\ell_2$ be the tangent lines to $\partial\DD$ at $p_1$ and $p_2$, respectively. 
    
    If $\wh H_\beta$ is crossed by both $\mathbf{S}_1$ and $\mathbf{S}_2$, then $e_\beta$ is in both $\bm\Sigma_1$ and $\bm\Sigma_2$, so $e_\beta\not\in \bm\Sigma$ and (a) holds vacuously. Furthermore, $\wh\beta$ is separated from $\DD$ by both $\ell_1$ and $\ell_2$, by \Cref{lem:rootcrossingsnake}. Let $q_1$ and $q_2$ be the points of tangency for the two tangent lines to $\partial\DD$ that pass through $\wh \beta$. Then both snakes pass through the line segment $\overline{q_1q_2}\subseteq \wh H_\beta$, so $p_1$ and $p_2$ are both in the triangle $\conv\{\wh \beta,q_1,q_2\}$. Hence the red points of $\partial\DD$ are contained in $\conv\{\wh\beta,q_1,q_2\}$. Since $e_\beta$ is in the same component of $\conv\{\wh\beta,q_1,q_2\}\setminus (\mathbf{S}_1\cup\mathbf{S}_2)$ as the red points of $\partial\DD$, $\beta$ is red.

    If $\wh H_\beta$ is crossed by neither snake, then $e_\beta$ is not in $\bm\Sigma$ and (a) holds vacuously. Then, similar to the previous case, we construct the triangle $\conv\{\wh\beta,q_1,q_2\}$ and deduce that all points of $\partial\DD$ in the triangle are red. (This case is illustrated in \Cref{fig:regionoffsnakes}.) Since $e_\beta$ itself has an endpoint in $\conv\{\wh\beta,q_1,q_2\}$ and the snakes do not enter the triangle, $\beta$ is also red.

    The remaining case is when $\wh H_\beta$ is crossed by a single snake, say $\mathbf{S}_1$, so that $e_\beta \in \bm\Sigma_1\setminus\bm\Sigma_2$. Then $e_\beta\in\bm\Sigma$. Since $\wh \beta$ is not in $\wh H_\snake$, we must have $t_\beta\cdot \bm\Sigma \neq \bm\Sigma$ by \Cref{lem:jaggedonline}. 
    We will now show that (a) holds for $\beta$ if and only if $\beta$ is red. The key observation is that every edge in the path $t_\beta\cdot \bm\Sigma_{e_\beta}\setminus \bm\Sigma$ has the same color as the limit point of that path, which is $t_\beta p_2$, and that furthermore $t_\beta p_2$ is on the red side of $\wh H_\snake$ (see \Cref{fig:tp2}). It follows that $\beta$ is red if and only if it is the same color as the edges of $t_\beta\cdot\bm\Sigma_{e_\beta}\setminus \bm\Sigma$, which is condition (a).
\end{proof}

\begin{figure}
\begin{center}
    \includegraphics[height=70mm]{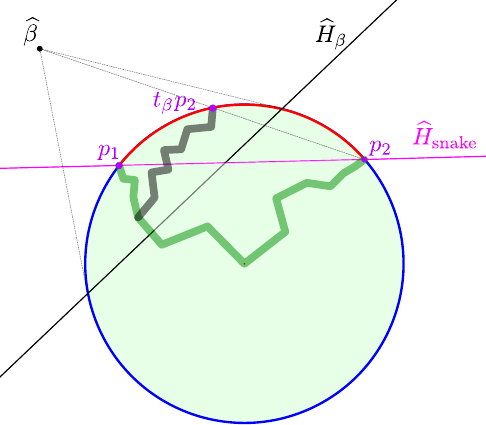}
  \end{center}
    \caption{Illustration of the proof of \Cref{thm:jaggedtest}. }
    \label{fig:tp2}
\end{figure}

\section{Parametrization of Extended Weak Order}\label{sec:structure}

\subsection{Parameterizing Biclosed Sets} 

Let $R$ be a biclosed set. 
By \Cref{thm:main}, $R$ is weakly separable. 
If $R$ is finite (respectively, cofinite), then there is a unique $w\in U_3$ such that $R=\Inv(w)$ (respectively, $R=\Phi^+\setminus\Inv(w)$). 
In either case, there are many weakly separating hyperplanes for $R$.
Among them there are distinguished hyperplanes specified by the following lemma.

\begin{lemma}\label{lem:finite_weak_hyp}
    Suppose $R=\Inv(w)$ for some $w\in U_3\setminus\{\id\}$. 
    Let $\beta_1,\beta_2$ be the roots labeling edges incident to $v_w$ not enclosed by the pair of snakes, and let $\alpha$ be the root labeling the remaining edge incident to $v_w$. Then $H=\Span\{\beta_1,\beta_2\}$, $H_1=\Span\{\beta_1,\alpha\}$, and $H_2=\Span\{\beta_2,\alpha\}$ are weakly separating hyperplanes for $R$.  
    Moreover, $H,H_1,H_2$ are exactly the weakly separating hyperplanes for $R$ that are tangent to $\partial\DD$. 
\end{lemma}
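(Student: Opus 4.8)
The plan is to prove both assertions simultaneously by characterizing \emph{all} tangent weakly separating hyperplanes for $R=\Inv(w)$ and then observing that exactly three of them occur, namely $H,H_1,H_2$. The organizing principle is a duality exchanging the position of a rescaled root relative to a tangent line of $\partial\DD$ with the position of its dual hyperplane relative to the point of tangency. First I would fix the indexing: since the snakes meet at $v_w$, \cref{finite-corollary} identifies the enclosed edges with the geodesic $v_\id\to v_w$, so $e_\alpha$ is the incoming edge at $v_w$ while $e_{\beta_1},e_{\beta_2}$ are outgoing; thus $\alpha\in R$ and $\beta_1,\beta_2\in B$. The three faces incident to $v_w$ are $f_0,f_1,f_2$, bounded respectively by $\{e_{\beta_1},e_{\beta_2}\}$, $\{e_\alpha,e_{\beta_1}\}$, $\{e_\alpha,e_{\beta_2}\}$; by \cref{new-parabolic-prop} each $f_\bullet$ corresponds to a rank $2$ parabolic subsystem $\Phi_{f_\bullet}$ with $H=\Span(\Phi_{f_0})$, $H_1=\Span(\Phi_{f_1})$, $H_2=\Span(\Phi_{f_2})$. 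By \cref{prop:rank2_subsystems} these subsystems are affine, so each plane $\Span(\Phi_{f_\bullet})$ is tangent to the isotropic cone; equivalently $\wh H_{f_\bullet}$ is tangent to $\partial\DD$ at the ideal point $p_\bullet$ of $f_\bullet$. More generally, the tangent hyperplanes meeting $\partial\DD$ at a Farey ideal point are exactly the planes $\Span(\Phi_{f'})$ over faces $f'$, and $\Span(\Phi_{f'})\cap\Phi=\Phi_{f'}$ consists precisely of the roots $\gamma$ with $\langle\gamma,p'\rangle=0$.

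The technical heart is the duality itself. For a positive root $\gamma$ and any ideal point $p'\in\partial\DD$, the sign computations $\langle\gamma,O\rangle=-\tfrac13\,\height{\gamma}<0$ and $\langle p',O\rangle<0$ show that $\DD$ lies on the side $\langle p',\cdot\rangle<0$ of the tangent line at $p'$, and that $O$ lies on the side $\langle\gamma,\cdot\rangle<0$ of $\wh H_\gamma$. Hence $\wh\gamma$ lies strictly on the far side of the tangent line at $p'$ if and only if $\langle\gamma,p'\rangle>0$, which by symmetry of the form and \cref{rem:geodesics} holds if and only if the geodesic $\wh H_\gamma$ separates $O$ from $p'$. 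Writing $I(p')=\{\gamma\in\Phi^+:\langle\gamma,p'\rangle>0\}$, I would then argue that since $\DD$ lies entirely on the near side, an infinite cofinite set such as $\wh B$ (which accumulates everywhere on $\partial\DD$ by \cref{lem:limitpoints}) must occupy the near side while $R$ occupies the far side; therefore the tangent line at $p'$ weakly separates $R$ exactly when $I(p')\subseteq R$ and $\Phi^+\setminus\bigl(I(p')\cup\{\gamma:\langle\gamma,p'\rangle=0\}\bigr)\subseteq B$.

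Next I would compute $I(p')$. For a non-Farey point $p'$ the ray $O\to p'$ crosses infinitely many Farey hyperplanes, so $I(p')$ is infinite; since no root lies on the tangent line in this case, separation would force $R=I(p')$, impossible as $R$ is finite. This eliminates non-Farey tangency. For a Farey point $p'$ with face $f'$ and minimal coset representative $w_{f'}=\yy(f')$, taking $v_u\to p'$ along $u\in w_{f'}U_{s_i,s_j}$ and using $\Inv(w_{f'}h)=\Inv(w_{f'})\sqcup w_{f'}\Inv(h)$ with $w_{f'}\Inv(h)\subseteq\Phi_{f'}^+$ yields $I(p')=\Inv(w_{f'})$. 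The separation condition then becomes the sandwich $\Inv(w_{f'})\subseteq\Inv(w)\subseteq\Inv(w_{f'})\cup\Phi_{f'}^+$. The final step is to show this holds if and only if $v_w\in f'$: writing $w=w_{f'}v$ reduced, the condition forces $\Inv(v)\subseteq\Phi_{ij}^+$, whence $v\in U_{s_i,s_j}$ and $v_w\in f'$, while the converse is the same inversion decomposition. As the faces containing $v_w$ are exactly $f_0,f_1,f_2$, this proves both that $H,H_1,H_2$ are weakly separating and that they are the only tangent weakly separating hyperplanes.

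The hard part will be the second paragraph: pinning down the correct sides in the duality (the two sign computations and the symmetry step) and justifying the limiting identity $I(p')=\Inv(w_{f'})$ as $v_u\to p'$ from both ends of the boundary path of $f'$. A secondary subtlety is the implication $\Inv(v)\subseteq\Phi_{ij}^+\Rightarrow v\in U_{s_i,s_j}$, which I would verify by noting that the path $v_\id\to v_v$ then uses only edges of the standard face $U_{s_i,s_j}$. Once the duality and these two facts are in place, the characterization and hence the lemma follow formally.
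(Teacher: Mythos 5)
Your proposal is correct, but it takes a genuinely different route from the paper's. The paper proves the positive half by enlarging $R$ to $R'=R\cup\left(\spange\{\beta_1,\beta_2\}\cap\Phi^+\right)$ (and likewise for $H_1,H_2$) and invoking the case analysis inside the proof of \cref{same-endpoint} to see that $H$ weakly separates $R'$, hence $R$; it proves exactness by a short planar argument: any tangent weakly separating line must touch $\partial\DD$ on the arc cut out by the two tangent lines through $\wh\alpha$ (which are precisely $\wh H_1,\wh H_2$), and a tangency point lying strictly between the tangency point $p_i$ of $\wh H_i$ and the tangency point $q$ of $\wh H$ would separate $\wh\beta_i$ from $\DD$, which is forbidden since $\beta_i\in B$. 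You instead classify \emph{all} tangent weakly separating hyperplanes at once: the pole--polar duality (your sign computations $\langle\gamma,O\rangle=-\tfrac13\height{\gamma}<0$ and $\langle p',O\rangle<0$ are correct), the Farey/non-Farey dichotomy for the tangency point, the identity $\{\gamma\in\Phi^+:\langle\gamma,p'\rangle>0\}=\Inv(\yy(f'))$, and the coset criterion $w\in\yy(f')U_{s_i,s_j}$, so that the admissible tangency points correspond exactly to the three faces incident to $v_w$. What your route buys: it bypasses the snake machinery entirely (the paper's positive half leans on the \emph{proof}, not just the statement, of \cref{same-endpoint}), and it yields a sharper structural fact of independent interest---the tangent weakly separating hyperplanes for $\Inv(w)$ are in bijection with the rank $2$ parabolic cosets containing $w$. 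What it costs: you must actually prove the two steps you flag, namely that $v_u\to p'$ along the coset so that $\{\gamma:\langle\gamma,p'\rangle>0\}=\Inv(\yy(f'))$ (this follows from length-additivity of the coset decomposition, the fact that roots of $\Phi_{f'}$ lie on the tangent line, and a small geometric argument that the triangles fanning around $p'$ shrink to $p'$), and that non-Farey tangency points support no roots and infinitely many strictly separated roots; both are doable exactly as you sketch, and your secondary point that $\Inv(v)\subseteq\Phi_{ij}^+$ forces $v\in U_{s_i,s_j}$ is a routine induction on the first letter of a reduced word. So the plan is sound as written, just longer than the paper's argument.
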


\begin{proof}
    Consider the set $R'=R\cup\spange{\{\beta_1,\beta_2\}}$.
    The roots in $R$ are the labels on edges from $v_\id$ to $v_w$, and the remaining roots of $R'$ are the labels on edges incident to the region $f$ where $\yy(f)=w$.
    From the proof of \cref{same-endpoint}, the hyperplane $H$ spanned by $\{\beta_1,\beta_2\}$ is a weakly separating hyperplane for $R'$.
    Since $R'\cap H=\overline{\{\beta_1,\beta_2\}}$, we deduce that $H$ is a weakly separating hyperplane for $R$ and $R\cap H=\varnothing$. The argument for $H_1$ and $H_2$ is similar, using $R'=R\cup\spange\{\beta_1,\alpha\}$ and $R'=R\cup\spange\{\beta_2,\alpha\}$, respectively.

    The lines $\wh H_1, \wh H_2$ are tangent to $\partial\DD$; call the intersection points $p_1$ and $p_2$, respectively.
    We observe that $\wh{\alpha}$ is separated from $\DD$ by a tangent line $\ell_p,\ p\in\partial \DD$ if and only if $p$ is in the triangle $\conv(\wh\alpha, p_1,p_2)$.
    Hence, any weakly separating line for $\wh{R}$ that is tangent to $\partial \DD$ must meet $\partial \DD$ inside this triangle.
    To ensure $\wh{\alpha}$ is not on $\ell_p$, the point $p$ must be strictly between $p_1,p_2$.
    Let $q\in\partial\DD$ be the point where $\wh{H}$ meets $\partial \DD$.
    Then for each $i\in\{1,2\}$, for any $p$ along the arc strictly between $p_i,q$, the line $\ell_p$ separates $\wh{\beta}_i$ and $\DD$.
    Hence, none of these lines are weakly separating lines for $\wh{R}$. The remaining tangent lines intersecting $\partial \DD$ in the triangle are $\wh H, \ell_{p_1}=\wh H_1,$ and $\ell_{p_2}=\wh H_2$.
\end{proof}

\begin{lemma}\label{lem:inf_weak_hyp}
    If $R$ is neither finite nor cofinite, then the weakly separating hyperplane for $R$ is unique.
\end{lemma}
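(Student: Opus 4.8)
The plan is to prove uniqueness by showing that \emph{any} weakly separating hyperplane $H$ for $R$ must coincide with $H_\snake$, using the accumulation structure of rescaled roots on $\partial\DD$. Throughout I work with the line $\wh H = H\cap\AA$. Since $R$ and $B$ are both nonempty and every rescaled root lies in $\AA\setminus\DD$ (hence strictly on the side $\{x+y+z>0\}$ of $\AA_0$), no weakly separating hyperplane can equal $\AA_0$; for every other hyperplane the assignment $H\mapsto\wh H$ is a bijection onto the lines of $\AA$, so it suffices to prove $\wh H=\wh H_\snake$. Existence of a weakly separating hyperplane is already guaranteed by \Cref{thm:main}, with $H_\snake$ exhibited in \Cref{lem:distinct_separable,same-endpoint}, so only uniqueness remains.

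First I record the accumulation structure. As $R$ is infinite, $\wh R$ is an infinite subset of the compact set $\conv(\Delta)$ and hence has accumulation points, all of which lie on $\partial\DD$ by \Cref{lem:limitpoints}; write $A_R,A_B\subseteq\partial\DD$ for the accumulation sets of $\wh R,\wh B$. By \Cref{lem:snakerootsboundary} together with \Cref{lem:limitpoints}, every point $q\in\partial\DD\setminus\{p_1,p_2\}$ is an accumulation point of rescaled roots all of the single color of $q$; consequently $A_R\cup A_B=\partial\DD$ and $A_R\cap A_B\subseteq\{p_1,p_2\}$. Now let $H$ be any weakly separating hyperplane, with $\wh R$ in one closed half-plane $\wh H^+$ of $\wh H$ and $\wh B$ in the other closed half-plane $\wh H^-$. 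Passing to closures gives $A_R\subseteq\wh H^+$ and $A_B\subseteq\wh H^-$, whence $A_R\cap A_B\subseteq \wh H^+\cap\wh H^-=\wh H$.

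I then split on whether $p_1=p_2$. If $p_1\neq p_2$, the pair of snakes is a curve limiting to the two distinct points $p_1,p_2$, so it cuts $\partial\DD$ into two nontrivial arcs, one entirely red and one entirely blue; thus $A_R$ and $A_B$ are the two corresponding closed arcs and $A_R\cap A_B=\{p_1,p_2\}$. The inclusion $A_R\cap A_B\subseteq\wh H$ then forces $\{p_1,p_2\}\subseteq\wh H$, and since two distinct points determine a unique line, $\wh H=\wh H_\snake$. If instead $p_1=p_2=p$, then $\wh H_\snake$ is tangent to $\partial\DD$ at $p$. The pair of snakes together with $p$ is a closed curve meeting $\partial\DD$ only at $p$, so the connected set $\partial\DD\setminus\{p\}$ lies in a single complementary region and is monochromatic; swapping the roles of $R$ and $B$ if necessary, assume it is blue. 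Then $A_B=\partial\DD$ and $A_R=\{p\}$ (nonempty since $R$ is infinite). From $A_B=\partial\DD\subseteq\wh H^-$ we conclude that all of $\overline\DD$ lies on one closed side of $\wh H$, so $\wh H$ is tangent to or disjoint from $\overline\DD$; and from $p\in A_R\subseteq\wh H^+$ together with $p\in\partial\DD\subseteq\wh H^-$ we get $p\in\wh H$. A line through a boundary point of $\DD$ with $\overline\DD$ on one side is the tangent line at that point, so again $\wh H=\wh H_\snake$.

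The main obstacle is the boundary-accumulation bookkeeping: verifying $A_R\cap A_B\subseteq\{p_1,p_2\}$ and that $\partial\DD\setminus\{p\}$ is monochromatic when $p_1=p_2$. Once these are in hand, the separation argument pinning $\wh H$ to pass through the limit point(s) is short. A secondary point to handle carefully is that in the tangent case—especially the rank~$2$ parabolic subcase of \Cref{same-point-description}—the line $\wh H_\snake$ may contain infinitely many rescaled roots; this is harmless, since weak separation permits roots to lie on the hyperplane and does not affect the computation of $A_R$ and $A_B$.
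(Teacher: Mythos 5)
Your proof is correct, and it takes a genuinely different route from the paper's. The paper argues by contradiction between two arbitrary distinct weakly separating hyperplanes $H$ and $H'$: it first uses \cref{lem:limitpoints} to show both lines $\wh H,\wh H'$ must meet $\DD$ (else one color class would be finite), picks a point $q\in\wh H\cap\partial\DD\setminus\wh H'$, uses the separation property of $H'$ to produce a monochromatic neighborhood of $q$, then deduces from the density of rescaled roots on $\partial\DD$ that $\wh H$ must be \emph{tangent} to $\partial\DD$ at $q$ with $R$ weakly on the side away from the disk, whence $R$ is finite --- a contradiction. Notably, the paper's argument never mentions $p_1$, $p_2$, or $H_\snake$. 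Your proof instead identifies every weakly separating hyperplane with $H_\snake$ outright, by computing the accumulation sets $A_R$, $A_B$ on $\partial\DD$ (via \cref{lem:snakerootsboundary} and \cref{lem:limitpoints}) and case-splitting on whether $p_1=p_2$. Both arguments rest on the same two ingredients, but yours buys the explicit identification $H=H_\snake$ (slightly more informative than bare uniqueness) and replaces the paper's somewhat delicate density-forces-tangency step with a soft convexity argument ($\partial\DD\subseteq\wh H^-$ gives $\overline{\DD}\subseteq\wh H^-$, so a line through $p$ supporting the disk is the tangent at $p$); the paper's version buys freedom from the case analysis on the snakes' limit points and from the Jordan-curve bookkeeping about monochromatic boundary arcs, since it compares two hyperplanes directly. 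One small point worth making explicit if your version were adopted: in the case $p_1\neq p_2$, the claim that the two arcs of $\partial\DD\setminus\{p_1,p_2\}$ receive \emph{different} colors uses that the pair of snakes, as an arc from $p_1$ to $p_2$ through the interior of $\DD$, separates the two boundary arcs --- this is implicit in the paper's definition of the red and blue sides of $\wh H_\snake$, but it is doing real work in your argument.
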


\begin{proof}
    Suppose to the contrary that there are distinct weakly separating hyperplanes $H,H'$ for $R$.
    Using \cref{lem:limitpoints}, for any $\epsilon>0$ there are a finite number of roots $\alpha$ such that the distance between $\wh{\alpha}$ and $\DD$ is at least $\epsilon$.
    Hence, $\wh{H}$ and $\wh{H}'$ both intersect the disk $\DD$.
    Since the lines are distinct, they do not have the same set of intersection points with $\partial \DD$.
    Suppose $q\in \wh{H}\cap\partial \DD \setminus \wh{H}'$.
    By applying \cref{lem:snakerootsboundary} to $H'$, there is a neighborhood of $q$ in which either every rescaled root is in $\wh{R}$ or every rescaled root is not in $\wh{R}$.
    Without loss of generality, we assume the latter case.
    Since every rescaled root in some neighborhood of $q$ is weakly on the same side of $\wh{H}$ and $q\in \wh{H}$, the density of rescaled roots on $\partial \DD$ (\cref{lem:limitpoints}) implies that $\wh{H}$ is tangent to $\partial \DD$ at $q$. Moreover, $R\setminus H$ is in the half-space $\cH$ defined by $H$ not containing $\DD$.
    But then $R\cap H$ and $R\cap \cH$ are finite, which implies $R$ is finite, a contradiction.
\end{proof}

For a biclosed set $R$ which is neither finite nor cofinite, we label the unique weakly separating hyperplane furnished by \Cref{lem:inf_weak_hyp} as $H_R$. 
Let $\cH_R$ denote the component of
$V\setminus H_R$ that contains elements of $R$. 
There may or may not be roots contained in $H_R$; let $\Phi_R$ denote the (possibly empty) root subsystem $H_R\cap \Phi$. 
Then $R$ is determined by the half-space $\cH_R$ along with the set $\partial R\coloneqq R\cap\Phi_R$. 
Conversely, each hyperplane $H\subset V$ such that $\wh H\cap
\partial\DD\neq\varnothing$ is the weakly separating hyperplane for
some biclosed set. Choosing a biclosed set with $H$ as a weakly separating
hyperplane is equivalent to choosing a half-space in $V\setminus H$
and a biclosed subset of $H\cap \Phi^+$. For a finite or cofinite biclosed set there are three possible such representations by \Cref{lem:finite_weak_hyp}; otherwise, there is a unique such representation. 

To summarize, the following is a parametrization of the biclosed sets in $\Phi^+$.
\begin{itemize}
\item For each $w\in U_3$, there is a finite biclosed set $R=\Inv(w)$. 
\item For each point $p\in \partial\DD$, let $\wh H$ be the tangent
  line through $p$ in $\AA$, and let $H$ be the linear hyperplane in $V$
  containing $\wh H$. For each of the two half-spaces $\cH$ in
  $V\setminus H$, and for each biclosed subset $\partial R$ of $H\cap \Phi^+$, there is a biclosed set $R=(\cH\cap \Phi^+) \sqcup
  \partial R$. If $H\cap \Phi^+$ has rank 2 and $\cH$ is the half-space so that $\cH\cap \DD=\varnothing$ (equivalently, $\cH\cap \Phi^+$ is finite), then we require that $\partial R $ is infinite. If $H\cap \Phi^+$ has rank 2 and $\cH$ is the half-space so that $\cH\cap \DD\neq \varnothing$ (equivalently, $\Phi^+\setminus (H\cup\cH)$ is finite), then we require that $\partial R $ is not cofinite.   
\item For each pair of distinct points $p,q\in \partial\DD$, let $\wh
  H$ be the line through $p$ and $q$ in $\AA$, and let $H$ be the linear
  hyperplane in $V$ containing $\wh H$. For each of the two
  half-spaces $\cH$ in $V\setminus H$, and for each biclosed subset
  $\partial R$ of $H\cap \Phi^+$, there is a biclosed set
  $R=(\cH\cap \Phi^+) \sqcup \partial R$.
\item For each $w\in U_3$, there is a cofinite biclosed set
 $R=\Phi^+\setminus\Inv(w)$.
\end{itemize}

\subsection{Cover Relations} 
\label{coverrelations}

We note the following description of cover relations in the extended
weak order of $U_3$. 

\begin{proposition}
  A containment $R_1\subset R_2$ of biclosed sets is a cover relation
  in the extended weak order of $U_3$ if and only if $|R_2\setminus R_1|=1$.
\end{proposition}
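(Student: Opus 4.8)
The forward implication is immediate: if $|R_2\setminus R_1|=1$, then there is no set whatsoever strictly between $R_1$ and $R_2$, so the containment is a cover. For the reverse implication I will prove the contrapositive. Assume $R_1\subsetneq R_2$ are biclosed with $|R_2\setminus R_1|\geq 2$, write $D=R_2\setminus R_1$ and $B_i=\Phi^+\setminus R_i$, and produce a biclosed set $S$ with $R_1\subsetneq S\subsetneq R_2$.

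My first step is to understand which single roots can be adjoined to $R_1$. Since a set is biclosed exactly when its intersection with each rank~$2$ linear subsystem $\Phi'$ is a single cut of the roots of $\Phi'^+$ linearly ordered along the line $\wh H$ of \cref{lem:rank2geom}, a root $\beta\in B_1$ has $R_1\cup\{\beta\}$ biclosed if and only if, in every rank~$2$ subsystem $\Phi'\ni\beta$, the root $\beta$ sits immediately next to the cut $R_1\cap\Phi'$ on the $B_1$ side. Consequently, if $R_1\cup\{\beta\}$ fails to be biclosed, then there is a witness root that is forced to lie in $D$: either some $\gamma\in\spange\{\alpha,\beta\}\setminus\{\alpha,\beta\}$ with $\alpha\in R_1$ (a closure failure, whence $\gamma\in R_2\setminus R_1=D$ since $R_2$ is closed), or $\beta\in\spange\{\gamma,\gamma'\}$ with $\gamma,\gamma'\in B_1$ (a co‑closure failure, where the closedness of $B_2$ forces at least one of $\gamma,\gamma'$ into $B_1\setminus B_2=D$). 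Thus it suffices to find a single $\beta\in D$ admitting no such witness; then $R_1\cup\{\beta\}$ is the desired $S$, as $|D|\geq 2$ guarantees $R_1\cup\{\beta\}\subsetneq R_2$.

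To locate such a $\beta$ I would split on the weakly separating hyperplanes furnished by \cref{thm:main}. If $R_1$ and $R_2$ admit a common weakly separating hyperplane $H$, then by the parametrization of \cref{sec:structure} they share the half-space and differ only in their boundary data $R_i\cap\Phi_H$, which are cuts of the rank $\leq 2$ system $\Phi_H$; moving this cut by one step yields an intermediate biclosed $S$, exactly the rank~$2$ situation in which covers add a single root. If $R_1$ and $R_2$ have no common weakly separating hyperplane, then their separating lines are distinct, and I would choose a line $\wh{H'}$ meeting $\partial\DD$ strictly between them. Because $\wh R_1$ and $\wh B_2$ are separated by both lines, they remain on opposite sides of $\wh{H'}$, while $\wh D$, being sandwiched between the two lines, is split by $\wh{H'}$; taking $S$ to consist of the roots on the $R_1$ side then gives $R_1\subsetneq S\subsetneq R_2$.

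The main obstacle is this last construction: ensuring that the intermediate line $\wh{H'}$ can be chosen so that it genuinely separates two roots of $D$ and so that the resulting $S$ satisfies the side conditions of the parametrization (so that it is biclosed on the nose, not merely separated). Here I would use that rescaled roots accumulate only on $\partial\DD$ (\cref{lem:limitpoints}), so that $\wh D$ has only finitely many points outside any neighborhood of the boundary and its splitting can be arranged at a boundary point lying between the two relevant arcs; uniqueness of the weakly separating hyperplane in the infinite, non‑cofinite case (\cref{lem:inf_weak_hyp}) rules out the degenerate possibility that every admissible line fails to split $\wh D$. The finite and cofinite cases are easier and handled separately: there $R_1$ or $R_2$ is an inversion set by \cref{prop:finite_snake}, and intermediate inversion sets are supplied by the weak order, using that the only finite biclosed sets are inversion sets \cite[Lemma~4.1]{dyer:2019weak}.
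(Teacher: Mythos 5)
Your overall strategy---interpolate between the weakly separating hyperplanes of $R_1$ and $R_2$ guaranteed by \cref{thm:main}---is the same idea that drives the paper's proof, but your execution of the key step has a genuine gap. In the case where $R_1$ and $R_2$ have distinct separating lines $\wh H_1,\wh H_2$, you assert that any line $\wh{H'}$ meeting $\partial\DD$ strictly between them keeps $\wh R_1$ and $\wh B_2$ on opposite sides. This is false as stated. The correct picture is this: letting $x$ be the intersection point of $\wh H_1$ and $\wh H_2$ (which lies outside $\DD^{\circ}$ since the chords' arcs are nested), the set $\wh R_1$ (off the lines) lies in the sector at $x$ that is on the $R$-side of both lines, $\wh B_2$ lies in the opposite sector, and $\wh D$ lies in one of the two remaining sectors. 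A line $\wh{H'}$ that does not pass through $x$ necessarily cuts into the sector containing $\wh R_1$ or the one containing $\wh B_2$, and nothing prevents finitely many rescaled roots of $R_1$ (or $B_2$) from sitting in the severed piece near $x$; \cref{lem:limitpoints} only controls accumulation at $\partial\DD$, not the scattered roots elsewhere, and even one stray root breaks the containment $R_1\subseteq S$. The fix is to take $\wh{H'}$ from the pencil of lines through $x$, i.e., to use a convex combination of the two defining functionals---and this is exactly what the paper does: it chooses strict separating functionals $\chi_1,\chi_2$ valued in a non-archimedean ordered field (so that roots lying on real separating hyperplanes cause no trouble), sets $\chi_\alpha=\lambda_\alpha\chi_1+(1-\lambda_\alpha)\chi_2$ to vanish at a root $\alpha\in R_2\setminus R_1$, does the same for $\beta$, and compares $\lambda_\alpha$ with $\lambda_\beta$; the degenerate case $\lambda_\alpha=\lambda_\beta$ is handled by analyzing the rank of the kernel subsystem. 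Your deferral of the ``side conditions'' to the parametrization is precisely where these boundary-root issues live, and they are not routine: they are the reason the paper works over a non-archimedean extension rather than with real lines.

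Two smaller points. First, your opening reduction---find $\beta\in D$ such that $R_1\cup\{\beta\}$ is biclosed---cannot succeed in general: if the unique separating hyperplane $H_{R_1}$ of an infinite, non-cofinite $R_1$ contains no roots, then $R_1$ has no upper covers at all (see the paper's list of cover relations), even though intervals above $R_1$ are plentiful; in any case your Case 2 abandons this reduction, since the $S$ produced there is not of the form $R_1\cup\{\beta\}$. Second, your finite/cofinite case is essentially right but needs the (standard, unproved in your sketch) fact that $\Inv(w)\subsetneq R_2$ forces $\Inv(ws)\subseteq R_2$ for some $s$ with $\ell(ws)=\ell(w)+1$; this follows by applying \cref{cor:bicsimple} to the biclosed set $w^{-1}\cdot(R_2\setminus\Inv(w))$, and is worth saying, though the paper's uniform argument makes the case split unnecessary.
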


\begin{proof}
  Let $R_1\subset R_2$. Evidently, if $|R_2\setminus R_1|=1$, then
  $R_1\subset R_2$ is a cover relation. To see the converse, assume
  that $|R_2\setminus R_1|\geq 2$. Let $\alpha,\beta \in R_2\setminus
  R_1$ be distinct roots.  Let $K$ be a non-archimedean ordered field
  containing $\RR$ (meaning that there exists an element of $K$
  greater than every natural number). By \cite[Lemma
    2.1.7]{BarkleyThesis}, for any weakly separable set $R\subseteq
  \Phi^+$, there is an $\RR$-linear function $\chi\in
  \mathrm{Hom}_{\RR}(V,K)$ such that $\chi(R) < 0$ and $\chi(\Phi^+\setminus
  R) > 0$. Let $\chi_1,\chi_2$ be such separating functions for $R_1$ and
  $R_2$, respectively. There exists a unique $\lambda_\alpha\in K$
  such that $\lambda_\alpha>0$ and $\lambda_\alpha \chi_1(\alpha) +
  (1-\lambda_\alpha)\chi_2(\alpha)=0$. Set $\chi_\alpha \coloneqq
  \lambda_\alpha \chi_1 + (1-\lambda_\alpha) \chi_2$. Similarly define
  $\lambda_\beta$ and $\chi_\beta$. If $\lambda_\alpha < \lambda_\beta$,
  then $\{\gamma\in \Phi^+ \mid \chi_\alpha(\gamma)<0\}$ is a biclosed
  set between $R_1$ and $R_2$ that contains $\alpha$ and not
  $\beta$. If instead $\lambda_\alpha > \lambda_\beta$, then
  $\{\gamma\in \Phi^+ \mid \chi_\beta(\gamma)<0\}$ is a biclosed set
  between $R_1$ and $R_2$ that contains $\beta$ and not $\alpha$. The
  remaining case is when $\lambda_\alpha=\lambda_\beta$. If one
  chooses $\chi_1$ and $\chi_2$ generically, then this case does not
  happen. Alternatively, one can allow this case to happen. Then
  consider the root subsystem $\Phi' = \{\gamma\in \Phi \mid
  \chi_\alpha(\gamma)=0\}$. If $\Phi'$ has rank 3, then $\chi_1$ and $\chi_2$
  are $K$-multiples of one another; this implies that
  $R_1=\varnothing$ and $R_2=\Phi^+$, so any finite biclosed set
  containing exactly one of $\{\alpha,\beta\}$ will be between $R_1$
  and $R_2$. Otherwise, $\Phi'$ is the rank 2 subsystem spanned by
  $\alpha,\beta$. Then there is a biclosed subset $R'$ of $(\Phi')^+$
  such that $\alpha\in R'$ and $\beta\not\in R'$. We find that $\{\gamma
  \in \Phi^+ \mid \chi_\alpha(\gamma) < 0\} \cup R'$ is a biclosed set
  between $R_1$ and $R_2$ that contains $\alpha$ and not $\beta$.
\end{proof}

Note that adding a single root preserves the property of being finite
or of being cofinite. Furthermore, adding a single root cannot change
$H_R$ for a biclosed set $R$ that is neither finite nor cofinite. As
a result, the following is a complete list of cover relations in
extended weak order:
\begin{itemize}
\item For each $w\in U_3$ and each $s\in S$ such that $\ell(ws) =
  \ell(w)+1$, there is a cover relation $\Inv(w) \subset \Inv(ws)$.
\item For each biclosed set $R$ that is neither finite nor cofinite
  such that $\Phi_R\neq \varnothing$, and for each $\alpha \in
  \Phi_R\setminus R$ such that $\partial R\cup\{\alpha\}$ is biclosed
  in $\Phi_R^+$, there is a cover relation $R\subset R\cup\{\alpha\}$.
\item For each $w\in U_3$ and each $s\in S$ such that $\ell(ws) =
  \ell(w)-1$, there is a cover relation $(\Phi^+\setminus\Inv(w)) \subset (\Phi^+\setminus\Inv(ws))$. 
\end{itemize}

\subsection{Finitely Generated Biclosed Sets and Complete Join-Irreducibles} 

\begin{figure}[]
  \begin{center}
    \includegraphics[height=70mm]{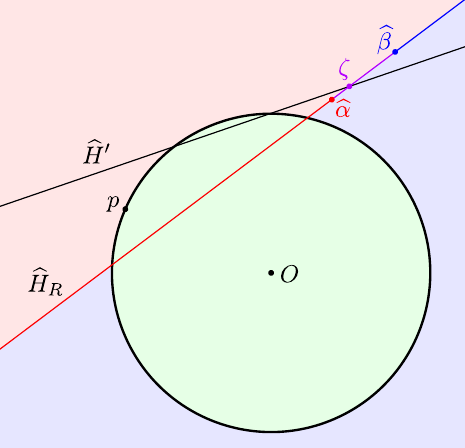}\qquad
    \includegraphics[height=70mm]{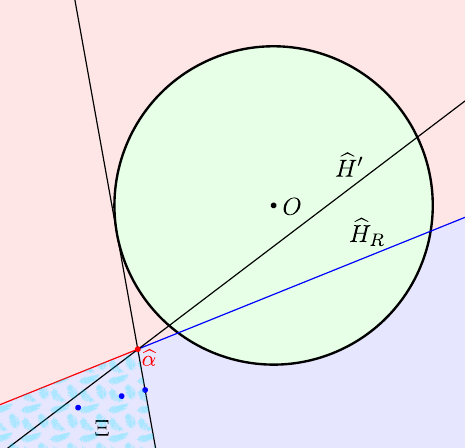}
  \end{center}
\caption{An illustration of the proof of \cref{prop:join-irr}. Red shading indicates areas where rescaled roots are in $\widehat R$, and blue shading indicates areas where rescaled roots are in $\wh\Phi^+\setminus\wh R$. }
\label{fig:JIsareJIs} 
\end{figure}

Say a biclosed set $R\subseteq\Phi^+$ is \dfn{finitely generated} if there exists a finite set $X\subseteq\Phi^+$ such that ${R=\Phi^+\cap\spange X}$. 
It follows from \cite[Lemma 1.7]{dyer:2019weak} that finite and cofinite biclosed sets are finitely generated. Now let
$R$ be a biclosed set that is neither finite nor cofinite. If
$\Phi_R$ has rank 2 and $\partial R$ is cofinite, then $R$ is finitely
generated. In every other case, $R$ is not finitely generated. Note,
however, that $R$ is always a join of finitely generated biclosed
sets. Since finitely generated biclosed sets are
(lattice-theoretically) compact, this implies that the extended weak
order of $U_3$ is an algebraic lattice (see \cite[Section~1.9]{grazter:1971lattice} for definitions of compact and algebraic). 

A biclosed set $R$ is called a \dfn{complete join-irreducible} if for
every join representation $R = \bigvee_{i\in I} R_i$, there exists
some $i\in I$ such that $R=R_i$. Equivalently, $R$ is a complete
join-irreducible if there exists a biclosed set $R_*\subsetneq R$ such
that every biclosed set $R'$ satisfying $R'\subsetneq R$ also satisfies $R'\subseteq R_*$.

\begin{proposition}\label{prop:join-irr}
The following is an exhaustive list of the
complete join-irreducibles: 
\begin{itemize}
\item For each non-identity element
  $w\in U_3$, the set $\Inv(w)$ is complete join-irreducible. 
	
\item For each point $p\in \partial\DD$, let $\wh H$ be the tangent
  line through $p$ in $\AA$, and let $H$ be the linear hyperplane in $V$
  containing $\wh H$. Let $\cH$ be the half-space in $V\setminus H$
  that contains no points of $\DD$. If $H\cap \Phi$ is a rank 2 subsystem, then for each proper cofinite
  biclosed subset $\partial R \subsetneq H\cap \Phi^+$, the biclosed
  set $(\cH\cap \Phi^+) \sqcup \partial R$ is a complete
  join-irreducible.
	
\item For each pair of distinct points $p,q\in \partial\DD$, let $\wh
  H$ be the line through $p$ and $q$ in $\AA$, and let $H$ be the linear
  hyperplane in $V$ containing $\wh H$. If $H\cap \Phi$ is a rank 2
  subsystem, then for each of the two half-spaces $\cH$ that is a
  component of $V\setminus H$, and for each proper cofinite biclosed
  subset $\partial R \subsetneq H\cap \Phi$, the biclosed set
  $(\cH\cap \Phi^+) \sqcup \partial R$ is a complete join-irreducible.
\end{itemize}
\end{proposition}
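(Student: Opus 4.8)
The plan is to use the equivalent formulation of complete join-irreducibility stated just above the proposition: $R$ is complete join-irreducible precisely when the collection of biclosed sets properly contained in $R$ has a maximum element $R_*$. The first step I would take is the reduction that \emph{every complete join-irreducible is finitely generated}. Indeed, if $R$ is not finitely generated, then since the extended weak order is an algebraic lattice, $R$ is the join of the finitely generated biclosed sets it contains; as none of these equals $R$, this realizes $R$ as a join of strictly smaller biclosed sets, so $R$ cannot be complete join-irreducible. By the classification of finitely generated biclosed sets recalled above, only three families then remain to examine: the finite sets $\Inv(w)$, the cofinite sets $\Phi^+\setminus\Inv(w)$, and the sets $R$ that are neither finite nor cofinite with $H_R$ of rank $2$ and $\partial R$ cofinite in $\Phi_R^+$.

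Next I would treat the two easy families. For a nonempty finite set $R=\Inv(w)$ with $w\neq\id$, every biclosed subset is finite, hence an inversion set $\Inv(v)$ with $v\le w$ in weak order; because $\Gamma$ is a tree and the weak order is its orientation away from $v_\id$, the elements $v\le w$ form a chain, so these subsets have a unique maximum $\Inv(ws)$ with $s$ the last letter of $w$, and $\Inv(w)$ is complete join-irreducible (the empty set is the bottom element and is correctly excluded). For a cofinite set $\Phi^+\setminus\Inv(w)$, the description of cover relations in \cref{coverrelations} exhibits at least two distinct lower covers $\Phi^+\setminus\Inv(ws)$, one for each generator $s$ with $\ell(ws)=\ell(w)+1$; their join is the whole set, so it is not join-irreducible.

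The heart of the argument will be the third family. Here $\partial R$ is a proper cofinite biclosed subset of the rank $2$ system $\Phi_R^+$, so along the line $\wh H_R$ it is an initial or final segment (in the order of \cref{parabolic-geometry}) whose finite nonempty complement sits at one end; let $\beta_m\in\partial R$ be the extreme element of $\partial R$ adjacent to that complement, and set $R_*=R\setminus\{\beta_m\}$, which is again biclosed with the same weakly separating hyperplane. I would claim $R_*$ is the maximum proper biclosed subset of $R$, for which it will suffice to show that any biclosed $R'\subseteq R$ with $\beta_m\in R'$ must equal $R$. First, $R'\cap\Phi_R^+$ is biclosed in $\Phi_R^+$, contains $\beta_m$, and lies in $\partial R$; since $\partial R$ is the only initial-or-final segment of $\Phi_R^+$ with this property, $R'\cap\Phi_R^+=\partial R$. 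In particular $R'$ is infinite, and since $\Phi^+\setminus R'$ contains the infinite set $\cH'\cap\Phi^+$ (with $\cH'$ the half-space opposite $\cH_R$), $R'$ is not cofinite, so by \cref{lem:inf_weak_hyp} it has a unique weakly separating hyperplane $H_{R'}$. The roots of $\partial R\subseteq R'$ accumulate at the limit point(s) of $\wh H_R$ on $\partial\DD$ by \cref{lem:limitpointsrank2}, while $\cH'\cap\Phi^+\subseteq\Phi^+\setminus R'$ accumulates along the complementary boundary arc; combined with the density of $\wh\Phi^+$ on $\partial\DD$ (\cref{lem:limitpoints}), this forces $\wh H_{R'}$ to meet $\partial\DD$ in exactly those limit points, so $H_{R'}=H_R$. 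Since $R'\subseteq R$ then pins the half-space down to $\cH_R$ and the boundary part to $\partial R$, we would conclude $R'=R$.

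It then remains to dispose of the finitely generated sets in the third family with $\partial R=\Phi_R^+$, i.e.\ $R=\overline{\cH_R}\cap\Phi^+$: such an $R$ has two distinct single-root lower covers, obtained by deleting either of the two relatively simple roots of $\Phi_R^+$, so it is not join-irreducible. The complete join-irreducibles are therefore exactly the nonempty $\Inv(w)$ together with the sets $(\cH_R\cap\Phi^+)\sqcup\partial R$ with $H_R$ of rank $2$ and $\partial R$ proper cofinite; sorting the latter according to whether $\wh H_R$ is tangent to $\partial\DD$ (one limit point $p$) or a secant (two limit points $p,q$) produces the three bullets, where in the tangent case the requirement that $\partial R$ be cofinite forces $\cH_R$ to be the half-space disjoint from $\DD$, and in the secant case either half-space occurs. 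The hard part will be the hyperplane-pinning step of the previous paragraph: to control an arbitrary biclosed subset $R'$ one must first trap its unique weakly separating hyperplane at the limit points of $H_R$, which is exactly where \cref{lem:inf_weak_hyp} and the boundary density of the roots are indispensable.
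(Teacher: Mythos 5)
Your proposal is correct, and it is worth separating its two halves. For the core direction---that the listed sets are complete join-irreducible---your argument coincides with the paper's: pin $R'\cap\Phi_R=\partial R$ by the uniqueness of rank-$2$ biclosed sets squeezed between $\beta_m$ and $\partial R$, then pin the unique weakly separating hyperplane of $R'$ (\cref{lem:inf_weak_hyp}) to equal $H_R$ using boundary density of rescaled roots (\cref{lem:limitpoints}, \cref{lem:limitpointsrank2}); the paper phrases this pinning as a contradiction that manufactures a root $\gamma$ strictly on the $R'$-side of the putative hyperplane but outside $\cH_R$, while you argue directly from accumulation points forcing $\wh H_{R'}$ through the limit points of $\wh H_R$, but the ingredients and conclusion are identical. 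Where you genuinely differ is exhaustiveness. The paper's hardest case---$R$ neither finite nor cofinite with $\partial R$ nonempty and finite, which does have a unique lower cover---is handled there by an explicit geometric construction (the region $\Xi$, the finiteness of the set of roots in it, and the new biclosed set $\{\alpha\}\cup(\cH'\cap\Phi^+)$). You dispose of all such sets at once via the reduction ``complete join-irreducible $\Rightarrow$ finitely generated,'' using algebraicity of the lattice and the classification of finitely generated biclosed sets. This buys real brevity and avoids the trickiest picture in the paper's proof, but it leans on two assertions the paper states without proof just before the proposition (that every biclosed set is the join of the finitely generated biclosed sets it contains, and that sets with $\partial R$ nonempty finite are not finitely generated), so a fully self-contained write-up would still owe proofs of these; the second, at least, is quick, since such an $R$ is not even convex-closed. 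Your handling of the remaining families (finite sets via the chain structure of weak order on $U_3$; cofinite sets and the case $\partial R=\Phi_R^+$ via two lower covers read off from \cref{coverrelations}) matches the paper up to cosmetic differences.
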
 

\begin{proof}
    We first show that the biclosed sets $R$ listed are each complete join-irreducible. Note that each $R$ in the list covers a unique element $R_*$ by \Cref{coverrelations}.  If $R$ is finite, then the interval $[\varnothing, R]$ in extended weak order is a finite poset, so the fact that $R$ covers a unique element implies it is a complete join-irreducible. Let $\alpha$ be the unique root in $R\setminus R_*$.  In general, we must check that for any biclosed $R'\subseteq R$, either $\alpha\not\in R'$ or $R'=R$. 
    
    So now assume $R$ is neither finite nor cofinite. Let $H_R, \cH_R, \Phi_R, \partial R$ be as above. Let $R'\subsetneq R$ be an arbitrary biclosed set. Then $R'\cap \Phi_R$ is a biclosed set in $\Phi_R^+$ that is contained in $\partial R$. By construction, the unique set $C$ that is biclosed in $\Phi_R^+$ with $\{\alpha\} \subseteq C \subseteq \partial R$ is $C=\partial R$. Hence, either $\alpha\not\in R' \cap \Phi_R$ or else $R'\cap \Phi_R = \partial R$. In the first case, we are done. In the second case, consider a weakly separating hyperplane $H'$ for $R'$. We will show that $H'=H_R$; together with the fact that $R'\cap \Phi_R=\partial R$, this will imply $R=R'$. 
    
    Assume for contradiction that $H'\neq H_R$. Then $\wh H'$ and $\wh H_R$ intersect between $\wh \alpha$ and $\wh \beta$, where $\beta\in \Phi_R^+$ is the unique root such that $\partial R \cup \{\beta\}$ is biclosed in $\Phi_R^+$. Call this intersection point $\zeta$; we may have $\zeta=\wh \alpha$ or $\zeta=\wh \beta$. Let $\cH'$ be the component of $V \setminus \cH'$ containing roots of $R'$. Since $\partial R\subseteq R'$ and $\partial R$ is cofinite, the intersection $\DD \cap \wh H_R$ must be contained in $\cH'$. (See the left side of \Cref{fig:JIsareJIs}.) Then there exists a point $p\in \cH'\cap\partial\DD \setminus (\cH_R\cup H_R)$. By \Cref{lem:limitpoints}, there are rescaled roots in any neighborhood of $p$; in particular, there is some root $\gamma \in \cH'\setminus \cH_R$, contradicting the fact that $R'\subseteq R$.
    This shows that $H'=H_R,$ so $R=R'$.

    Now we check that the remaining biclosed sets are not complete join-irreducible. Each cofinite biclosed set covers two or three others, so cofinite biclosed sets are not complete join-irreducibles. So we may assume $R$ is a biclosed set that is neither finite nor cofinite and is not listed above. Again define $H_R,\cH_R,\Phi_R,\partial R$ as above. For $R$ to have a unique lower cover, examination of \Cref{coverrelations} shows that we need $\partial R$ to be either proper and cofinite (in which case $R$ is in the list above) or nonempty and finite. So we must check that if $\partial R$ is nonempty and finite, and $R$ covers $R_*=R\setminus \{\alpha\}$, then there exists some $R'\subsetneq R$ such that $\alpha\in R'$. To do so, let $\ell$ be the tangent line to $\partial\DD$ passing through $\wh\alpha$ that intersects $\partial\DD$ in $\cH_R$. There is a unique closed  
    region $\Xi$ bounded by $\ell$ and $\wh H_R$ and containing no points of $\DD$ (filled with tiny light blue fern leaves on the right side of \Cref{fig:JIsareJIs}). The region $\Xi$ has compact intersection with $\conv(\wh\alpha_1,\wh\alpha_2,\wh\alpha_3)$ and (by \Cref{lem:limitpoints}) contains no accumulation points of $\wh\Phi^+$. Hence, there are finitely many rescaled roots in $\Xi$. It follows that there exists a line $\wh H'\neq \wh H_R$ passing through $\wh \alpha$, intersecting $\DD$, and bounding a half-space $\cH'$ such that $\cH'\cap \Xi=\varnothing$. In particular, we have the containment $\cH'\cap \Phi^+\subseteq R$. Then $R'=\{\alpha\}\cup(\cH'\cap \Phi^+)$ is a biclosed set with the property that $\alpha\in R'$ and $R'\subseteq R$. By an application of \Cref{lem:limitpoints} similar to above, $R'$ is a proper subset of $R$. Hence, $R$ is not a complete join-irreducible.
\end{proof}

Note that every complete join-irreducible is finitely generated. The
remaining finitely generated biclosed sets are \begin{itemize}
\item $\varnothing$,
\item biclosed sets $R$
such that $\Phi_R$ has rank 2 and $\partial R = \Phi_R$,
\item proper, cofinite biclosed sets, 
\item $\Phi^+$.
\end{itemize} 
In these
cases, the biclosed set is the join of zero, two, two, and three distinct complete
join-irreducibles, respectively.  

\section{The Join Operation}\label{sec:closure}

This section is devoted to proving \cref{thm:main2}.

The \dfn{convex closure} of a set $X\subseteq\Phi^+$ is the set $\Phi^+\cap\spange X$. The \dfn{2-closure} of $X$, denoted $\overline{X}$, is the unique inclusion-minimal closed subset of $\Phi^+$ containing $X$. In general, the 2-closure of $X$ is contained in the convex closure of $X$.

The next result, which follows immediately from \Cref{thm:main} and
\cite[Corollary 2.36]{LabbeThesis}, states that we can compute the join of a collection of biclosed sets in the extended weak order by taking the convex closure of the union of the collection. 

\begin{proposition}\label{prop:convexjoin}
  Let $\{R_i \}_{i\in I}$ be a collection of biclosed sets in
  $\Phi$. Then the join in extended weak order exists and is given by
	\[ \bigvee_{i\in I} R_i = \Phi^+\cap \spange\bigcup_{i\in I}R_i. \]
\end{proposition}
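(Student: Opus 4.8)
The plan is to exhibit the candidate set
\[
R \;:=\; \Phi^+\cap\spange\bigcup_{i\in I}R_i
\]
and show directly that it is the least upper bound of $\{R_i\}_{i\in I}$ in the extended weak order. First I would dispose of the two routine properties. The set $R$ contains every $R_i$, since $R_i\subseteq\Phi^+$ and $R_i\subseteq\spange\bigcup_j R_j$; and $R$ is closed, because if $\alpha,\beta\in R$ then $\spange\{\alpha,\beta\}\subseteq\spange\bigcup_i R_i$, whence $\spange\{\alpha,\beta\}\cap\Phi^+\subseteq R$. So the statement reduces to two claims: (a) $R$ is \emph{biclosed}, so that it is a genuine element of the extended weak order lying above each $R_i$; and (b) $R$ is contained in every biclosed upper bound of the family, so that it is the least one.

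The easy half is claim (b), and I would carry it out first because it works verbatim for an arbitrary index set $I$. Let $S$ be any biclosed set with $R_i\subseteq S$ for all $i$. By \Cref{thm:main}, $S$ is weakly separable, i.e.\ $\spange S\cap\spange(\Phi^+\setminus S)=\{0\}$. Take any $\gamma\in R$. Then $\gamma\in\spange\bigcup_i R_i\subseteq\spange S$, since $\bigcup_i R_i\subseteq S$. If $\gamma$ were not in $S$, then $\gamma$ would be a nonzero element of $\spange(\Phi^+\setminus S)$ as well, contradicting weak separability of $S$. Hence $\gamma\in S$, giving $R\subseteq S$. Note that this is exactly where \Cref{thm:main} is used in an essential way: it guarantees that an arbitrary biclosed upper bound, not merely a weakly separable one, has the cone-separation property needed here.

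The hard part will be claim (a): that the convex closure $R$ of a union of biclosed sets is again biclosed. Each $R_i$ is biclosed, hence weakly separable by \Cref{thm:main}, and the substantive geometric input is that the convex closure of a family of weakly separable subsets of a rank-$3$ root system is again weakly separable. This is precisely the content of Labbé's lattice theorem \cite[Lemma~2.33, Corollary~2.36]{LabbeThesis}, in which the join of weakly separable sets is realized by convex closure; it is also exactly the feature that can fail in rank $\ge 4$. Once $R$ is known to be weakly separable, the (elementary, prior) implication that weakly separable sets are biclosed yields (a). Combining (a) and (b) then gives $\bigvee_{i\in I}R_i=R=\Phi^+\cap\spange\bigcup_{i\in I}R_i$, and the join in particular exists. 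The one subtlety I would want to check carefully is the infinite index set: weak separability of $R$ depends only on the cone $\spange\bigcup_i R_i$ together with the cone generated by the complementary roots $\Phi^+\setminus R$, so I would confirm that the cited closure property applies to arbitrary unions (equivalently, invoke completeness of the lattice of weakly separable sets) rather than only to finite ones. This, rather than the argument in (b), is the genuine obstacle.
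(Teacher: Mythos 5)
Your proof is correct and takes essentially the same route as the paper: the paper deduces this proposition immediately from \cref{thm:main} together with Labb\'e's result that weakly separable sets form a lattice whose joins are given by convex closure, and these are exactly the two ingredients behind your claims (a) and (b). Your write-up merely makes the least-upper-bound verification explicit (and the infinite-index caveat you flag is handled as you suggest, by completeness/directed unions, the same reliance the paper makes on the cited result), so there is no substantive difference in approach.
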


In terms of rescaled roots in the affine patch
$\AA$, the join operation is given by
\[ \wh{\bigvee_{i \in I} R_i} = \wh\Phi^+ \cap
\conv\bigcup_{i\in I} \wh R_i. \] 

One might try to replace the convex closure in \cref{prop:convexjoin} with the weaker 2-closure. Our goal in this section is to prove \cref{thm:main2}, which states that one can indeed do this. 
This theorem strengthens \cref{prop:convexjoin} and settles a conjecture of Dyer in the special case of the Coxeter group~$U_3$. 
	
\begin{remark}
Given our use of parabolic biclosed sets, one might wonder if the
  join of a collection of biclosed sets is given by the minimal parabolic closed set containing
  their union. If this were so, then the join operation would have a
  simple description in terms of snakes. Unfortunately, this is not
  the case: the parabolic closure of a union of biclosed sets is often
  not biclosed.
\end{remark}

\begin{lemma}\label{lem:convextriangles}
  Suppose $X$ is a subset of $\mathbb{R}^2$. For each $a\in X$, we have 
  \[ \conv(X) = \bigcup_{x,y\in X}\conv\{a,x,y\}. \]
\end{lemma}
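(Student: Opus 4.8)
The plan is to prove the two inclusions separately. The inclusion $\bigcup_{x,y\in X}\conv\{a,x,y\}\subseteq\conv(X)$ is immediate: since $a,x,y\in X\subseteq\conv(X)$ and $\conv(X)$ is convex, each triangle $\conv\{a,x,y\}$ is contained in $\conv(X)$, and hence so is their union.

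For the reverse inclusion I would start from an arbitrary point $p\in\conv(X)$ and invoke Carath\'eodory's theorem in the plane to write $p$ as a convex combination of at most three points of $X$; that is, I fix $x_1,x_2,x_3\in X$ with $p\in T:=\conv\{x_1,x_2,x_3\}$. It then suffices to prove the purely four-point statement
\[ T\subseteq \conv\{a,x_1,x_2\}\cup\conv\{a,x_2,x_3\}\cup\conv\{a,x_3,x_1\}, \]
because $x_1,x_2,x_3\in X$ means each triangle on the right appears in the union indexed by $x,y\in X$.

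To establish this four-point inclusion I would argue with a ray emanating from $a$. If $p=a$ there is nothing to prove, so assume $p\neq a$ and consider the ray $t\mapsto a+t(p-a)$ for $t\geq 0$. Since $T$ is compact and convex, the set of parameters $t\geq 0$ with $a+t(p-a)\in T$ is a closed bounded interval $[t_0,t_1]$, and $t=1$ lies in it because $p\in T$; in particular $t_1\geq 1$ and $t_1<\infty$. The far endpoint $q:=a+t_1(p-a)$ therefore lies on the boundary $\partial T$, which is covered by the three edges $\conv\{x_i,x_j\}$, so $q\in\conv\{x_i,x_j\}$ for some pair $\{i,j\}$. As $0\leq 1\leq t_1$, the point $p$ lies on the segment from $a$ to $q$, whence $p\in\conv\{a,q\}\subseteq\conv\{a,x_i,x_j\}$, as required.

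The place I would be most careful is the uniformity across degenerate configurations: $a$ may lie inside, on, or outside $T$, and $x_1,x_2,x_3$ may be collinear so that $T$ is merely a segment or a point. The ray-exit argument is designed precisely to sidestep this case analysis, since it uses only the compactness and convexity of $T$ together with the fact that $\partial T$ is contained in the union of the three segments $\conv\{x_i,x_j\}$; this holds verbatim when $T$ is a genuine triangle (where $\partial T$ is the union of its edges) and also when $T$ is a segment or a point (where $T$ coincides with its boundary and is itself one such segment). The only substantive point to justify is that $q\in\partial T$, i.e.\ that $t_1$ is a finite exit parameter, which follows at once from the boundedness of $T$ and $p\neq a$.
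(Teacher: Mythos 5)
Your proof is correct and follows essentially the same route as the paper's: both apply Carath\'eodory's theorem to place the point in a triangle $\conv\{x_1,x_2,x_3\}$ with vertices in $X$, and then use the fact that the three triangles $\conv\{a,x_i,x_j\}$ cover that triangle. The only difference is that the paper states this covering as an observation, whereas you justify it with the ray-exit argument, which also handles the degenerate collinear configurations cleanly.
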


\begin{proof}
  Let $z\in \conv(X)$. 
  By Carath\'eodory's Theorem, there is a triangle $\conv\{w,x,y\}$ containing $z$ such that $w,x,y\in X$.
  Consider the set of four points $\{a,w,x,y\}$. 
  Observe that the union of the three triangles generated by $a$ and two of the vertices in $\{w,x,y\}$ covers the triangle $\conv\{w,x,y\}$. 
  Hence, $z$ is in one of these triangles.
\end{proof}

Before computing arbitrary joins in extended weak order, we first consider a special case.

\begin{lemma}\label{lem:2closure_join}
    If $\{R_i\}_{i\in I}$ is a collection of biclosed subsets of $\Phi^+$ such that $\bigcap_{i\in I} R_i$ contains a simple root, then the join of $\{R_i\}_{i\in I}$ in extended weak order is the 2-closure of its union. 
\end{lemma}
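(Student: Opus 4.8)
The plan is to reduce everything to a single triangle whose apex is the simple root, and then run an induction on height. By \cref{prop:convexjoin}, the join $\bigvee_{i\in I}R_i$ equals the convex closure $\Phi^+\cap\spange U$, where $U=\bigcup_{i\in I}R_i$. Since the $2$-closure $\overline U$ is always contained in the convex closure, it suffices to prove the reverse inclusion $\Phi^+\cap\spange U\subseteq\overline U$. Fix a simple root $\alpha\in\bigcap_{i\in I}R_i\subseteq U$, and note that $\wh\alpha=\alpha$ is a vertex of $\conv(\Delta)$. For $\gamma\in\Phi^+\cap\spange U$ we have $\wh\gamma\in\conv(\wh U)$, so \cref{lem:convextriangles}, applied with distinguished point $a=\alpha$, produces roots $x,y\in U$ with $\wh\gamma\in\conv\{\alpha,\wh x,\wh y\}$. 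Thus it is enough to prove the following triangle statement: for a simple root $\alpha$ and roots $x,y$, every root of $\Phi^+\cap\spange\{\alpha,x,y\}$ lies in $\overline{\{\alpha,x,y\}}$.

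I would prove this statement by induction on $\height{\gamma}$, which is well-founded by \cref{lem:heights}. The roots on the boundary of the triangle $T=\conv\{\alpha,\wh x,\wh y\}$ are handled directly: a root whose rescaling lies on the base $\overline{\wh x\wh y}$ belongs to $\spange\{x,y\}\cap\Phi^+=\overline{\{x,y\}}$, while a root on one of the two sides through $\alpha$ belongs to $\overline{\{\alpha,x\}}$ or $\overline{\{\alpha,y\}}$; all of these are contained in $\overline{\{\alpha,x,y\}}$. For $\gamma$ with $\wh\gamma$ interior to $T$, consider the rank $2$ linear subsystem $\Span\{\alpha,\gamma\}$. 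Because $\alpha$ has minimal height $1$ it is relatively simple there by \cref{lem:rank2geom}, and when $\langle\alpha,\gamma\rangle>0$ the reflection $r_\alpha(\gamma)=\gamma-2\langle\alpha,\gamma\rangle\alpha$ is a positive root of strictly smaller height with $\gamma\in\spange\{\alpha,r_\alpha(\gamma)\}$. (The case $\langle\alpha,\gamma\rangle\le 0$ is symmetric, reflecting instead toward the other relatively simple root of $\Span\{\alpha,\gamma\}$, which again has smaller height by \cref{lem:rank2geom}.) If the rescaled reflected root again lies in the cone under consideration, then the inductive hypothesis places it in $\overline{\{\alpha,x,y\}}$, and closing with $\alpha$ puts $\gamma$ there as well.

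The main obstacle is keeping the induction inside the relevant cone: the reflection sends $\gamma$ to a root on the ray from $\alpha$ through $\wh\gamma$, on the far side of $\wh\gamma$, and this ray may leave $T$ (indeed leave $\conv(\wh U)$) before reaching the reflected root, so that the inductive hypothesis does not immediately apply. The clean part of the resolution is the boundary case: if $\wh\gamma$ lies on $\partial\conv(\wh U)$, then because $\wh\gamma$ is a \emph{finite} convex combination of rescaled roots, all terms with positive weight must lie on a single supporting line of $\conv(\wh U)$, so $\gamma$ is squeezed between two roots of $U$ on a common rank $2$ subsystem and therefore already lies in $\overline U$. The delicate remaining point is to show that every interior $\gamma$ can be driven, by iterating the reflection at $\alpha$, either to a lower-height root still inside the cone or to a point on $\partial\conv(\wh U)$, so that the two cases above suffice. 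I expect the corner position of $\alpha$ in $\conv(\Delta)$, together with the fact that rescaled roots accumulate only on $\partial\DD$ (\cref{lem:limitpoints}, \cref{lem:limitpointsrank2}), to be exactly what forces this iteration to terminate; the degenerate situations where $\gamma$ lies in one of the two rank $2$ parabolic subsystems through $\alpha$ (\cref{prop:rank2_parabolics}) are handled separately, and verifying that this dichotomy is exhaustive with strictly decreasing height is the technical heart of the argument.
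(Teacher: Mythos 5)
Your first half tracks the paper exactly: reduce via \cref{prop:convexjoin} to showing that the convex closure of $U=\bigcup_{i\in I}R_i$ lies in $\overline{U}$, take a root $\gamma$ of minimal height in the difference (well-founded by \cref{lem:heights}), and use \cref{lem:convextriangles} with apex the common simple root to trap $\wh\gamma$ in a triangle $\conv\{\wh\alpha,\wh x,\wh y\}$ with $x,y\in U$. The divergence --- and the gap --- is your next move: reducing to the claim that every root of $\Phi^+\cap\spange\{\alpha,x,y\}$ lies in $\overline{\{\alpha,x,y\}}$. The paper never proves or uses this ``triangle statement,'' and the reduction discards exactly the data needed to finish. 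The paper instead remembers the biclosed sets $X,Y$ from the collection with $x\in X$, $y\in Y$, and invokes \cref{thm:main} to obtain weakly separating lines $L_X,L_Y$ for them. Its key maneuver, in precisely the situation you flag as the ``main obstacle,'' is this: the auxiliary relatively simple root produced by the rank-$2$ argument may escape the triangle, but the ray it lies on exits through the side of the triangle joining $\wh\alpha$ to $\wh y$ (two roots of $Y$), so the escaping root is weakly separated from $\wh\gamma$ by $L_Y$ and hence lies in the convex closure of $X\cup Y$, which sits inside the convex closure of $U$; the minimal-height hypothesis then applies to it, and one further 2-closure step recovers $\gamma$. With only the three roots $\alpha,x,y$ in hand, there is no separating line and no mechanism to capture the escaping root.

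Concretely, your induction step is unproved in the case $\langle\alpha,\gamma\rangle\le 0$: reflecting ``toward the other relatively simple root of $\Span\{\alpha,\gamma\}$'' requires that root to already be in the 2-closure, and it is exactly a root that can lie outside the triangle (the paper's Case 1, \cref{fig:triangles_case_1}); worse, if $\gamma$ is itself relatively simple in $\Span\{\alpha,\gamma\}$, that subsystem yields no height drop at all. The paper also needs a second, independent mechanism (its Case 2, when both relevant rays meet $\DD$): pass to the tangent line through $\wh\gamma$ corresponding to a rank $2$ \emph{parabolic} subsystem in which $\gamma$ is not relatively simple (\cref{prop:rank2_parabolics}), and again use $L_X,L_Y$ to show that the two relatively simple roots of that subsystem lie in the convex closure of $X\cup Y$ and hence, by minimality, in $\overline{U}$. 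Neither mechanism survives your reduction. Note finally that the triangle statement is strictly stronger than the lemma --- it asserts that 2-closure equals convex closure for an arbitrary triple containing a simple root, with no biclosedness hypothesis anywhere --- so even if it is true, proving it would demand a new argument rather than a simplification; as written, your proposal leaves open exactly the step where the paper's Theorem A does the work.
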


\begin{proof}
  Without loss of generality, we may assume that $\alpha_1\in\bigcap_{i\in I}R_i$.
  Let $\cU=\bigcup_{i\in I} R_i$.
  Let $\cZ=\Phi^+\cap\spange \,\cU$ so that $\cZ$ is the join of $\{R_i\}_{i\in I}$ in extended weak order by \cref{prop:convexjoin}. 
  Since convex closure is stronger than 2-closure, we know that $\cZ$ contains $\overline{\cU}$. 
  We wish to show these sets are equal.
  Assume to the contrary that $\cZ\setminus\overline{\cU}$ is nonempty, and choose a root $\gamma\in \cZ\setminus \overline{\cU}$ of minimum height.

  By \cref{lem:convextriangles}, there exist roots $\beta_1,\beta_2\in \cU$ such that $\wh{\gamma}$ is in the triangle $\conv\{\wh{\alpha}_1,\wh{\beta}_1,\wh{\beta}_2\}$. 
  Choose $X,Y\in \{R_i\}_{i\in I}$ such that $\beta_1\in X$ and $\beta_2\in Y$.  
  Since $\conv\{\wh{\alpha}_1,\wh{\beta}_1,\wh{\beta}_2\}$ is not contained in either of the convex sets $\wh{X},\wh{Y}$, we have $X\cap \{\alpha_1,\beta_1,\beta_2\} = \{\alpha_1,\beta_1\}$ and $Y\cap\{\alpha_1,\beta_1,\beta_2\}=\{\alpha_1,\beta_2\}$.
  Let $L_X,L_Y$ be weakly separating lines for $\wh{X},\wh{Y}$, respectively. 

 Consider the line $L_1$ through $\wh{\gamma}$ and $\wh{\beta}_1$.
 Divide this line into two rays $\rho_1,\rho_1^{\prime}$ meeting at $\wh{\gamma}$, and suppose $\wh{\beta}_1$ is in $\rho_1$. 
 By \cref{cor:rank2_subsystem}, the line $L_1$ must have a nontrivial intersection with $\DD$.
 Since $\DD$ is a convex set, $\DD\cap L_1$ is either a point or a line segment. 
 Moreover, the intersection $\DD\cap L_1$ is contained in either $\rho_1$ or $\rho_1^{\prime}$ since $\wh{\gamma}$ is not in $\DD$.

 {\bf Case 1:} Suppose that the intersection $\DD\cap L_1$ is contained in $\rho_1^{\prime}$, as shown in \cref{fig:triangles_case_1}. Let $\alpha$ be the relatively simple root of the rank~$2$ subsystem $\Phi'$ containing $\beta_1$ and $\gamma$ such that $\langle\alpha,\gamma\rangle<0$.
 By \cref{lem:rank2geom}, the rescaled roots $\wh{\alpha}$ and $\wh\gamma$ are on opposite sides of $\DD\cap L_1$. 
 By our assumption on $\DD\cap L_1$, this implies that $\wh{\alpha}\in \rho_1^{\prime}$.
 Since $\gamma$ is not relatively simple in $\Phi'$, $\height{\alpha}<\height{\gamma}$.

 The ray $\rho_1^{\prime}$ intersects the boundary of the triangle $\conv\{\wh{\alpha}_1,\wh{\beta}_1,\wh{\beta}_2\}$ at a point $p$ on the edge between $\wh{\alpha}_1$ and $\wh{\beta}_2$. 
 Since $L_Y$ weakly separates $\wh{\gamma}$ from the edge between $\wh{\alpha}_1$ and $\wh{\beta}_2$, it weakly separates $\wh{\gamma}$ from $p$. 
 Hence, $\wh{\alpha}$ is either in the triangle $\conv\{\wh{\alpha}_1,\wh{\beta}_1,\wh{\beta}_2\}$ or is weakly separated from $\wh{\gamma}$ by $L_Y$. 
 In either case, $\alpha$ is in the convex closure of $X\cup Y$. 
 By the minimality on the height of $\gamma$, we deduce that $\alpha$ is in $\overline{\cU}$. 
 Since $\wh{\gamma}$ is between $\wh{\beta}_1$ and $\wh{\alpha}$, we conclude that $\gamma$ is also in $\overline{\cU}$, which is a contradiction.

 {\bf Case 2:} Now suppose that $\DD\cap L_1$ is in $\rho_1$, as shown in \cref{fig:triangles_case_2}. Consider the line $L_2$ through $\wh{\gamma}$ and $\wh{\beta}_2$. 
 Divide this line into rays $\rho_2,\rho_2^{\prime}$ meeting at $\wh{\gamma}$ such that $\wh{\beta}_2$ is in $\rho_2$. 
 If $\DD\cap L_2$ is contained in $\rho_2^{\prime}$, then we reach a contradiction as in the previous case. 
 Hence, we may assume $\DD\cap L_2$ is contained in $\rho_2$.

 Let $M$ be one of the two tangent lines to $\partial\DD$ going through $\wh{\gamma}$. 
 If $M$ intersects the interior of the line segment between $\wh{\beta}_1$ and $\wh{\beta}_2$, then this tangent line separates $\rho_1$ and $\rho_2$. 
 But $\DD$ only appears on one side of $M$, so it cannot meet both $\rho_1$ and $\rho_2$, which is a contradiction. Hence, the line $M$ leaves the triangle $\conv\{\wh{\alpha}_1,\wh{\beta}_1,\wh{\beta}_2\}$ through its other two sides or its vertices. 
 By the same argument as in the previous case, the entire line $M$ intersected with $\wh{\Phi}^+$ is contained in the convex closure of $X\cup Y$. 
 We may choose $M$ to be the tangent line to $\partial\DD$ through $\wh{\gamma}$ that corresponds to a parabolic rank $2$ subsystem in which $\gamma$ is not simple by \cref{prop:rank2_parabolics}. 
 Thus, the extreme roots on $M$ are each of lower height than $\gamma$, so they are contained in $\overline{\cU}$ by the minimality hypothesis on $\gamma$. 
 But then the entire line $M$ intersected with $\Phi^+$ is contained in $\overline{\cU}$, which is a contradiction.
\end{proof}

\begin{figure}[]
  \begin{center}
    \includegraphics[height=97.938mm]{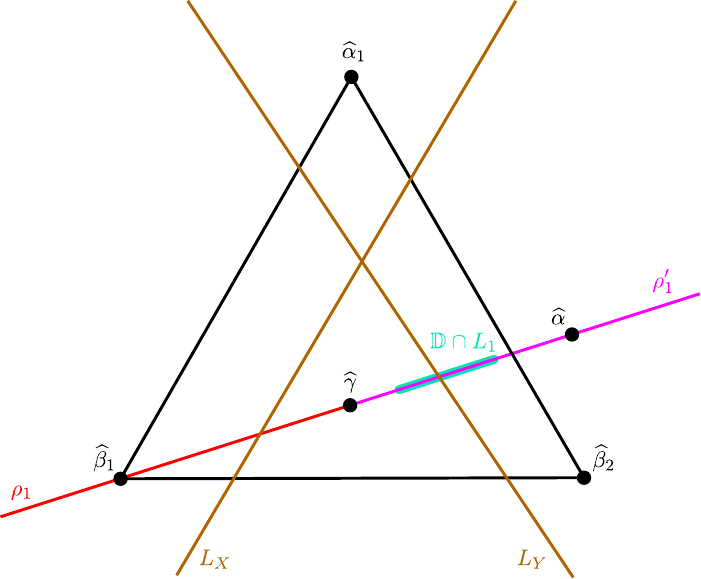}
  \end{center}
\caption{An illustration of the proof of \cref{lem:2closure_join} in the case where $\mathbb D\cap L_1$ is contained in the ray $\rho_1'$.}
\label{fig:triangles_case_1}  
\end{figure} 

\begin{figure}[]
  \begin{center}
    \includegraphics[height=97.938mm]{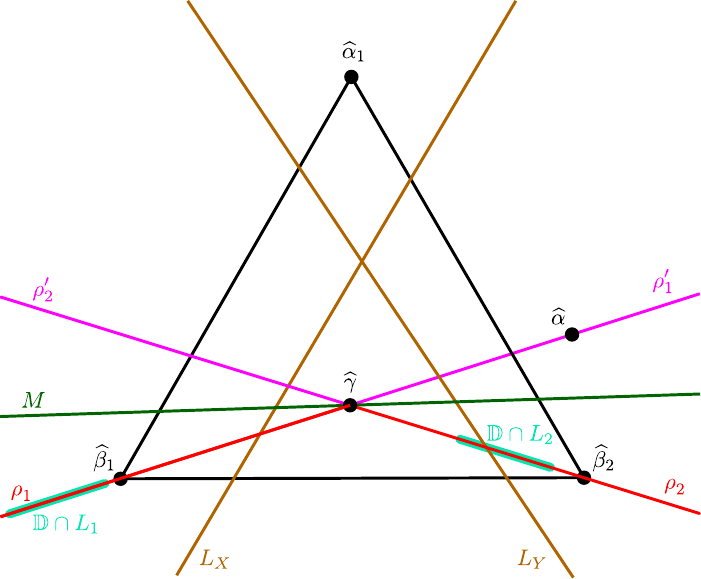}
  \end{center}
\caption{An illustration of the proof of \cref{lem:2closure_join} in the case where ${\mathbb D\cap L_1\subseteq \rho_1}$ and $\mathbb D\cap L_2\subseteq \rho_2$.} 
\label{fig:triangles_case_2}  
\end{figure} 

\begin{lemma}\label{lem:closeDelta}
    The parabolic 2-closure of $\Delta$ is $\Phi^+$.
\end{lemma}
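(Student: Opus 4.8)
The plan is to run a minimal-height counterexample argument, reducing everything to the observation that inside a single rank~$2$ parabolic subsystem the two relatively simple roots already generate its entire positive part under the closure operation. Write $C$ for the parabolic $2$-closure of $\Delta$, that is, the inclusion-minimal set containing $\Delta$ whose intersection with every rank~$2$ parabolic subsystem $\Phi'$ is closed in $\Phi'^+$; this is well defined because an intersection of sets closed in $\Phi'^+$ is again closed in $\Phi'^+$ and $\Phi^+$ is itself parabolic closed. Trivially $\Delta\subseteq C$. I would assume for contradiction that $C\neq\Phi^+$. By \cref{lem:heights} only finitely many roots have height below any fixed bound, so I may choose $\gamma\in\Phi^+\setminus C$ of minimum height; since $\Delta\subseteq C$, this $\gamma$ is not simple.

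Next I would locate a favorable subsystem. By \cref{prop:rank2_parabolics}, the non-simple root $\gamma$ is relatively simple in exactly one of the two rank~$2$ parabolic subsystems containing it; let $\Psi$ be the other one, so that $\gamma\in\Psi^+$ is \emph{not} relatively simple in $\Psi$. Let $a,b$ be the two relatively simple roots of $\Psi$. By \cref{lem:rank2geom}(3) both satisfy $\height{a}<\height{\gamma}$ and $\height{b}<\height{\gamma}$, so minimality of $\gamma$ forces $a,b\in C$. Since $C$ is parabolic closed, $C\cap\Psi$ is closed in $\Psi^+$ and contains both $a$ and $b$.

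The final step is the geometric heart of the argument. Because $a$ and $b$ are the relatively simple roots of $\Psi$ --- equivalently, the extreme vectors of the two-dimensional cone $\spange\Psi^+$ --- every positive root of $\Psi$ lies in $\spange\{a,b\}$. Concretely, \cref{parabolic-geometry} places the rescaled roots of $\Psi^+$ on a single line segment with $\wh{a}$ and $\wh{b}$ at its two endpoints, so $\spange\{a,b\}\cap\Psi^+=\Psi^+$. Closedness of $C\cap\Psi$ applied to the pair $\{a,b\}$ then yields $\Psi^+\subseteq C$, and in particular $\gamma\in C$, contradicting the choice of $\gamma$. Hence $C=\Phi^+$.

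I expect the only delicate point to be this last step: that a single application of the closure rule to the pair of relatively simple roots sweeps up the entire (infinite) positive part of $\Psi$. This is exactly where the affineness of rank~$2$ parabolic subsystems (\cref{prop:rank2_subsystems}) and the linear ordering of rescaled roots (\cref{parabolic-geometry}) do the real work; the surrounding scaffolding is a routine extremal argument powered by the discreteness of heights and the monotonicity recorded in \cref{lem:rank2geom}(3).
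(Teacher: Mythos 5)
Your proof is correct and follows essentially the same route as the paper's: a minimal-counterexample argument that invokes \cref{prop:rank2_parabolics} to find a rank~$2$ parabolic subsystem in which $\gamma$ fails to be relatively simple, then uses membership of the two relatively simple roots to sweep up all of $\Psi^+$ via closedness. The only difference is that you choose $\gamma$ of minimal \emph{height} (justified by \cref{lem:heights} and \cref{lem:rank2geom}(3)), whereas the paper chooses $\gamma$ of minimal \emph{depth}; both choices work, and yours is arguably slightly better documented by the paper's own lemmas.
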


\begin{proof}
    Suppose not, and choose a positive root $\gamma$ of minimum depth such that $\gamma$ is not in the parabolic 2-closure of $\Delta$.
    Since $\gamma\notin\Delta$ there is a rank 2 parabolic subsystem $\Phi'$ containing $\gamma$ for which $\gamma$ is not relatively simple by \cref{prop:rank2_parabolics}.
    The two simple roots $\beta_1,\beta_2$ of $\Phi'$ have lower depth than $\gamma$, so they are in the parabolic 2-closure of $\Delta$ by assumption.
    But then $\Phi'^+$ is in the parabolic 2-closure of $\Delta$, contradicting the assumption on $\gamma$.
\end{proof}

\begin{proof}[Proof of \cref{thm:main2}]
Let $\{R_i\}_{i\in I}$ be a collection of biclosed subsets of $\Phi^+$. We may discard the empty set from this collection without affecting the join or the 2-closure of the union; hence, we assume that all of the sets in the collection are nonempty.
 If $\cU=\bigcup_{i\in I}R_i$ contains all three simple roots, then $\overline{\cU} = \Phi^+$ by \cref{lem:closeDelta}, and $\Phi^+$ is the join of $\{R_i\}_{i\in I}$ by \cref{prop:convexjoin}. 
 Each nonempty biclosed set contains a simple root by \cref{cor:bicsimple}.
 If each set $R_i$ contains the same simple root, then \cref{lem:2closure_join} completes the proof.
 It remains to consider the case where $\cU$ contains exactly two simple roots, say $\alpha_1$ and $\alpha_2$.
 
 Let $\Phi_{12}$ be the standard parabolic subsystem generated by $\{\alpha_1,\alpha_2\}$. 
 Then $\Phi_{12}^+$ is a biclosed set that is contained in the 2-closure of $\cU$, so we have
 \[ \overline{\cU} = \overline{\cU\cup \Phi_{12}^+} = \overline{\bigcup_{i\in I} \overline{R_i\cup \Phi_{12}^+}}. \]
 Since each set $R_i$ shares a simple root with $\Phi_{12}^+$, we have $\overline{R_i\cup \Phi_{12}^+}=R_i\vee \Phi_{12}^+$ by \cref{lem:2closure_join}. 
 But then $\{R_i\vee \Phi_{12}^+\}_{i\in I}$ is a collection of biclosed sets that share a common simple root, so we can invoke \cref{lem:2closure_join} again to find that
 \[ \overline{\cU} = \overline{\bigcup_{i\in I}(R_i\vee \Phi_{12}^+)} = \bigvee_{i\in I} (R_i\vee \Phi_{12}^+) \supseteq \bigvee_{i\in I} R_i. \]
We already know by \cref{prop:convexjoin} that $\overline{\cU}\subseteq\bigvee_{i\in I}R_i$, so this completes the proof. 
\end{proof}

\section*{Acknowledgments}

This project grew out of discussions at a workshop on Lattice Theory
at the Banff International Research Station (BIRS) in Banff, Canada in
January 2025.  We are grateful to BIRS for providing a very
stimulating environment. We thank the
other participants of the workshop for their conversations, especially Oliver Pechenik and Nathan Reading. We are particularly grateful to Emily Barnard,
Cesar Ceballos, Osamu Iyama, and Nathan Williams for organizing the workshop.  And finally, we would like to thank the anonymous referee 
for their detailed and helpful comments.

\bibliographystyle{alpha}
\bibliography{biclosed_bib}

\end{document}